\newcommand{\N}{\mathbb{N}}
\newcommand{\Z}{\mathbb{Z}}
\newcommand{\R}{\mathbb{R}}
\newcommand{\A}{\mathbb{A}}
\newcommand{\D}{\mathbb{D}}
\newcommand{\Aa}{\check{\mathbb{A}}}
\newcommand{\s}{\mathbb{S}}
\newcommand{\tore}{\mathbb{T}}
\newcommand{\T}[1]{\tore^{#1}}
\newcommand{\abs}[1]{\left|#1\right|}
\newcommand{\fonc}[3]{#1:#2\to#3}
\newcommand{\sui}[1]{\left( #1_n \right)_{n\in\N}}
\newcommand{\suii}[2]{\left( #1_#2 \right)_{#2\in\N}}
\DeclareMathOperator{\rot}{Rot}
\DeclareMathOperator{\inte}{Int}
\DeclareMathOperator{\diam}{diam}
\DeclareMathOperator{\dom}{dom}
\DeclareMathOperator{\sing}{Sing}
\DeclareMathOperator{\fix}{Fix}
\theoremstyle{theorem}
\newtheorem{leem}{Lemma}[section]
\newtheorem{coro}[leem]{Corollary}
\newtheorem{prop}[leem]{Proposition}
\newtheorem{theo}[leem]{Theorem}
\newtheorem*{theoa}{Theorem A}
\newtheorem*{theob}{Theorem B}
\theoremstyle{remark}
\newtheorem{rema}{Remark}
\title{Existence of Non-Contractible Periodic Orbits for Homeomorphisms of the Open Annulus}
\author{\textsc{Jonathan Conejeros and F\'abio Armando Tal\footnote{The first author was supported by CNPq-Brasil. The second author was partially supported by Fapesp, CNPq-Brasil and CAPES.}}}
\date{}
\begin{document}
\maketitle

\begin{abstract}
  In this article we consider homeomorphisms of the open annulus $\A=\R/\Z\times \R$ which are isotopic to the identity and preserve a Borel probability measure of full support, focusing on the existence of non-contractible periodic orbits. Assume $f$ is such homeomorphism  such that the connected components of the set of fixed points of $f$ are all compact. Further assume that there exists $\check{f}$ a lift of $f$ to the universal covering of $\A$ such that the set of fixed points of $\check{f}$ is non-empty and that this set projects into an open topological disk of $\A$. We prove that, in this setting, one of the following two conditions must be satisfied: (1) $f$ has non-contractible periodic points of arbitrarily large periodic, or (2) for every compact set $K$ of $\A$ there exists a constant $M$ (depending on the compact set) such that, if $\check{z}$ and $\check{f}^n(\check{z})$ project on $K$, then their projections on the first coordinate have distance less or equal to $M$.
\end{abstract}

\section{Introduction}

H. Poincar\'e's  rotation number concept for circle homeomorphisms is one of the most celebrated and useful tools in dynamical systems theory, one that is familiar to almost all students and researchers in the field. Such success has lead to the generalization of the idea to several different contexts, where they have developed into important and useful tools. One of the first such generalization was the concept of rotation interval for endomorphisms of the circle, see \cite{palistakesn}. Given a continuous degree one map $f:\T{1}\to\T{1}$, where $\T{1}=\R/\Z$, and a lift $\check{f}:\R\to\R$, one can define the rotation set of $\check{f}$ as follows:
$$\rho(\check{f}):=\left\{\rho\, |  \,\exists\,\check{z}\in \R, \lim_{n\to +\infty} \frac{1}{n}(\check{f}^n(\check{z})-\check{z})=\rho  \right\},$$
and it can be shown that such set is always a closed interval.

As in the case of circle homeomorphisms, it is possible to show that $f$ has periodic points if and only if the rotation interval of $\check{f}$ contains a rational point and moreover, if $p/q\in\rho(\check{f})$, then there exists some $q$ periodic point $z$ with rotation number $p/q$, that is, there exists $\check{z}$ lifting $z$ such that $\check{f}^q(\check{z})= \check{z} + p$. Not only that, it can be shown that $\check f$  also presents {\it uniformly bounded deviations} from its rotation set, that is, there exists a positive constant $M$ such that, for all $\check z\in\R, n\in \Z$, it holds that $d(\check f^n(\check z)-\check z, n\rho(\check f))<M$. In particular, there exists a dichotomy between two different phenomena: A circle endomorphism can either have uniformly bounded deviations from a rigid rotation, or it must have a sequence of periodic points with arbitrarily large prime period and distinct rotation numbers.

As expected, there have been a large effort in trying to generalize the concept of rotation numbers for dynamical systems in dimension higher than one, trying to get suitable extensions of the $1$-dimensional results. But this task is not that simple, even for dimension 2, where the best attempts are only suitable for homeomorphisms in the homotopy class of the identity. The best known case here is the notion of rotation set of a torus homeomorphism, as introduced by M. Misiurewicz and K. Ziemian in \cite{mizi}. Given a homeomorphism $f:\T{2}\to\T{2}$ homotopic to the identity, where $\T{2}=\R^2/\Z^2$, and a lift $\check{f}:\R^2\to\R^2$ to its universal covering space and $\pi:\R^2\to\T{2}$ the covering projection, one can define the rotation set of $\check{f}$ as follows:
$$\rho(\check f):=\left\{v\, |\, \exists (n_k)_{k\in\N}\subset \N,\, \exists (\check{z}_k)_{k\in\N}\subset \R^2, \lim_{k\to +\infty} \frac{1}{n_k}(\check{f}^{n_k}(\check{z}_k)-\check{z}_k)= v \right\}.$$
Elements in $\rho(\check f)$ are called {\it rotation vectors} for $\check f$. Furthermore, if a point $z\in\T{2}$ is such that for some $\check z\in \pi^{-1}(z)$,  the limit $\lim_{n\to +\infty}\frac{\check f^n(\check z)-\check z}{n}=v$ exists, then $v$ is called the {\it rotation vector of $z$}.

The use of rotation sets as a tool for understanding and describing dynamical behavior in surfaces started in the 90s (see  \cite{franks1}, \cite{franks2}, \cite{mizi} and \cite{libreMcKay}), and has developed in a very active field in the last decade, with some significant contributions (see \cite{BoylandCarvalhoHall}, \cite{inventioneskt}). Lately, a couple of aspects of the subject have been drawing some increased attention. First, there have been several studies trying to determine how well does the rotation set capture the possible non-linear displacements. More specifically, under what conditions should one expect to get uniformly bounded deviations from the rotation set. To be precise, in the case where $f$ is a homeomorphism of the $2$-torus that is homotopic to the identity, and $\check f$ is a lift to the universal covering, when does it hold that there exists a positive constant $M$ such that, for all $\check z\in\R^2, n\in \N$, it holds that $d(\check f^n(\check z)-\check z, n\rho(\check f))<M$, as in the case of circle endomorphisms? It is known that this is not the case when the rotation set of $\check f$ is a singleton (see \cite{kocsardkkoro},\cite{ktexample}), even for area-preserving maps. On the other hand bounded deviations are present when the rotation set of $\check f$ has nonempty interior (see \cite{pablo}, \cite{salvador} and \cite{lct}). There has also been a large number of results with a similar flavor when the rotation set of $\check f$ is a non-degenerate line-segment, see for instance \cite{pablo}, \cite{kocsard} and \cite{koropasseguisambarino}.

A second direction, one that has drawn a particular interest due to connections to similar problems in symplectic dynamics, is to describe sufficient conditions for the existence of periodic points with distinct rotation numbers. Whenever $g$ is a homeomorphism of a manifold $M$ in the isotopy class of the identity and $\check g$ is a lift to the universal covering space $\check M$ commuting with the covering transformations, and $\check{\pi}:\check{M} \to M$ is the covering map, we say that a periodic point $z\in M$ is a {\it contractible periodic point} if every $\check z\in \check \pi^{-1}(z)$ is also periodic, otherwise we call $z$ a {\it non-contractible periodic point}. The question on whether a homeomorphism of $\T{2}$ has periodic points with distinct rotation numbers is often reduced to the study of the co-existence of {\it contractible} and {\it non-contractible} periodic points, and recent works concerning conditions for the existence of non-contractible periodic points in surface or symplectic dynamics can be found \cite{gurel}, \cite{gingurel} and \cite{tal}.

In this work we study how much of the bounded deviation machinery applies to the case of conservative maps on a non-compact surface. Our main goal is to examine how far the dichotomy that was present in the study of endomorphisms of the circle holds in the case of the open annulus $\A:=\T{1}\times\R$. We will denoted by $\check \pi:\Aa:=\R\times \R \to\A$ the universal covering map of $\A$. Our main result is the following.

\begin{theoa}\label{Theoa}
  Let $f$ be a homeomorphism of $\A$ which is isotopic to the identity and preserves a Borel probability measure of full support. Assume that the connected components of the set of fixed points of $f$ are all compact. Let $\check{f}$ be a lift of $f$ to $\Aa:=\R\times \R$. Assume that $\check{f}$ has fixed points and that there exists an open topological disk $U\subset \A$ such that the set of fixed points of $\check{f}$ projects into of $U$.  Then one of the following alternatives must hold:
  \begin{itemize}
    \item[(1)] there exists an integer $q\geq 1$ such that for every irreducible rational number $r/s\in (0,1/q]$ the map $\check{z}\mapsto \check{f}^s(\check{z})+(r,0)$ has a fixed point or for every irreducible rational number $r/s\in (0,1/q]$ the map $\check{z}\mapsto \check{f}^s(\check{z})-(r,0)$ has a fixed point. In particular, $f$ has non-contractible periodic points of arbitrarily large prime periodic.

    \item[(2)] for every compact set $K$ of $\A$ there exists a real constant $M>0$ such that for every point $\check{z}$ and every integer $n\geq 1$ such that $\check{z}$ and $\check{f}^n(\check{z})$ belong to $\check{\pi}^{-1}(K)$ one has
        $$ \abs{p_1(\check{f}^n(\check{z}))-p_1(\check{z})}\leq M,$$
  \end{itemize}
  where $p_1: \Aa\to\R$ is the projection on the first coordinate.
\end{theoa}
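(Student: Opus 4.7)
The plan is to argue by contradiction: assume condition (2) fails, and use the failure to produce the abundance of non-contractible periodic points required by (1). Failure of (2) furnishes a compact set $K\subset \A$, a sequence $(\check z_k)$ in $\check\pi^{-1}(K)$ and integers $n_k\geq 1$ with $\check f^{n_k}(\check z_k)\in \check\pi^{-1}(K)$ yet $|p_1(\check f^{n_k}(\check z_k))-p_1(\check z_k)|\to\infty$. Since $f$ preserves a Borel probability measure of full support, Poincar\'e recurrence allows me to promote the $\check z_k$ to points whose projections are $f$-recurrent, while keeping the unbounded horizontal drift. The goal is then to convert this drift into a genuine ``rotational'' transverse circuit around $\A$.

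The central tool I would invoke is P.~Le Calvez's equivariant singular transverse foliation, applied to a maximal identity isotopy $I$ of $f$ in the sense of B\'eguin--Jaulent--Roesch (using the compactness of the connected components of $\fix(f)$ to pass to a maximal isotopy whose singular set is $\fix(f)$). This yields a singular oriented foliation $\mathcal F$ of $\A$, with $\sing(\mathcal F)\subset\fix(f)$, such that each $I$-trajectory is positively transverse to $\mathcal F$. Lifting produces $\check{\mathcal F}$ on $\Aa$. The hypothesis that $\fix(\check f)$ projects into an open topological disk $U\subset \A$ lets me select the component $\check U$ of $\check\pi^{-1}(U)$ containing $\fix(\check f)$; its integer horizontal translates are pairwise disjoint, and the complement $\Aa\setminus\bigcup_{k\in\Z}(\check U+(k,0))$ consists of proper vertical ``strips'' in which $\check f$ admits no fixed points. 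Using these strips as separators, the unbounded horizontal displacement of the lifted transverse trajectories of the $\check z_k$ forces the transverse path $I_{\pi(\check z_k)}^{n_k}$, once projected to $\A$, to cover a non-contractible transverse loop $\Gamma$ arbitrarily many times, and similarly with opposite orientation if the drift can be made negative.

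The final step is a forcing argument. The existence of the transverse loop $\Gamma$, together with a fixed point of $\check f$ inside $\check U$, puts us in the situation of the Le Calvez--Tal equivariant forcing machinery: by realizing admissible concatenations of $\Gamma$ and of trivial loops around $\fix(\check f)$ as admissible transverse paths, one produces, for every sufficiently small irreducible $r/s$, an admissible closed transverse path of period $s$ with homological rotation exactly $r$, and Le Calvez's realization theorem then exhibits a fixed point of $\check z\mapsto \check f^s(\check z)\pm(r,0)$. Choosing $q$ so that $1/q$ is smaller than both the positive and negative rotation speeds produced by $\Gamma$ gives the full range $(0,1/q]$ claimed in (1), and irreducibility ensures that $s$ is the prime period.

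The main obstacle is the middle step: extracting from the failure of (2) a \emph{bona fide} transverse loop of positive homological rotation, rather than merely a long transverse path with large horizontal endpoint displacement. What is delicate is that the displacement of $p_1$ is only controlled between two visits to $K$, whereas the intermediate transverse trajectory may meander. Overcoming this requires combining the geometric obstruction provided by the disk $U$ (so that the trajectory cannot ``absorb'' its winding into the fixed-point set) with a careful application of Poincar\'e recurrence and the equivariant structure of $\check{\mathcal F}$, in order to close the path up to a loop without losing the accumulated winding. This is the technical heart of the argument and will dominate the proof.
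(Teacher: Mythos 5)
Your overall framework (argue by contradiction, pass to a maximal isotopy and a transverse foliation of Le Calvez, close with the Le Calvez--Tal forcing machinery) is the same as the paper's, but the step you yourself flag as ``the technical heart'' is exactly the content of the theorem, and the one concrete mechanism you propose for it does not work. You claim that the integer translates of a component $\check U$ of $\check{\pi}^{-1}(U)$ leave as complement ``proper vertical strips'' acting as separators, so that unbounded drift forces the transverse trajectory to wind around a non-contractible loop. This fails: $U$ is only assumed to be an open topological disk, it may be unbounded and may itself drift horizontally without bound (e.g.\ a thickened spiral escaping to an end of $\A$), in which case the complement of $\bigcup_{k\in\Z}(\check U+(k,0))$ separates nothing; moreover $\fix(\check f)$ is invariant under $\check z\mapsto\check z+(1,0)$, so it is never contained in a single component $\check U$. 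It is precisely to circumvent this that the paper uses the second structural hypothesis, compactness of the connected components of $\fix(f)$, which your outline never invokes: it yields two essential compact annuli $\mathcal{A}_N,\mathcal{A}_S$ free of singularities and, combined with Lemma \ref{lemmadiameterbounded} (the disk hypothesis bounds the diameter of the singular set of $\check{\mathcal F}$ inside $\check U$ within a lifted compact annulus), permits a finite-cover pigeonhole argument showing that the transverse trajectory must cross three integer translates of one leaf (Lemma \ref{lemma41}); the crossings of the intermediate translates are themselves extracted from the Brouwer-type Proposition \ref{propocle}, applied to arcs $\delta_l$ through two singularities and the points $z_l$, $f^{n_l}(z_l)$, not from any separation property of $U$.

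There are two further gaps. First, your realization step (``admissible concatenations of $\Gamma$ and trivial loops\dots Le Calvez's realization theorem'') ignores that Proposition \ref{proposition26LCT} requires a \emph{linearly admissible loop with an $\mathcal F$-transverse self-intersection}; producing that self-intersection is what the whole of Section 3 (conditions (i)--(iii) of Proposition \ref{existencecontractiblepoints} and its corollaries) is for, and it rests on Theorem B/Theorem C, i.e.\ on the density of bi-recurrent points of the \emph{lift} $\check f$ (hypothesis $(H_2)$); Poincar\'e recurrence downstairs, which is all you use, does not give recurrence of $\check f$. Second, conclusion (1) only asks, for each $r/s$, for a fixed point of $\check f^s+(r,0)$ \emph{or} of $\check f^s-(r,0)$, so your effort to obtain drift ``in both orientations'' is unnecessary: one linearly admissible loop of a single sign suffices, via Proposition \ref{proposition26LCT} together with the fact that $\check f$ has fixed points and its rotation set is an interval. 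As written, the proposal reproduces the paper's general toolkit but leaves the decisive geometric argument unproved and replaces it with a claim that is false without the compactness hypothesis.
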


In order to prove Theorem A, we prove a recurrence type theorem in the lifted dynamics of a homeomorphism of $\A$ that is isotopic to the identity.

\begin{theob}\label{theob}
  Let $f$ be a homeomorphism of $\A$ which is isotopic to the identity and preserves a Borel probability measure of full support. Let $\check{f}$ be a lift of $f$ to $\Aa$. Suppose that $\check{f}$ has fixed points. Then one of the following alternatives must hold:
  \begin{itemize}
    \item [(1)] there exists an integer $q\geq 1$ such that for every irreducible rational number $r/s\in (0,1/q]$ the map $\check{z}\mapsto \check{f}^s(\check{z})+(r,0)$ has a fixed point or for every irreducible rational number $r/s\in (0,1/q]$ the map $\check{z}\mapsto \check{f}^s(\check{z})-(r,0)$ has a fixed point.

    \item [(2)] for every recurrent point $z\in\A$ there exists an open topological disk $V\subset \A$ containing $z$ such that: if $\check{V}$ is a lift of $V$ and $\check{z}$ is the lift of $z$ contained in $\check{V}$, then for every integer $n$ satisfying $f^n(z)\in V$ we have  $\check{f}^n(\check{z}) \in \check{V}$. In particular $\check{f}$ is non-wandering.
  \end{itemize}
\end{theob}
%
%
%

The main technique used in the proofs of Theorems A and B is the Equivariant Brouwer Theory of P. Le Calvez (see \cite{lecalvez1} and \cite{lecalvez2}) and a recently developed accompanying orbit forcing theory (see \cite{lct}). The paper is organized as follows: the second section introduces the basic lemmas and results from the above mentioned Equivariant Brouwer Theory and the forcing results, as well as details the concepts of rotation sets for annulus homeomorphisms and states some results that are used in the rest of the paper. Section 3 provides the necessary lemmas and results for obtaining Theorem B, Section 4 includes the proof of our main result and Section 5 provides two examples displaying how tight are the hypothesis of Theorem A.

{\it Acknowledgment.} We are very grateful for Patrice Le Calvez, whose several suggestions helped to improve the exposition of the paper and to greatly simplify some proofs.

\section{Preliminary results}

In this section, we state different results and definitions that will be useful in the rest of the article. The main tool will be the ``forcing theory'' introduced recently by P. Le Calvez and the second author (see \cite{lct} for more details). This theory will be expressed in terms of maximal isotopies, transverse foliations and transverse trajectories.

\subsection{The open annulus}
We will denote by $\T{1}$ the quotient space $\R/\Z$ and by $\A:=\T{1}\times \R$ the open annulus. We will endow $\A$ with its usual topology and orientation. We will denote by $\fonc{\check{\pi}}{\Aa=\R\times \R}{\A}$ the universal covering map of $\A$. We will denote by $\fonc{p_1}{\Aa}{\R}$ the projection on the first coordinate. We note that every nontrivial covering automorphism of $\check{\pi}$ is an iterate of the translation defined by $\check{z}\mapsto \check{z}+(1,0)$. A compact set $X\subset \A$ will be called \textit{essential} if its complement has two unbounded connected components, and a general set $X\subset \A$ will be called essential if it contains an essential compact set.

\subsection{Paths, lines, loops}
Let $M$ be an oriented surface. A \textit{path} on $M$ is a continuous map $\fonc{\gamma}{J}{M}$ defined on an interval $J$ of $\R$. In absence of ambiguity its image also will be called a path and denote by $\gamma$. We will denote $\fonc{\gamma^{-1}}{-J}{M}$ the path defined by $\gamma^{-1}(t)=\gamma(-t)$. If $X$ and $Y$ are two disjoint subsets of $M$, we will say that a path $\fonc{\gamma}{[a,b]}{M}$ \textit{joins} $X$ to $Y$ if $\gamma(a)\in X$ and $\gamma(b)\in Y$. A path $\fonc{\gamma}{J}{M}$ is \textit{proper} if the interval $J$ is open and the preimage of every compact subset of $M$ is compact. A \textit{line} on $M$ is an injective and proper path $\fonc{\lambda}{J}{M}$, it inherits a natural orientation induced by the usual orientation of $\R$. A path $\fonc{\gamma}{\R}{M}$ such that $\gamma(t+1)=\gamma(t)$ for every $t\in\R$ lifts a continuous map $\fonc{\Gamma}{\T{1}}{M}$. We will say that $\Gamma$ is a \textit{loop} and $\gamma$ is its natural lift. If $n\geq 1$ is an integer, we denote $\Gamma^n$ the loop lifted by the path $t\mapsto \gamma(nt)$.

\subsection{Lines of the plane}

Let $\lambda$ be a line of the plane $\R^2$. The complement of $\lambda$ has two connected components, $R(\lambda)$ which is on the right of $\lambda$ and $L(\lambda)$ which is on its left. If $X$ and $Y$ are two disjoint subsets of $\R^2$, we will say that a line $\lambda$ \textit{separates} $X$ from $Y$, if $X$ and $Y$ belong to different connected components of the complement of $\lambda$. Let us suppose that $\lambda_0$ and $\lambda_1$ are two disjoint lines of $\R^2$. We will say that $\lambda_0$ and $\lambda_1$ are \textit{comparable} if their right components are comparable for the inclusion. Note that $\lambda_0$ and $\lambda_1$ are not comparable if and only if $\lambda_0$ and $(\lambda_1)^{-1}$ are comparable.\\

Let us consider three lines $\lambda_0$, $\lambda_1$ and $\lambda_2$ in $\R^2$. We will say that $\lambda_2$ is \textit{above} $\lambda_1$ relative to $\lambda_0$ (and $\lambda_1$ is \textit{below} $\lambda_2$ relative to $\lambda_0$) if
\begin{itemize}
  \item the three lines are pairwise disjoint;
  \item none of the lines separates the two others;
    \item if $\gamma_1$ and $\gamma_2$ are two disjoint paths that join $z_1=\lambda_0(t_1)$, $z_2=\lambda_0(t_2)$ to $z'_1\in \lambda_1$, $z'_2\in \lambda_2$ respectively, and that do not meet the three lines but at the ends,
\end{itemize}
then $t_2>t_1$. This notion does not depend on the orientation of $\lambda_1$ and $\lambda_2$ but depends of the orientation of $\lambda_0$ (see Figure \ref{fig:orderoflines}).


\begin{center}
\begin{figure}[h!]
  \centering
    \includegraphics{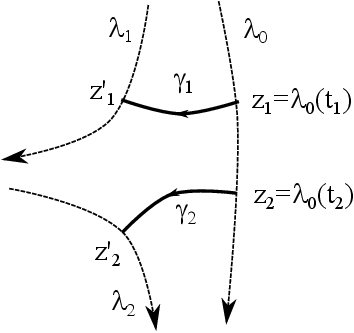}
  \caption{Order of lines. $\lambda_2$ is above $\lambda_1$ relative to $\lambda_0$ }
  \label{fig:orderoflines}
\end{figure}
\end{center}


\subsection{Transverse paths to surface foliations, transverse intersection}

Let $M$ be an oriented surface. By an \textit{oriented singular foliation} $\mathcal{F}$ on $M$ we mean a closed set $\sing (\mathcal{F})$, called \textit{the set of singularities of $\mathcal{F}$}, together with an oriented foliation $\mathcal{F}'$ on the complement of $\sing (\mathcal{F})$, called \textit{the domain of $\mathcal{F}$} denoted by $\dom(\mathcal{F})$, i.e. $\mathcal{F}'$ is a partition of $\dom(\mathcal{F})$ into connected oriented $1$-manifolds (circles or lines) called \textit{leaves of $\mathcal{F}$}, such that for every $z$ in $\dom(\mathcal{F})$ there exist an open neighborhood $W$ of $z$, called \textit{trivializing neighborhood} and an orientation-preserving homeomorphism called \textit{trivialization chart at $z$}, $\fonc{h}{W}{(0,1)^2}$ that sends the restricted foliation $\mathcal{F}|_W$ onto the vertical foliation oriented downward. If the singular set of $\mathcal{F}$ is empty, we will say that the foliation $\mathcal{F}$ is \textit{non singular}. For every $z\in \dom(\mathcal{F})$ we will write $\phi_z$ for the leaf of $\mathcal{F}$ that contains $z$. Let $\phi$ be a leaf of $\mathcal{F}$. Suppose that a point $z\in \phi$ has a trivialization neighborhood such that each leaf of $\mathcal{F}$ contains no more than one leaf of $\mathcal{F}|_W$. In this case every point of $\phi$ satisfies the same property. If furthermore no closed leaf of $\mathcal{F}$ meets $W$, we will say that $\phi$ is \textit{wandering}.\\


A path $\fonc{\gamma}{J}{\dom(\mathcal{F})}$ is \textit{positively transverse}\footnote{In the whole text ``transverse'' will mean ``positively transverse''.} to $\mathcal{F}$ if for every $t_0\in J$, and $h$ trivialization chart at $\gamma(t_0)$ the application $t\mapsto \pi_1(h(\gamma(t)))$, where $\fonc{\pi_1}{(0,1)^2}{(0,1)}$ is the projection on the first coordinate, is increasing in a neighborhood of $t_0$. We note that if $\check{M}$ is a covering space of $M$ and $\fonc{\check{\pi}}{\check{M}}{M}$ the covering projection, then $\mathcal{F}$ can be naturally lifted to a singular foliation $\check{\mathcal{F}}$ of $\check{M}$ such that $\dom(\check{\mathcal{F}})= \check{\pi}^{-1}(\dom(\mathcal{F}))$. We will denote by $\widetilde{\dom}(\mathcal{F})$ the universal covering space of $\dom(\mathcal{F})$ and $\widetilde{\mathcal{F}}$ the foliation lifted from $\mathcal{F}|_{\dom(\mathcal{F})}$. We note that $\widetilde{\mathcal{F}}$ is a non singular foliation of $\widetilde{\dom}(\mathcal{F})$.  Moreover if $\fonc{\gamma}{J}{\dom(\mathcal{F})}$ is \textit{positively transverse} to $\mathcal{F}$, every lift $\fonc{\check{\gamma}}{J}{\dom(\check{\mathcal{F}})}$ of $\gamma$ is \textit{positively transverse} to $\check{\mathcal{F}}$. In particular every lift $\fonc{\widetilde{\gamma}}{J}{\widetilde{\dom}(\mathcal{F})}$ of $\gamma$ to the universal covering space $\widetilde{\dom}(\mathcal{F})$ of $\dom(\mathcal{F})$  is \textit{positively transverse} to the lifted non singular foliation $\widetilde{\mathcal{F}}$.

\subsubsection{$\mathcal{F}$-transverse intersection for non singular planar foliations}

In this paragraph, we will suppose that $\mathcal{F}$ is a non singular foliation on the plane $\R^2$. We recall the following facts (see \cite{haereeb}).

\begin{itemize}
  \item Every leaf of $\mathcal{F}$ is a wandering line;
  \item the space of leaves of $\mathcal{F}$, denoted by $\Sigma$ furnished with the quotient topology, inherits a structure of connected and simply connected one-dimensional manifold;
  \item $\Sigma$ is Hausdorff if and only if the foliation $\mathcal{F}$ is trivial, that means that it is the image of the vertical foliation by a planar homeomorphism, or equivalently if all leaves of $\mathcal{F}$ are comparable.
\end{itemize}

    We will say that two transverse paths $\fonc{\gamma_1}{J_1}{\R^2}$ and $\fonc{\gamma_2}{J_2}{\R^2}$ are \textit{$\mathcal{F}$-equivalent} if they satisfy the three following equivalent conditions:

\begin{itemize}
  \item there exists an increasing homeomorphism $\fonc{h}{J_1}{J_2}$ such that for every $t\in J_1$ we have $\phi_{\gamma_1(t)}=\phi_{\gamma_2(h(t))}$;
  \item the paths $\gamma_1$ and $\gamma_2$ meet the same leaves of $\mathcal{F}$;
  \item the paths $\gamma_1$ and $\gamma_2$ project onto the same path of $\Sigma$.
\end{itemize}
Moreover, if $J_1=[a_1,b_1]$ and $J_2=[a_2,b_2]$ are two compact segments, these conditions are equivalent to next one:
\begin{itemize}
  \item One has $\phi_{\gamma_1(a_1)}= \phi_{\gamma_2(a_2)}$ and $\phi_{\gamma_1(b_1)}= \phi_{\gamma_2(b_2)}$.
\end{itemize}
In that case, note that the leaves met by $\gamma_1$ are the leaves $\phi$ of $\mathcal{F}$ such that $R(\phi_{\gamma_1(a_1)})\subset R(\phi) \subset R(\phi_{\gamma_2(b_1)})$. If the context is clear, we just say that the paths are equivalent and we omit the dependence on $\mathcal{F}$.\\

Let $\fonc{\gamma_1}{J_1}{\R^2}$ and $\fonc{\gamma_2}{J_2}{\R^2}$ be two transverse paths such that there are $t_1\in J_1$ and $t_2\in J_2$ satisfying $\phi_{\gamma_1(t_1)}=\phi_{\gamma_2(t_2)}=\phi$. We will say that $\gamma_1$ and $\gamma_2$ \textit{intersect $\mathcal{F}$-transversally and positively at $\phi$} if there exist $a_1,b_1$ in $J_1$ satisfying $a_1<t_1<b_1$, and $a_2,b_2$ in $J_2$ satisfying $a_2<t_2<b_2$, such that
\begin{itemize}
  \item $\phi_{\gamma_2(a_2)}$ is below $\phi_{\gamma_1(a_1)}$ relative to $\phi$; and
  \item $\phi_{\gamma_2(b_2)}$ is above $\phi_{\gamma_1(b_1)}$ relative to $\phi$.
\end{itemize}
See Figure \ref{fig:intersectiontransverse}.

\begin{center}
\begin{figure}[h!]
  \centering
    \includegraphics{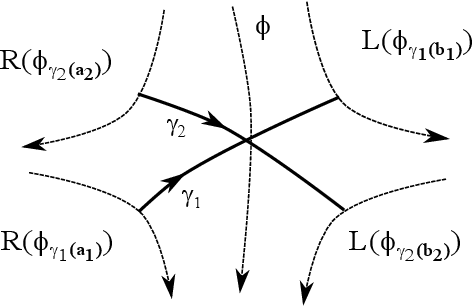}
  \caption{The paths $\gamma_1$ and $\gamma_2$ intersect $\mathcal{F}$-transversally and positively at $\phi$.}
  \label{fig:intersectiontransverse}
\end{figure}
\end{center}

Note that, if $\gamma_1$ intersects $\mathcal{F}$-transversally $\gamma_2$, if $\gamma'_1$ is equivalent to $\gamma_1$ and $\gamma'_2$ is equivalent to $\gamma_2$, then $\gamma'_1$ intersects $\mathcal{F}$-transversally $\gamma'_2$. We will say that the equivalence class of $\gamma_1$ intersects transversally the equivalence class of $\gamma_2$.

As none of the leaves $\phi$, $\phi_{\gamma_1(a_1)}$, $\phi_{\gamma_2(a_2)}$ separates the two others, one deduces that
$$  \phi_{\gamma_1(a_1)} \subset L(\phi_{\gamma_2(a_2)}),\quad  \phi_{\gamma_2(a_2)} \subset L(\phi_{\gamma_1(a_1)}).   $$
Similarly as none of the leaves $\phi$, $\phi_{\gamma_1(b_1)}$, $\phi_{\gamma_2(b_2)}$ separates the two others, one deduces that
$$  \phi_{\gamma_1(b_1)} \subset R(\phi_{\gamma_2(b_2)}), \quad  \phi_{\gamma_2(b_2)} \subset R(\phi_{\gamma_1(b_1)}).   $$

Observe that these properties remain true when $a_1$, $a_2$ are replaced by smaller parameters, $b_1$, $b_2$  by larger parameters, and $\phi$ by any another leaf met by $\gamma_1$ and $\gamma_2$. Note that $\gamma_1$ and $\gamma_2$ have at least one intersection point and that one can find two transverse paths $\gamma'_1$, $\gamma'_2$ equivalent to $\gamma_1$, $\gamma_2$ respectively, such that $\gamma'_1$ and $\gamma'_2$ have a unique intersection point, located in $\phi$, with a topological transverse intersection. Note that, if $\gamma_1$ and $\gamma_2$ are two paths that meet the same leaf $\phi$ of $\mathcal{F}$, then either they intersect $\mathcal{F}$-transversally, or one can find two transverse paths $\gamma'_1$, $\gamma'_2$ equivalent to $\gamma_1$, $\gamma_2$ respectively, with no intersection point.

\subsubsection{$\mathcal{F}$-intersection transverse in the general case}

Let us return now to the general case, i.e we will suppose that $\mathcal{F}$ is an oriented singular foliation on an oriented surface $M$. All previous definitions can be extended in the case that every connected component of $M$ is a plane and $\mathcal{F}$ is non singular. We will say that two transverse paths $\fonc{\gamma_1}{J_1}{\dom(\mathcal{F})}$ and $\fonc{\gamma_2}{J_2}{\dom(\mathcal{F})}$ are \textit{$\mathcal{F}$-equivalent} if there exist $\fonc{\widetilde{\gamma}_1}{J_1}{\widetilde{\dom}(\mathcal{F})}$ and $\fonc{\widetilde{\gamma}_2}{J_2}{\widetilde{\dom}(\mathcal{F})}$ lift of $\gamma_1$ and $\gamma_2$ respectively to the universal covering space  $\widetilde{\dom}(\mathcal{F})$ of $\dom(\mathcal{F})$ that are $\widetilde{\dom}(\mathcal{F})$-equivalent. This implies that there exists an increasing homeomorphism $\fonc{h}{J_1}{J_2}$ such that for every $t\in J_1$ we have $\phi_{\gamma_1(t)}=\phi_{\gamma_2(h(t))}$. Nevertheless these two conditions are not equivalent (see Figure 3 from\cite{lct}). One can prove that $\gamma_1$ and $\gamma_2$ are $\mathcal{F}$-equivalent if and only if, there exists a holonomic homotopy between $\gamma_1$ and $\gamma_2$, that is, if there exist a continuous transformation $\fonc{H}{J_1\times [0,1]}{\dom(\mathcal{F})}$ and an increasing homeomorphism $\fonc{h}{J_1}{J_2}$ satisfying:
\begin{itemize}
  \item $H(t,0)=\gamma_1(t)$, $H(t,1)=\gamma_2(h(t))$; and
  \item for every $t\in J_1$ and every $s_1,s_2\in [0,1]$, we have $\phi_{H(t,s_1)}= \phi_{H(t,s_2)}$.
\end{itemize}
We will say that a loop $\fonc{\Gamma}{\T{1}}{\dom(\mathcal{F})}$ is positively transverse to $\mathcal{F}$ if it is the case for its natural lift $\fonc{\gamma}{\R}{\dom(\mathcal{F})}$. We will say that two loops $\fonc{\Gamma_1}{J_1}{\dom(\mathcal{F})}$ and $\fonc{\Gamma_2}{J_2}{\dom(\mathcal{F})}$ are $\mathcal{F}$-equivalent if there exist two lifts $\fonc{\widetilde{\gamma}_1}{\R}{\widetilde{\dom}(\mathcal{F})}$ and $\fonc{\widetilde{\gamma}_2}{\R}{\widetilde{\dom}(\mathcal{F})}$  of $\Gamma_1$ and $\Gamma_2$ respectively to the universal covering space  $\widetilde{\dom}(\mathcal{F})$ of $\dom(\mathcal{F})$, a covering automorphism $T$ and an orientation preserving homeomorphism $\fonc{h}{\R}{\R}$ such that for every $t\in \R$ we have
$$ \widetilde{\gamma}_1(t+1)=T(\widetilde{\gamma}_1(t)), \; \widetilde{\gamma}_2(t+1)=T(\widetilde{\gamma}_2(t)),\; h(t+1)=h(t)+1, \; \phi_{\widetilde{\gamma}_2(h(t))}=\phi_{\widetilde{\gamma}_1(t)}.$$ We note that for every integer $n\geq 1 $ the loops $\Gamma^n_1$ and $\Gamma^n_2$ are $\mathcal{F}$-equivalent if this is the case for $\Gamma_1$ and $\Gamma_2$. If two loops $\Gamma_1$ and $\Gamma_2$ are $\mathcal{F}$-equivalent, then there exists a holonomic homotopy between them and therefore they are freely homotopic in $\dom(\mathcal{F})$. Nevertheless the converse does not hold (see Figure 4 from \cite{lct}).\\

 Let $\fonc{\gamma_1}{J_1}{M}$ and $\fonc{\gamma_2}{J_2}{M}$ be two transverse paths that meet a common leaf $\phi=\phi_{\gamma_1(t_1)}=\phi_{\gamma_2(t_2)}$. We will say that $\gamma_1$ and $\gamma_2$ \textit{intersect $\mathcal{F}$-transversally at $\phi$} if there exist paths $\fonc{\widetilde{\gamma}_1}{J_1}{\widetilde{\dom}(\mathcal{F})}$ and $\fonc{\widetilde{\gamma}_2}{J_2}{\widetilde{\dom}(\mathcal{F})}$, lifting $\gamma_1$ and $\gamma_2$, with a common leaf $\widetilde{\phi}=\phi_{\widetilde{\gamma}_1(t_1)}=\phi_{\widetilde{\gamma}_2(t_2)}$ that lifts $\phi$, and intersecting $\widetilde{\mathcal{F}}$-transversally at $\widetilde{\phi}$. Here again, we can give sign to the intersection. As explained in the last subsection, there exist $t'_1\in J_1$ and $t'_2\in J_2$ such that $\gamma_1(t'_1)=\gamma_2(t'_2)$ and such that $\gamma_1$ and $\gamma_2$ intersect $\mathcal{F}$-transversally at $\phi_{\gamma_1(t'_1)}=\phi_{\gamma_2(t'_2)}$. In this case we will say that $\gamma_1$ and $\gamma_2$ intersect $\mathcal{F}$-transversally at $\gamma_1(t'_1)=\gamma_2(t'_2)$. In the case where $\gamma_1=\gamma_2$ we will talk of a $\mathcal{F}$-transverse self-intersection. A transverse path $\gamma$ has a $\mathcal{F}$-transverse self-intersection if for every lift $\widetilde{\gamma}$ to the universal covering space of the domain of the foliation, there exists a non trivial covering automorphism $T$ such that $\widetilde{\gamma}$ and $T(\widetilde{\gamma})$ have a $\widetilde{\mathcal{F}}$-transverse intersection. \\

Similarly, let $\Gamma$ be a loop that is transverse to $\mathcal{F}$ and $\gamma$ its natural lift. If $\gamma$ intersects $\mathcal{F}$-transversally a transverse path $\gamma'$ at a leaf $\phi$, we will say that $\Gamma$ and $\gamma'$ intersect $\mathcal{F}$-transversally at $\phi$. Moreover if $\gamma'$ is the natural lift of a transverse loop $\Gamma'$, we will say that $\Gamma$ and $\Gamma'$ intersect $\mathcal{F}$-transversally at $\phi$. Here again we can talk of self-intersection.\\

A transverse path $\fonc{\gamma}{\R}{M}$ will be called \textit{$\mathcal{F}$-positively recurrent} if for every segment $J$ of $\R$ and every $t\in\R$ there exists a segment $J'$ contained in $[t,+\infty)$ such that $\gamma|_{J'}$ is $\mathcal{F}$-equivalent to $\gamma|_J$. It will be called \textit{$\mathcal{F}$-negatively recurrent} if for every segment $J$ of $\R$ and every $t\in\R$ there exists a segment $J'$ contained in $(-\infty,t]$ such that $\gamma|_{J'}$ is $\mathcal{F}$-equivalent to $\gamma|_J$. It will be called \textit{$\mathcal{F}$-bi-recurrent} if it is both $\mathcal{F}$-positively and $\mathcal{F}$-negatively recurrent. We note that, if $\fonc{\gamma}{\R}{M}$ and $\fonc{\gamma'}{\R}{M}$ are $\mathcal{F}$-equivalent and if $\gamma$ is $\mathcal{F}$-positively recurrent (or $\mathcal{F}$-negatively recurrent), then so is $\gamma'$. We will say that a $\mathcal{F}$-equivalent class is positively recurrent (negatively recurrent or bi-recurrent) is some representative of the class is $\mathcal{F}$-positively recurrent (respectively $\mathcal{F}$-negatively recurrent, $\mathcal{F}$-bi-recurrent).

The following result describes paths with no transverse self-intersection on the two-dimensional sphere.

\begin{prop}[\cite{lct}]\label{proposition2LCT}
  Let $\mathcal{F}$ be an oriented singular foliation on $\s^2$ and let $\fonc{\gamma}{\R}{\s^2}$ be a $\mathcal{F}$-bi-recurrent transverse path. The following properties are equivalent:
  \begin{itemize}
    \item[(i)] $\gamma$ has no $\mathcal{F}$-transverse self-intersection;
    \item[(ii)] there exists a transverse simple loop $\Gamma'$ such that $\gamma$ is equivalent to the natural lift $\gamma'$ of $\Gamma'$;
    \item[(iii)] the set $U=\bigcup_{t\in\R}\phi_{\gamma(t)}$ is an open annulus.
  \end{itemize}
\end{prop}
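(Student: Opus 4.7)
The plan is to prove the equivalence by verifying four implications: the two easy ones (ii) $\Rightarrow$ (i) and (ii) $\Rightarrow$ (iii), then (iii) $\Rightarrow$ (ii), and finally the substantive direction (i) $\Rightarrow$ (iii). The first two are essentially topological tubular-neighborhood arguments, the third uses the structure of a nonsingular foliation on an annulus, and the last is where the hypothesis of no $\mathcal{F}$-transverse self-intersection is really used.

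For (ii) $\Rightarrow$ (i), if $\gamma$ is $\mathcal{F}$-equivalent to the natural lift $\gamma'$ of a simple transverse loop $\Gamma'$, then lifting $\Gamma'$ to $\widetilde{\dom}(\mathcal{F})$ yields pairwise disjoint injective paths (simpleness of $\Gamma'$ prevents any two of its lifts from crossing). Any two distinct translates of a fixed lift $\widetilde{\gamma'}$ of $\gamma'$ are two such lifts, hence disjoint, so no pair can have a $\widetilde{\mathcal{F}}$-transverse intersection; since $\mathcal{F}$-equivalence preserves the transverse self-intersection property, $\gamma$ has none either. For (ii) $\Rightarrow$ (iii), the paths $\gamma$ and $\gamma'$ meet the same leaves, so $U$ coincides with the saturation of $\Gamma'$; covering $\Gamma'$ by finitely many trivializing boxes of $\mathcal{F}$ and gluing them along $\Gamma'$ endows $U$ with the structure of an open annulus fibering over $\Gamma'$.

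For (iii) $\Rightarrow$ (ii), the restriction $\mathcal{F}|_U$ is a nonsingular oriented foliation on the annulus, and all of its leaves are met by the bi-recurrent path $\gamma$. Bi-recurrence prevents any leaf from being closed or from spiraling to an end of $U$, forcing every leaf to be a properly embedded line joining the two ends of $U$; I would construct a simple transverse loop $\Gamma'$ generating $\pi_1(U)$ by taking a short transverse arc and closing it through a holonomy argument, and then verify $\mathcal{F}$-equivalence between $\gamma$ and the natural lift of $\Gamma'$ by comparing the ordered sequences of leaves they meet.

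The substantive direction is (i) $\Rightarrow$ (iii), and my plan is to show that $\s^2 \setminus U$ has exactly two connected components. Lift $\gamma$ to $\widetilde{\dom}(\mathcal{F})$ and note that the no-transverse-self-intersection hypothesis means that $\widetilde{\gamma}$ and each nontrivial translate $T(\widetilde{\gamma})$ admit $\mathcal{F}$-equivalent representatives that are disjoint in $\widetilde{\dom}(\mathcal{F})$; bi-recurrence of $\gamma$ then supplies enough repetition of leaves to force a coherent total ordering of these translates inside the planar leaf space. From this ordering one extracts a proper $\Z$-action on the saturated planar domain spanned by the translates, whose quotient is precisely $U$. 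The main obstacle will be excluding degenerate topologies of $U$ (disk, sphere, higher genus): any such pathology would have to produce, via bi-recurrence, two arcs of $\gamma$ whose leaves realize a topological transverse crossing, contradicting (i); turning this heuristic into a clean contradiction via the ordering/forcing results recalled in the preliminaries is expected to be the technical core of the proof.
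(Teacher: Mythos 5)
The paper does not prove this proposition; it is imported verbatim as Proposition~2 of Le Calvez--Tal \cite{lct}, so there is no in-paper argument to compare against. The only trace the paper leaves of the original proof is Lemma~\ref{lemmaproofProposition2}, stated as ``contained in the proof of Proposition~2 from \cite{lct}.'' That lemma reveals the actual mechanism: one builds a \emph{first-return} simple transverse loop $\Gamma'$ directly from $\gamma$ (choosing $a'<b'$ with $\gamma(a')=\gamma(b')$ and $t\mapsto\phi_{\gamma(t)}$ injective on $[a',b')$), obtains the annulus $U_{\Gamma'}$ as the saturation of $\Gamma'$, and then shows that if $\gamma$ ever leaves $U_{\Gamma'}$, bi-recurrence forces a return on the same side and hence a $\mathcal{F}$-transverse self-intersection. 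The contrapositive gives $(i)\Rightarrow$ ``$\gamma$ stays in $U_{\Gamma'}$,'' from which $(ii)$ and $(iii)$ follow simultaneously. Your proposed route for $(i)\Rightarrow(iii)$ --- lifting to $\widetilde{\dom}(\mathcal{F})$, extracting a total order on the disjoint translates of $\widetilde\gamma$, and recovering $U$ as a quotient by a properly discontinuous $\Z$-action --- is a genuinely different strategy. It is not unreasonable, but it is not a proof: you explicitly write that excluding the degenerate topologies for $U$ ``is expected to be the technical core of the proof,'' which concedes that the key step is unverified. Concretely, you have not shown why bi-recurrence together with disjointness of translates forces the stabilizer of the saturated planar set to be infinite cyclic and the action proper, nor have you excluded $U$ being a disk or having infinitely many complementary components; the first-return construction in \cite{lct} sidesteps all of this by producing the annulus $U_{\Gamma'}$ up front.

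A smaller remark on the easy directions: for $(ii)\Rightarrow(iii)$, gluing trivializing boxes along $\Gamma'$ only describes a tubular neighbourhood of $\Gamma'$, not the full saturation $U$ (leaves met by $\Gamma'$ may wander far from the loop). The clean argument passes to $\widetilde{\dom}(\mathcal{F})$: the lift $\widetilde{\gamma}'$ of $\gamma'$ is an injective transverse line, it meets each leaf at most once, and its saturation $\widetilde{U}$ is open, connected, and simply connected (any disk bounded in the plane by a loop of $\widetilde{U}$ is swept out by leaves meeting $\partial D\subset\widetilde U$, since leaves are lines and must exit $D$); the deck transformation $T_0$ attached to the loop $\Gamma'$ acts freely and properly discontinuously on $\widetilde U$ and the quotient is an annulus. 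Similarly, $(iii)\Rightarrow(ii)$ as you sketch it needs the observation that no leaf of $\mathcal{F}|_U$ can be a circle (a positively transverse bi-recurrent path would have to cross such a circle infinitely often in one direction without ever crossing back, which is impossible) before the holonomy construction of a transverse generating loop can start. These are fixable gaps, but the proposal as written does not yet close them, and the main implication $(i)\Rightarrow(iii)$ remains unproved.
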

%
%

\subsection{Maximal isotopies, transverse foliations, admissible paths, realizability of linearly admissible transverse loops}

\subsubsection{Isotopies, maximal isotopies}

Let $M$ be an oriented surface. Let $f$ be a homeomorphism of $M$. An \textit{identity isotopy} of $f$ is a path that joins the identity to $f$ in the space of homeomorphisms of $M$, furnished with the $C^0$-topology. We will say that \textit{$f$ is isotopic to the identity} if the set of identity isotopies of $f$ is not empty. Let $I=\left(f_t\right)_{t\in [0,1]}$ be an identity isotopy of $f$. Given $z\in M$ we can define the \textit{trajectory of $z$} as the path $I(z): t \mapsto f_t(z)$. More generally, for every integer $n\geq 1$ we define $I^n(z)=\prod_{0\leq k<n} I(f^k(z))$ by concatenation. We will also use the following notations

$$ I^{\N}(z)= \prod_{k\in \N} I(f^k(z)), \quad I^{-\N}(z)= \prod_{k\in \N} I(f^{-k}(z)), \quad I^{\Z}(z)= \prod_{k\in \Z} I(f^k(z)).     $$
The last path will be called the \textit{whole trajectory of $z$}. One can define the fixed point set of $I$ as $\fix(I)= \cap_{t\in [0,1]} \fix(f_t)$, which is the set of points with trivial whole trajectory. The complement of $\fix(I)$ will called the \textit{domain of $I$}, and it will be denoted by $\dom(I)$.\\
In general, let us say that an identity isotopy of $f$ is a maximal isotopy, if there is no fixed point of $f$ whose trajectory is contractible relative to the fixed point set of $I$. A very recent result of F. B\'eguin, S. Crovisier and F. Le Roux (see \cite{BCLR16}) asserts that such an isotopy always exists if $f$ is isotopic to the identity (a slightly weaker result was previously proved by O. Jaulent (see \cite{jaulent})). Here we prefer to follow \cite{BCLR16}, because Jaulent's Theorem about existence of maximal isotopies cannot be stated in the following natural form.

\begin{theo}[\cite{jaulent}, \cite{BCLR16}]\label{existence maximal isotopy}
  Let $M$ be an oriented surface. Let $f$ be a homeomorphism of $M$ which is isotopic to the identity and let $I'$ be an identity isotopy of $f$. Then there exists an identity isotopy $I$ of $f$ such that:
  \begin{itemize}
    \item[(i)] $\fix(I')\subset \fix(I)$;
    \item[(ii)] $I$ is homotopic to $I'$ relative of $\fix(I')$;
    \item[(iii)] there is no point $z\in \fix(f)\setminus \fix(I)$ whose trajectory $I(z)$ is homotopic to zero in $M\setminus \fix(I)$.
  \end{itemize}
\end{theo}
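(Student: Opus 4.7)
The plan is to prove this by Zorn's lemma on the partially ordered set of identity isotopies extending $I'$. Specifically, I would consider the set $\mathcal{I}$ of identity isotopies $J$ of $f$ satisfying (i) and (ii), ordered by declaring $J_1 \leq J_2$ whenever $\fix(J_1) \subset \fix(J_2)$ and $J_2$ is homotopic to $J_1$ relative to $\fix(J_1)$. The isotopy $I'$ itself lies in $\mathcal{I}$, so the poset is non-empty, and any maximal element is a candidate for the $I$ asserted in the theorem.

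The first main step is to verify that every chain $(J_\alpha)_{\alpha\in A}$ in $\mathcal{I}$ admits an upper bound. Set $F=\overline{\bigcup_{\alpha\in A}\fix(J_\alpha)}$, which is closed in $M$. On the open complement $M\setminus F$ the isotopies $J_\alpha$ become pairwise homotopic rel $\fix(J_\alpha)$ by definition of $\leq$, so they determine a coherent homotopy class on each compact subset of $M\setminus F$ once $\alpha$ is large enough. A diagonal/exhaustion argument (over an increasing sequence of compact subsets of $M\setminus F$) then produces a limit isotopy $J_\infty$ which fixes $F$ pointwise, restricts to $f$ at time one, and satisfies $J_\alpha \leq J_\infty$ for every $\alpha$. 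Verifying that $J_\infty$ still satisfies (ii) requires concatenating the homotopies from each $J_\alpha$ to $I'$ carefully, using that all of them are rel the fixed set $\fix(I')$.

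Having built upper bounds, Zorn's lemma produces a maximal element $I\in\mathcal{I}$, which trivially satisfies (i) and (ii). To establish (iii), argue by contradiction: suppose some $z\in\fix(f)\setminus\fix(I)$ has trajectory $I(z)$ null-homotopic in $M\setminus\fix(I)$. Since $z$ is fixed by $f$, the path $I(z)$ is a loop based at $z$; a null-homotopy of this loop in $M\setminus\fix(I)$ provides a continuous map of a disk whose boundary traces out $I(z)$. Using this disk as a template, one performs a local surgery on $I$ in a neighborhood of the support of the null-homotopy, replacing $I$ by a new isotopy $\widetilde I$ which keeps $z$ fixed for all times $t\in[0,1]$ while still agreeing with $I$ outside this neighborhood and on $\fix(I)$. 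Then $\widetilde I \in \mathcal{I}$ and $I \leq \widetilde I$ strictly (as $z\in\fix(\widetilde I)\setminus\fix(I)$), contradicting maximality.

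The hard part, and the reason this required serious work in \cite{jaulent} and \cite{BCLR16}, is the upper-bound step for chains. The fixed-point sets $\fix(J_\alpha)$ can accumulate in pathological ways, so one cannot simply paste the $J_\alpha$ together; the right notion of convergence is local and homotopical rather than uniform. One must also propagate condition (ii) through the transfinite process, so that at the end the limit isotopy is still homotopic to $I'$ rel the possibly much smaller set $\fix(I')$. The surgery step producing $\widetilde I$ from a contractible trajectory is comparatively standard once one has a tubular neighborhood of the trajectory avoiding $\fix(I)$ together with the null-homotopy, but making the modification genuinely continuous in $t$ and preserving the homeomorphism property at every time is where one uses the full force of planar Brouwer-type techniques.
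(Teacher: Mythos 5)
This statement is not proved in the paper at all: it is quoted as an external result of Jaulent and B\'eguin--Crovisier--Le Roux, so the only fair comparison is with those works, whose general strategy (maximality of the fixed set among ``unlinked'' configurations, then a contradiction argument for condition (iii)) your sketch does echo. However, your proposal has a genuine gap exactly where the cited papers do their real work: the upper-bound step for chains. You assert that a chain $(J_\alpha)$ admits an upper bound via ``a diagonal/exhaustion argument'' producing a limit isotopy $J_\infty$ fixing $F=\overline{\bigcup_\alpha \fix(J_\alpha)}$ pointwise. There is no compactness or equicontinuity available in $\homeo(M)$ with the $C^0$ topology that would make the isotopies $J_\alpha$ converge, even locally, and nothing in your argument produces an actual path of homeomorphisms of $M$, defined for all $t\in[0,1]$, fixing every point of $F$ and ending at $f$. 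Knowing the homotopy class of $J_\alpha$ on compact pieces of $M\setminus F$ ``for $\alpha$ large'' does not assemble into an isotopy of the whole surface: the sets $\fix(J_\alpha)$ can accumulate on $F$ in complicated ways, and one must show that the limiting closed set is still \emph{unlinked}, i.e.\ realized as the fixed set of a genuine identity isotopy of $f$. This is precisely the theorem of \cite{BCLR16} (``the closure of an increasing union of unlinked sets is unlinked''), whose proof is long and uses fine surface-topology and hyperbolic-geometry arguments, not a diagonal extraction. The historical record makes the gap concrete: Jaulent followed essentially the plan you describe and, because the limit object cannot in general be realized as an isotopy of $M$, obtained only a weaker conclusion (a closed set $X\subset\fix(f)$ together with an isotopy of $f$ restricted to $M\setminus X$); the statement in the form quoted here required the later, genuinely different techniques of \cite{BCLR16}.

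The second half of your sketch, deducing (iii) from maximality by ``adding'' a fixed point $z$ whose trajectory is null-homotopic in $M\setminus\fix(I)$, is the right idea and is comparatively standard, but as written it is also only a gesture: the ``local surgery on $I$ in a neighborhood of the support of the null-homotopy'' has to produce a continuous family of homeomorphisms agreeing with $f$ at time one and homotopic to $I$ rel $\fix(I)$, and the usual justification goes through lifting $I|_{M\setminus\fix(I)}$ to the universal cover of the domain, observing that the lift fixing the endpoints of the lifted trajectory has a fixed point over $z$, and invoking structural facts about spaces of isotopies with marked fixed points rather than a hand-made modification of the disk of null-homotopy. So the overall architecture (Zorn plus a maximality contradiction) is consistent with the cited literature, but the chain step, which is the heart of the theorem, is not established by your argument and cannot be fixed by a routine limiting procedure.
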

We will say that an identity isotopy $I$ satisfying the conclusion of Theorem \ref{existence maximal isotopy} is a \textit{maximal isotopy}. We note that the last condition of the above theorem can be stated in the following equivalent form:
 \begin{itemize}
    \item[(iii')] if $\widetilde{I}=(\widetilde{f}_t)_{t\in [0,1]}$ is the identity isotopy that lifts $I|_{M\setminus \fix(I)}$ to the universal covering space of $M\setminus \fix(I)$, then $\widetilde{f}_1$ is fixed point free.
  \end{itemize}
The typical example of an isotopy $I$ verifying condition (iii) is the restricted family $I=(f_t)_{t\in [0,1]}$ of a topological flow $(f_t)_{t\in \R}$ on $M$. Indeed, one can lift the flow $(f_t| _{M\setminus \fix(I)})_{t\in \R}$ as a flow $(\widetilde{f}_t)_{t\in \R}$ on the universal covering space of $M\setminus \fix(I)$. This flow has no fixed point and consequently no periodic point. So $\widetilde{f}_1$ is fixed point free, which exactly means that condition (iii) is fulfilled.

\subsubsection{Transversal foliations}

Let $f$ be a homeomorphism that preserves the orientation of the plane $\R^2$. We will say that a line $\lambda$ of $\R^2$ is a \textit{Brouwer line of $f$} if $f(\lambda)\subset L(\lambda)$ and $f^{-1}(\lambda)\subset R(\lambda)$. If $f$ is fixed point free the main result to the Brouwer Theory is the Plane Translation Theorem: every point of the plane lies on a Brouwer line of $f$ (see \cite{brou}). Let us recall now the equivariant foliation version of this theorem due to P. Le Calvez (see \cite{lecalvez2}). Suppose that $f$ is a homeomorphism that is isotopic to the identity on an oriented surface $M$. Let $I$ be a maximal identity isotopy of $f$ and let $\widetilde{I}=(\widetilde{f}_t)_{t\in [0,1]}$ be the identity isotopy that lifts $I$ to the universal covering space $\widetilde{\dom}(I)$ of $\dom(I)$. We recall that the homeomorphism $\widetilde{f}=\widetilde{f}_1$ is fixed point free. Suppose that $\dom(I)$ is connected, in this case $\widetilde{\dom}(I)$ is a plane and we have that there exists a non singular oriented foliation $\widetilde{\mathcal{F}}$ on $\widetilde{\dom(I)}$, invariant by the covering automorphisms, whose leaves are Brouwer lines of $\widetilde{f}$ (see \cite{lecalvez2}). We have the following result, still true in case that $\dom(I)$ is not connected.

\begin{theo}[\cite{lecalvez2}]\label{existence transverse foliation}
  Let $M$ be an oriented surface. Let $f$ be a homeomorphism of $M$ which is isotopic to the identity and let $I$ be a maximal identity isotopy of $f$. Then there exists an oriented singular foliation $\mathcal{F}$ with $\dom(\mathcal{F})=\dom(I)$, such that for every $z\in \dom(I)$ the trajectory $I(z)$ is homotopic, relative to the endpoints, to a positively transverse path to $\mathcal{F}$ and this path is unique defined up to equivalence.
\end{theo}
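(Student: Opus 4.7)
The plan is to pass to the universal cover of $\dom(I)$, invoke an equivariant Brouwer plane translation theorem, and then project back down, verifying along the way that trajectories are homotopic to positively transverse paths.

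First, I would reduce to the case where $\dom(I)$ is connected by working on each connected component separately. Let $\fonc{\widetilde{\pi}}{\widetilde{\dom}(I)}{\dom(I)}$ denote the universal covering map and let $\widetilde{I}=(\widetilde{f}_t)_{t\in [0,1]}$ be the lifted isotopy. By the maximality hypothesis (in the equivalent form (iii')), the time-one map $\widetilde{f}=\widetilde{f}_1$ is an orientation-preserving fixed-point-free homeomorphism of $\widetilde{\dom}(I)$. Since $\widetilde{f}$ has no fixed points, $\widetilde{\dom}(I)$ cannot be a sphere, so it is a plane.

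Next, I would invoke Le Calvez's equivariant version of the Brouwer plane translation theorem, stated in the paragraph just before the theorem: there exists a non-singular oriented foliation $\widetilde{\mathcal{F}}$ of $\widetilde{\dom}(I)$, invariant under the deck transformation group, all of whose leaves are Brouwer lines for $\widetilde{f}$, so that $\widetilde{f}(\lambda)\subset L(\lambda)$ and $\widetilde{f}^{-1}(\lambda)\subset R(\lambda)$ for every leaf $\lambda$. Pushing $\widetilde{\mathcal{F}}$ down by $\widetilde{\pi}$ yields a non-singular oriented foliation on $\dom(I)$; declaring $\sing(\mathcal{F}):=\fix(I)$ gives a singular oriented foliation $\mathcal{F}$ on $M$ with $\dom(\mathcal{F})=\dom(I)$.

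The main step is then to verify the transversality of trajectories. Given $z\in\dom(I)$ with lift $\widetilde{z}$, consider the lifted trajectory $\widetilde{I}(\widetilde{z})$ joining $\widetilde{z}$ to $\widetilde{f}(\widetilde{z})$. Since every leaf of $\widetilde{\mathcal{F}}$ is a Brouwer line, the endpoint $\widetilde{f}(\widetilde{z})$ lies strictly on the left side of the leaf $\phi_{\widetilde{z}}$, and similarly in a coherent way for every intermediate leaf encountered. One then constructs a positively transverse path from $\widetilde{z}$ to $\widetilde{f}(\widetilde{z})$ by a local-to-global argument: on each trivializing box of $\widetilde{\mathcal{F}}$, the transverse direction is well defined and the Brouwer line property guarantees the correct sense of crossing, and these local transverse arcs can be glued into a path homotopic rel endpoints to $\widetilde{I}(\widetilde{z})$. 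Projecting by $\widetilde{\pi}$ yields the required transverse path in $\dom(I)$ homotopic, in $\dom(\mathcal{F})$, to $I(z)$.

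Finally, for uniqueness up to $\mathcal{F}$-equivalence: any two positively transverse paths joining $\widetilde{z}$ to $\widetilde{f}(\widetilde{z})$ that are homotopic rel endpoints must cross exactly the same set of leaves of $\widetilde{\mathcal{F}}$ in the same order, which is precisely the definition of $\widetilde{\mathcal{F}}$-equivalence, and this equivalence descends to $\mathcal{F}$-equivalence in $\dom(\mathcal{F})$. The main obstacle is the construction of the transverse path equivalent to the trajectory: this is the heart of Le Calvez's argument and depends crucially on the Brouwer line property of every leaf together with the maximality of $I$ (without which the lifted time-one map could have fixed points and the plane translation theorem would not apply).
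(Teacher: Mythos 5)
This statement is not proved in the paper at all: it is quoted verbatim from Le Calvez \cite{lecalvez2}, and the paragraph preceding it (lift of a maximal isotopy to $\widetilde{\dom}(I)$, fixed-point-free time-one map $\widetilde f$, existence of an equivariant non-singular foliation by Brouwer lines) is itself only a recollection of that same reference. Your outline follows exactly this standard route, and several of its pieces are sound: the reduction to connected components (each component of $\dom(I)$ is invariant under every $f_t$, hence under $f$), the exclusion of the sphere because $\widetilde f$ is fixed point free, the descent of the equivariant foliation $\widetilde{\mathcal F}$ with $\sing(\mathcal F)=\fix(I)$, and the uniqueness statement, which indeed follows from the criterion recalled in the paper that two compact transverse paths in a non-singular plane foliation are equivalent as soon as the leaves through their endpoints coincide (note that in $\widetilde{\dom}(I)$, which is simply connected, ``homotopic rel endpoints'' is automatic, so the only thing to check is that the two lifts share their endpoints).

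The genuine gap is that the two essential ingredients are assumed rather than proved. First, the existence of the equivariant foliation whose leaves are Brouwer lines of $\widetilde f$ is the main theorem of \cite{lecalvez2} (resting on the non-equivariant version of \cite{lecalvez1}); you cannot ``invoke it from the paragraph just before the theorem'', since that paragraph is citing precisely the result under discussion. Second, even granting that foliation, the step from ``every leaf is a Brouwer line of $\widetilde f$'' to ``for every $\widetilde z$ there is a positively transverse path from $\widetilde z$ to $\widetilde f(\widetilde z)$'' is not obtained by gluing local transverse arcs in trivializing boxes: knowing that $\widetilde f(\widetilde z)\in L(\phi_{\widetilde z})$ and $\widetilde z\in R(\phi_{\widetilde f(\widetilde z)})$ does not by itself produce a transverse path (think of Reeb-type configurations of leaves). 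The actual argument is global: one considers the set of endpoints of positively transverse paths issued from $\widetilde z$, shows it is open with frontier a union of leaves, and uses the Brouwer-line property of those frontier leaves to conclude that $\widetilde f(\widetilde z)$ lies in this set. As written, your ``local-to-global'' sentence papers over exactly the part of Le Calvez's proof that carries the dynamical content, so the proposal is an accurate reduction to the cited literature but not a proof.
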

We will say that a foliation $\mathcal{F}$ satisfying the conclusion of Theorem \ref{existence transverse foliation} is \textit{transverse} to $I$. Observe that if $\check{M}$ is a covering space of $M$ and $\fonc{\check{\pi}}{\check{M}}{M}$ the covering projection, a foliation $\mathcal{F}$ transverse to a maximal identity isotopy $I$ lifts to a foliation $\check{\mathcal{F}}$ transverse to the lifted isotopy $\check{I}$.\\

Given $z\in M$ we will write $I_{\mathcal{F}}(z)$ for the class of paths that are positively transverse to $\mathcal{F}$, that join $z$ to $f(z)$ and that are homotopic in $\dom(\mathcal{F})$ to $I(z)$, relative to the endpoints. We will also use the notation $I_{\mathcal{F}}(z)$ for every path in this class and we will called it the transverse trajectory of $z$. More generally, for every integer $n\geq 1$ we can define $I^n_{\mathcal{F}}(z)=\prod_{0\leq k<n} I(f^k(z))$ by concatenation, that is either a transverse path passing through the points $z$, $f(z)$, $\cdots$, $f^n(z)$, or a set of such paths. We will also use the following notations

$$ I^{\N}_{\mathcal{F}}(z)= \prod_{k\in \N} I(f^k(z)), \quad I^{-\N}_{\mathcal{F}}(z)= \prod_{k\in \N} I(f^{-k}(z)), \quad I^{\Z}_{\mathcal{F}}(z)= \prod_{k\in \Z} I(f^k(z)). $$
The last path will be called the \textit{whole transverse trajectory of $z$}.\\

If $z$ is a periodic point of $f$ of period $q$, there exists a transverse loop $\Gamma$ whose natural lift $\gamma$ satisfies $\gamma|_{[0,1]}=I_{\mathcal{F}}^q(z)$. We will say that a transverse loop \textit{is associated to $z$} if it is $\mathcal{F}$-equivalent to $\Gamma$. We note that this definition does not depend on the choices of the trajectory $I_{\mathcal{F}}(f^k(z))$, with $0\leq k<q$.\\

 Let us state the following result that will be useful later.

\begin{leem}[\cite{lct}]\label{lemma10LCT}
  Fix $z\in \dom(I)$, an integer $n\geq 1$, and parameterize $I^n_{\mathcal{F}}(z)$ by $[0,1]$. For every $0<a<b<1$, there exists a neighborhood $V$ of $z$ such that for every $z'$ in $V$, the path $I^n_{\mathcal{F}}(z)|_{[a,b]}$ is equivalent to a subpath of $I^n_{\mathcal{F}}(z')$. Moreover, there exists a neighborhood $W$ of $z$ such that for every $z'$ and $z''$ in $W$, the path $I^n_{\mathcal{F}}(z')$ is equivalent to a subpath of $I^{n+2}_{\mathcal{F}}(f^{-1}(z''))$.
\end{leem}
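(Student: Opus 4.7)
The plan is to work in the universal cover $\widetilde{\dom}(I) = \widetilde{\dom}(\mathcal{F})$ (each component of which is a plane, on which $\widetilde{\mathcal{F}}$ is non-singular), so that the planar criterion recalled in the preliminaries applies: two compact transverse paths sharing initial and terminal leaves are automatically $\widetilde{\mathcal{F}}$-equivalent. The main ingredients will be continuity of the lifted isotopy $\widetilde{I}$ and trivializing charts.

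For the first claim, I will fix a lift $\widetilde{z}$ of $z$ and let $\widetilde{\gamma}$ be the lift of $I^n_{\mathcal{F}}(z)$ starting at $\widetilde{z}$. Set $\widetilde{p}=\widetilde{\gamma}(a)$, $\widetilde{q}=\widetilde{\gamma}(b)$, and choose trivializing charts $U_p\ni\widetilde{p}$ and $U_q\ni\widetilde{q}$ small enough that $\widetilde{\phi}_{\widetilde{p}}$ meets $U_p$ in a single vertical segment dividing $U_p$ into two halves, with $\widetilde{\gamma}$ entering $U_p$ through the left half and exiting through the right (possible by transversality); analogously for $U_q$. For $z'$ in a small neighborhood $V$ of $z$, pick the lift $\widetilde{z}'$ close to $\widetilde{z}$. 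By continuity of $\widetilde{I}^n$ (the composition of the lifted isotopy along the orbit), the raw trajectory $\widetilde{I}^n(\widetilde{z}')$ is $C^0$-close to $\widetilde{I}^n(\widetilde{z})$ on $[0,1]$, hence enters $U_p$ and $U_q$ near times $a$ and $b$, starting on the left of $\widetilde{\phi}_{\widetilde{p}}$ and ending on its right (resp.\ for $U_q$). The transverse trajectory $\widetilde{\gamma}'$ associated with $\widetilde{z}'$ is homotopic to $\widetilde{I}^n(\widetilde{z}')$ rel endpoints in $\widetilde{\dom}(\mathcal{F})$, and since $\widetilde{\phi}_{\widetilde{p}}$ is a line separating the plane, the homotopy class forces $\widetilde{\gamma}'$ to cross $\widetilde{\phi}_{\widetilde{p}}$ at some parameter $a'$ and $\widetilde{\phi}_{\widetilde{q}}$ at some $b'>a'$. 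The sub-paths $\widetilde{\gamma}'|_{[a',b']}$ and $\widetilde{\gamma}|_{[a,b]}$ then share initial and terminal leaves, so the planar criterion gives $\widetilde{\mathcal{F}}$-equivalence; projecting down proves Part~1.

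For the second claim, I will apply Part~1 to the longer trajectory $I^{n+2}_{\mathcal{F}}(f^{-1}(z))$, parameterized by $[0,1]$, which passes through $z$ at time $t_1=1/(n+2)$ and through $f^n(z)$ at time $t_2=(n+1)/(n+2)$. Fix $a\in(0,t_1)$ and $b\in(t_2,1)$ close enough to $t_1,t_2$ that the endpoint leaves of $I^{n+2}_{\mathcal{F}}(f^{-1}(z))|_{[a,b]}$ lie in small trivializing charts about $z$ and $f^n(z)$ respectively. Now shrink a neighborhood $W_0$ of $z$ so that, for every $z'\in W_0$, the leaves $\phi_{z'}$ and $\phi_{f^n(z')}$ fall strictly between these endpoint leaves in the local chart order; shrink further so that the conclusion of Part~1 holds with these choices. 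Then for $z''\in W_0$, the trajectory $I^{n+2}_{\mathcal{F}}(f^{-1}(z''))$ contains a subpath $\mathcal{F}$-equivalent to $I^{n+2}_{\mathcal{F}}(f^{-1}(z))|_{[a,b]}$, which in particular crosses $\phi_{z'}$ and $\phi_{f^n(z')}$ for every $z'\in W_0$. The enclosed sub-subpath, lifted compatibly to the universal cover, shares initial and terminal leaves with the lifted $I^n_{\mathcal{F}}(z')$, and the planar criterion produces the desired equivalence.

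The main obstacle I expect is the separation step in Part~1: showing that the transverse trajectory $\widetilde{\gamma}'$ really crosses the specific leaves $\widetilde{\phi}_{\widetilde{p}},\widetilde{\phi}_{\widetilde{q}}$, rather than merely nearby leaves. Since each such leaf is a line separating the plane, this boils down to showing that the rel-endpoints homotopy class of $\widetilde{I}^n(\widetilde{z}')$ in $\widetilde{\dom}(\mathcal{F})$ takes the left side of the leaf to the right. This requires that the endpoints $\widetilde{z}'$ and $\widetilde{f}^n(\widetilde{z}')$ stay on the correct sides of $\widetilde{\phi}_{\widetilde{p}}$ and $\widetilde{\phi}_{\widetilde{q}}$, which is guaranteed by the chart setup; the remainder of the argument then uses that any transverse representative of this homotopy class must cross these separating lines.
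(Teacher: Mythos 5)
Your overall architecture is the right one, and essentially the standard one for this lemma (which the paper itself does not prove, citing \cite{lct}): pass to the universal cover of $\dom(I)$, where lifted leaves are lines separating the plane, show that the lifted transverse trajectory of a nearby point is forced to cross the two leaves $\widetilde{\phi}_{\widetilde{\gamma}(a)}$, $\widetilde{\phi}_{\widetilde{\gamma}(b)}$, and conclude with the criterion that two compact transverse paths of a plane with the same first and last leaf are equivalent; likewise, deducing the second assertion by applying the first one to $I^{n+2}_{\mathcal{F}}(f^{-1}(z))$, through which $z$ and $f^{n}(z)$ pass at interior times, is the correct mechanism. However, the justification of the crucial crossing step in Part~1 is garbled. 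The points $\widetilde{p}=\widetilde{\gamma}(a)$ and $\widetilde{q}=\widetilde{\gamma}(b)$ lie on the \emph{transverse} trajectory, not on the isotopy trajectory, so the claim that $\widetilde{I}^n(\widetilde{z}')$ is $C^0$-close to $\widetilde{I}^n(\widetilde{z})$ and ``hence enters $U_p$ and $U_q$ near times $a$ and $b$'' is false in general, and the later assertion that the endpoints lie on the correct sides ``by the chart setup'' is not justified by charts around $\widetilde p,\widetilde q$ at all. What actually does the work is simpler and should replace this: a transverse path meets each leaf at most once, so $\widetilde{\gamma}$ meets $\widetilde{\phi}_{\widetilde{p}}$ only at the interior time $a$, whence $\widetilde{z}\in R(\widetilde{\phi}_{\widetilde{p}})$ and $\widetilde{f}^n(\widetilde{z})\in L(\widetilde{\phi}_{\widetilde{p}})$ strictly; these are open conditions and $\widetilde{z}'\mapsto(\widetilde{z}',\widetilde{f}^n(\widetilde{z}'))$ is continuous, which gives the endpoint condition for all $z'$ in a suitable $V$ (same for $\widetilde{\phi}_{\widetilde{q}}$). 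The same single-crossing remark also yields the order $a'<b'$, which you assert without argument: $\widetilde{\phi}_{\widetilde{q}}\subset L(\widetilde{\phi}_{\widetilde{p}})$, and before time $a'$ the path $\widetilde{\gamma}'$ is still in $R(\widetilde{\phi}_{\widetilde{p}})$, so it cannot yet have met $\widetilde{\phi}_{\widetilde{q}}$.

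In Part~2 there is a similar imprecision: requiring $\phi_{z'}$ and $\phi_{f^n(z')}$ to lie ``strictly between the endpoint leaves in the local chart order'' does not by itself guarantee that the base path $I^{n+2}_{\mathcal{F}}(f^{-1}(z))|_{[a,b]}$ crosses $\phi_{z'}$: for $z'$ on the far side of $z$ the path may leave the chart before sweeping past the plaque of $\phi_{z'}$, and for two disjoint lines, $\lambda_1\subset L(\lambda_0)$ does not determine on which side of $\lambda_1$ the point $\lambda_0$-endpoints lie. The repair is routine but should be stated: choose $W$ so that its chart coordinates (and those of $f^n(W)$) lie inside the window of coordinates actually swept by the base path while it remains in the charts around its passages through $z$ and $f^n(z)$; this window depends only on the base path, hence is uniform in $z'$, and one must also take $W\subset f(V)$ with $V$ given by Part~1 at $f^{-1}(z)$, so a single $W$ works for all pairs $z',z''$. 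With these two corrections (and keeping track, as you indicate, that all crossings occur in lifted charts so that the specific lifts $\widetilde{\phi}_{\widetilde{z}'}$, $\widetilde{\phi}_{\widetilde{f}^n(\widetilde{z}')}$ are the ones met), your argument goes through and coincides with the argument of \cite{lct}.
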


An immediate consequence of the previous lemma is the fact that if $z$ in $\dom(I)$ is positively recurrent, negatively recurrent or bi-recurrent, then the whole transverse trajectory of $z$, $I^{\Z}_{\mathcal{F}}(z)$ is $\mathcal{F}$-positively recurrent, $\mathcal{F}$-negatively recurrent or $\mathcal{F}$-bi-recurrent respectively.

\subsubsection{Admissible paths}

We will say that a transverse path $\fonc{\gamma}{[a,b]}{\dom(I)}$  is \textit{admissible of order $n$} ($n$ is an integer larger than $1$) if it is equivalent to a path $I^{n}_{\mathcal{F}}(z)$, $z$ in $\dom(I)$. It means that if $\fonc{\widetilde{\gamma}}{[a,b]}{\widetilde{\dom}(I)}$ is a lift of $\gamma$, then there exists a point $\widetilde{z}$ in $\widetilde{\dom}(I)$ such that $\widetilde{z}\in \phi_{\widetilde{\gamma}(a)}$ and $\widetilde{f}^n(\widetilde{z})\in \phi_{\widetilde{\gamma}(b)}$, or equivalently, that
$$    \widetilde{f}^n(\phi_{\widetilde{\gamma}(a)})\cap \phi_{\widetilde{\gamma}(b)} \neq \emptyset.  $$

We note that if $f$ preserves a Borel probability measure of full support, then the set of bi-recurrent points is dense in $M$. It follows from Lemma \ref{lemma10LCT} that every admissible transverse path is equivalent to a subpath of a bi-recurrent one.\\

We will say that a transverse path $\fonc{\gamma}{[a,b]}{\dom(I)}$ is \textit{admissible of order $\leq n$} ($n$ is an integer larger than $1$) if it is a subpath of an admissible path of order $n$. More generally, we will say that a transverse path $\fonc{\gamma}{J}{\dom(I)}$ defined on an interval of $\R$ is \textit{admissible} if for every segment $[a,b]\subset J$, there exists an integer $n\geq 1$ such that $\gamma|_{[a,b]}$ is admissible of order $\leq n$. Similarly, we will say that a transverse loop $\Gamma$ is admissible if its natural lift is admissible. The following lemma follows from Proposition 19 of \cite{lct} and states that for transverse paths with a $\mathcal{F}$-transverse self-intersection there is no difference between being of order $\leq n$ and being of order $n$.

\begin{leem}\label{Proposition19LCT}
  Let $\fonc{\gamma}{[a,b]}{\dom(I)}$ be a transverse path with a $\mathcal{F}$-transverse self-intersection. If $\gamma$ is admissible of order $\leq n$, then $\gamma$ is admissible of order $n$.
\end{leem}

The fundamental proposition (Proposition 20 from \cite{lct}) is a result about maximal isotopies and transverse foliations that permits us to construct new admissible paths from a pair of admissible paths.

\begin{prop}[\cite{lct}]\label{proposition20LCT}
  Suppose that $\fonc{\gamma_1}{[a_1,b_1]}{M}$ and $\fonc{\gamma_2}{[a_2,b_2]}{M}$ are two transverse paths that intersect $\mathcal{F}$-transversally at
  $\gamma_1(t_1)=\gamma_2(t_2)$. If $\gamma_1$ is admissible of order $n_1$ and $\gamma_2$ is admissible of order $n_2$, then the paths
  $\gamma_1|_{[a_1,t_1]}\gamma_2|_{[t_2,b_2]}$ and $\gamma_2|_{[a_2,t_2]}\gamma_1|_{[t_1,b_1]}$ are admissible of order $n_1+n_2$.
\end{prop}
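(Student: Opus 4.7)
The plan is to pass to the universal cover $\widetilde{\dom}(I)$, where each leaf of the lifted foliation $\widetilde{\mathcal{F}}$ is a Brouwer line for the fixed-point free lift $\widetilde f$. Choose lifts $\widetilde\gamma_1$, $\widetilde\gamma_2$ of $\gamma_1$, $\gamma_2$ such that $\widetilde\gamma_1(t_1)=\widetilde\gamma_2(t_2)$ lie on a common leaf $\widetilde\phi$ where the $\widetilde{\mathcal{F}}$-transverse intersection takes place. From admissibility there exist lifts $\widetilde z_i\in\phi_{\widetilde\gamma_i(a_i)}$ with $\widetilde f^{n_i}(\widetilde z_i)\in\phi_{\widetilde\gamma_i(b_i)}$. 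By symmetry it is enough to prove the claim for $\gamma_1|_{[a_1,t_1]}\gamma_2|_{[t_2,b_2]}$, and for that it suffices to produce $\widetilde w\in\phi_{\widetilde\gamma_1(a_1)}$ with $\widetilde f^{n_1+n_2}(\widetilde w)\in\phi_{\widetilde\gamma_2(b_2)}$; equivalently, to show that the sets
$$A:=\widetilde f^{n_1}(\phi_{\widetilde\gamma_1(a_1)})\quad\text{and}\quad B:=\widetilde f^{-n_2}(\phi_{\widetilde\gamma_2(b_2)})$$
meet.

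Both $A$ and $B$ are continuous images of lines, hence connected. The Brouwer-line property of leaves places $A$ inside $L(\phi_{\widetilde\gamma_1(a_1)})$ and $B$ inside $R(\phi_{\widetilde\gamma_2(b_2)})$; by construction $A$ meets $\phi_{\widetilde\gamma_1(b_1)}$ while $B$ meets $\phi_{\widetilde\gamma_2(a_2)}$. The $\widetilde{\mathcal{F}}$-transverse intersection at $\widetilde\phi$ fixes the relative positions of the four endpoint leaves around $\widetilde\phi$: the inclusions $\phi_{\widetilde\gamma_1(a_1)}\subset L(\phi_{\widetilde\gamma_2(a_2)})$ and $\phi_{\widetilde\gamma_1(b_1)}\subset R(\phi_{\widetilde\gamma_2(b_2)})$ recalled in the preliminaries, combined with the ``below/above relative to $\widetilde\phi$'' relations, give the crossing pattern that will force $A\cap B\neq\emptyset$.

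The core step is a planar separation argument. I would carve out, from small subarcs of $\widetilde\gamma_1$, $\widetilde\gamma_2$ on either side of $\widetilde\phi$ together with appropriate half-leaves of the four endpoint leaves, the boundary of a topological disk $D$ straddling $\widetilde\phi$. A Jordan-type argument combined with Lemma \ref{intersection transverse to a line}, applied to each Brouwer-line leaf regarded as an $\widetilde{\mathcal{F}}$-transverse line, shows that any connected set joining $\phi_{\widetilde\gamma_1(a_1)}$ to $\phi_{\widetilde\gamma_1(b_1)}$ while staying in $L(\phi_{\widetilde\gamma_1(a_1)})$ must cross any connected set joining $\phi_{\widetilde\gamma_2(b_2)}$ to $\phi_{\widetilde\gamma_2(a_2)}$ while staying in $R(\phi_{\widetilde\gamma_2(b_2)})$; in particular $A$ and $B$ meet. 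Taking $\widetilde w_\ast\in A\cap B$ and setting $\widetilde w=\widetilde f^{-n_1}(\widetilde w_\ast)$ then yields the required starting lift.

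The main difficulty is precisely this planar topology step: one must upgrade the purely local combinatorial configuration defining $\mathcal{F}$-transverse intersection into a global linking statement between the iterated leaves $A$ and $B$, ruling out the possibility that they slip past each other. The remainder is bookkeeping: verifying that $I^{n_1+n_2}_{\mathcal{F}}(\check\pi(\widetilde w))$ visits the leaves in the order prescribed by $\gamma_1|_{[a_1,t_1]}\gamma_2|_{[t_2,b_2]}$, which follows by matching leaves using the Brouwer-line monotonicity of $\widetilde f$ along each of the two halves. Here the positivity (rather than negativity) of the $\mathcal{F}$-transverse intersection is essential, as it ensures the two halves glue into a positively transverse path equivalent to the concatenation.
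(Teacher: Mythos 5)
This proposition is quoted by the paper from \cite{lct} (it is Proposition 20 there, the ``fundamental proposition'' of the forcing theory), so the paper itself contains no proof to compare against; your proposal must therefore stand on its own, and it does not. Your reduction is fine: with lifts chosen so that $\widetilde{\gamma}_1(t_1)=\widetilde{\gamma}_2(t_2)$, and using the fact recalled in the preliminaries that in the foliated plane two transverse paths on compact intervals with the same initial and final leaves are equivalent, admissibility of the concatenation of order $n_1+n_2$ is indeed equivalent to $\widetilde{f}^{n_1}\bigl(\phi_{\widetilde{\gamma}_1(a_1)}\bigr)\cap \widetilde{f}^{-n_2}\bigl(\phi_{\widetilde{\gamma}_2(b_2)}\bigr)\neq\emptyset$ (this also makes your final ``bookkeeping'' step, and the claim that positivity of the intersection is essential, unnecessary: the statement holds for either sign). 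But the entire content of the proposition is the intersection claim $A\cap B\neq\emptyset$, and at exactly that point you substitute a description of what is needed (``a global linking statement'') for an argument. The disk $D$ is never constructed, it is never verified that $A$ and $B$ cannot leave it except through prescribed boundary pieces, and the crossing principle you invoke --- that any connected set joining $\phi_{\widetilde{\gamma}_1(a_1)}$ to $\phi_{\widetilde{\gamma}_1(b_1)}$ inside $L\bigl(\phi_{\widetilde{\gamma}_1(a_1)}\bigr)$ must meet any connected set joining $\phi_{\widetilde{\gamma}_2(b_2)}$ to $\phi_{\widetilde{\gamma}_2(a_2)}$ inside $R\bigl(\phi_{\widetilde{\gamma}_2(b_2)}\bigr)$ --- is not a consequence of plane topology plus the above/below configuration alone: two lines satisfying only these constraints can slip past each other, e.g.\ by recrossing $\phi$ or leaves met by $\widetilde{\gamma}_1$, $\widetilde{\gamma}_2$ between the endpoints.

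What is missing is precisely the dynamical input that makes Le Calvez--Tal's proof work: every leaf met by the two paths (not just the four endpoint leaves and $\phi$) is a Brouwer line for $\widetilde{f}$, so the images $\widetilde{f}^{k}\bigl(\phi_{\widetilde{\gamma}_1(a_1)}\bigr)$ are confined to the correct sides of all of these lines simultaneously, and one must run a case analysis (e.g.\ according to whether $\widetilde{f}^{n_1}\bigl(\phi_{\widetilde{\gamma}_1(a_1)}\bigr)$ already meets $\phi$ or $\phi_{\widetilde{\gamma}_2(b_2)}$) before any separation argument of the type you sketch, together with Lemma \ref{intersection transverse to a line}, can be brought to bear. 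As written, the proposal identifies the crux correctly but leaves it unproved, so it is a plan rather than a proof.
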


One deduces immediately the following result, Corollary 22 of \cite{lct}.

\begin{leem}[\cite{lct}]\label{corollary22LCT}
  Let $\fonc{\gamma_i}{[a_i,b_i]}{M}$, $1\leq i \leq r$, be a family of $r\geq 2$ transverse paths. We suppose that for every $i\in \{1,\cdots, r\}$ there exist $s_i\in [a_i,b_i]$ and $t_i\in [a_i,b_i]$ such that:
  \begin{itemize}
    \item[(i)] $\gamma_i|_{[s_i, b_i]}$ and $\gamma_{i+1}|_{[a_{i+1},t_{i+1}]}$ intersect $\mathcal{F}$-transversally and positively at $\gamma_i(t_i)=\gamma_{i+1}(s_{i+1})$ if $i<r$;
    \item[(ii)] one has $s_1=a_1<t_1<b_1$, $a_r<s_r<t_r=b_r$ and $a_i<s_i<t_i<b_i$ if $1<i<r$;
    \item[(iii)] $\gamma_i$ is admissible of order $n_i$.
  \end{itemize}
Then $\prod_{1\leq i \leq r} \gamma_i|_{[s_i,t_i]}$ is admissible of order $\sum_{1\leq i \leq r} n_i$.
\end{leem}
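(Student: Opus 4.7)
The lemma is stated as a direct iteration of Proposition \ref{proposition20LCT}, so the plan is to proceed by induction, absorbing one $\gamma_{i+1}$ at a time into a growing admissible path. The main tool beyond Proposition \ref{proposition20LCT} is the extension property for $\mathcal{F}$-transverse intersections recalled in the preliminaries: if two sub-paths intersect $\mathcal{F}$-transversally at a point lying in the interior of both parameter sub-intervals, the enclosing paths also intersect $\mathcal{F}$-transversally at that point.

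Rather than inducting directly on $r$, I would prove the slightly more flexible intermediate claim that, for each $k\in\{1,\dots,r-1\}$, the path
\[
  P_k \;:=\; \Bigl(\prod_{1\le i\le k}\gamma_i|_{[s_i,t_i]}\Bigr)\cdot\gamma_{k+1}|_{[s_{k+1},b_{k+1}]}
\]
is admissible of order $\sum_{i=1}^{k+1}n_i$. The conclusion of the lemma is then the $k=r-1$ case, upon rewriting $\gamma_r|_{[s_r,b_r]}$ as $\gamma_r|_{[s_r,t_r]}$ using $t_r=b_r$ from hypothesis (ii).

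The base case $k=1$ is direct: by (i) with $i=1$, the sub-paths $\gamma_1|_{[s_1,b_1]}=\gamma_1$ (using $s_1=a_1$) and $\gamma_2|_{[a_2,t_2]}$ meet $\mathcal{F}$-transversally at $\gamma_1(t_1)=\gamma_2(s_2)$; since $s_2$ is interior to $[a_2,b_2]$ in both the $r=2$ and $r\ge 3$ situations, the extension property upgrades this to an $\mathcal{F}$-transverse intersection between $\gamma_1$ and $\gamma_2$, and Proposition \ref{proposition20LCT} hands us $P_1=\gamma_1|_{[a_1,t_1]}\gamma_2|_{[s_2,b_2]}$ admissible of order $n_1+n_2$. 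For the inductive step $k\to k+1$ (with $k+1<r$), the tail of $P_k$ is $\gamma_{k+1}|_{[s_{k+1},b_{k+1}]}$, and hypothesis (i) at $i=k+1$ says it meets $\gamma_{k+2}|_{[a_{k+2},t_{k+2}]}$ $\mathcal{F}$-transversally at $\gamma_{k+1}(t_{k+1})=\gamma_{k+2}(s_{k+2})$. Condition (ii) places $t_{k+1}$ in the interior of $[s_{k+1},b_{k+1}]$ and $s_{k+2}$ in the interior of $[a_{k+2},b_{k+2}]$, so the extension property again lifts this to an $\mathcal{F}$-transverse intersection between $P_k$ and $\gamma_{k+2}$. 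Applying Proposition \ref{proposition20LCT}, the pre-intersection half of $P_k$, which is $\prod_{1\le i\le k+1}\gamma_i|_{[s_i,t_i]}$, concatenated with $\gamma_{k+2}|_{[s_{k+2},b_{k+2}]}$, is admissible of order $\sum_{i=1}^{k+2}n_i$; this is exactly $P_{k+1}$.

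I do not foresee any serious obstacle: the argument is essentially mechanical once the intermediate claim is isolated. The only thing to watch is the bookkeeping of which parameter point lies in the interior of which sub-interval so that the extension property can be invoked legitimately, but conditions (ii), namely $s_1=a_1$, $t_r=b_r$ and $a_i<s_i<t_i<b_i$ for $1<i<r$, are tailored exactly so that this is automatic at every step while producing the advertised output concatenation.
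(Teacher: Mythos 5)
Your induction is correct and is exactly the intended derivation: the paper presents this lemma as an immediate consequence of Proposition \ref{proposition20LCT}, obtained by absorbing the paths one at a time and using the fact that an $\mathcal{F}$-transverse intersection of subpaths persists for the ambient paths. The bookkeeping you carry out (using $s_1=a_1$, $t_r=b_r$ and the interior placement of the intermediate $s_i,t_i$ from hypothesis (ii)) is precisely what makes each application of Proposition \ref{proposition20LCT} produce the advertised concatenation with order $\sum_i n_i$.
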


The following result is a consequence of Proposition 23 from \cite{lct}.

\begin{leem}[\cite{lct}]\label{corollary24LCT}
  Let $\fonc{\gamma}{[a,b]}{M}$ be a transverse path admissible of order $n$. Then there exists $\fonc{\gamma'}{[a,b]}{M}$ a transverse path, also admissible of order $n$, such that $\gamma'$ has no $\mathcal{F}$-transverse self-intersection and $\phi_{\gamma(a)}=\phi_{\gamma'(a)}$, $\phi_{\gamma(b)}=\phi_{\gamma'(b)}$.
\end{leem}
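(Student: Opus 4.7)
The lemma is announced as a consequence of Proposition~23 of \cite{lct}, so my plan is to describe how to iterate a reduction step extracted from that proposition. If $\gamma$ already has no $\mathcal{F}$-transverse self-intersection, set $\gamma'=\gamma$. Otherwise, lift $\gamma$ to $\widetilde{\gamma}\colon[a,b]\to\widetilde{\dom}(\mathcal{F})$; by definition of a $\mathcal{F}$-transverse self-intersection there is a non-trivial covering automorphism $T$ such that $\widetilde{\gamma}$ and $T\widetilde{\gamma}$ intersect $\widetilde{\mathcal{F}}$-transversally at some leaf $\widetilde{\phi}=\phi_{\widetilde{\gamma}(s)}=\phi_{T\widetilde{\gamma}(t)}$.

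\textbf{Reduction step.} The idea is to ``uncross'' $\gamma$ at this self-intersection by swapping branches in the universal cover: form the path that follows $\widetilde{\gamma}|_{[a,s]}$ and then, after applying $T^{-1}$, continues along $\widetilde{\gamma}|_{[t,b]}$, gluing the two pieces using their common leaf $\widetilde{\phi}$ (up to transverse equivalence). Its projection $\gamma_{1}$ to $M$ is a transverse path with $\phi_{\gamma_{1}(a)}=\phi_{\gamma(a)}$ and $\phi_{\gamma_{1}(b)}=\phi_{\gamma(b)}$, i.e.\ the endpoint leaves are preserved. The content we need to borrow from Proposition~23 of \cite{lct} is that this swap can be performed in such a way that $\gamma_{1}$ is again admissible \emph{of order $n$}, rather than of order $2n$ as a naive application of the fundamental proposition (Proposition~\ref{proposition20LCT}) to the pair $(\widetilde{\gamma},T\widetilde{\gamma})$ would produce. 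Morally, this uses the fact that both pieces come from the same orbit segment realizing $\widetilde{f}^{n}(\phi_{\widetilde{\gamma}(a)})\cap\phi_{\widetilde{\gamma}(b)}\neq\emptyset$, so that equivariance lets one re-index the iterates after the swap to still cover exactly $n$ steps.

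\textbf{Iteration, termination, and main obstacle.} Iterate the reduction: at each step the admissibility order stays equal to $n$, the endpoint leaves stay equal to $\phi_{\gamma(a)}$ and $\phi_{\gamma(b)}$, and a finite non-negative complexity invariant (for instance, the number of covering automorphisms $T$ for which the current lift and its $T$-translate still intersect $\widetilde{\mathcal{F}}$-transversally, counted up to suitable equivalence, which is bounded because admissible paths of a fixed order cross only finitely many leaves per fundamental domain) strictly decreases. After finitely many reductions we reach the desired $\gamma'$ with no $\mathcal{F}$-transverse self-intersection. The main obstacle is precisely the order-preservation in the reduction step: keeping $n$ fixed under the swap is the non-trivial content of Proposition~23 of \cite{lct} and requires exploiting the finer equivariant structure of maximal isotopies beyond the sub-additivity of Proposition~\ref{proposition20LCT}.
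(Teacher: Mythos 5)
The paper itself offers no proof of this statement: it is imported wholesale as Corollary~24 of \cite{lct}, with the one-line remark that it ``is a consequence of Proposition~23 from \cite{lct}''. So the real comparison is with the derivation in \cite{lct}. Your reduction step — uncrossing one $\mathcal{F}$-transverse self-intersection by the swap $\widetilde{\gamma}|_{[a,s]}$ followed by the appropriate translate of $\widetilde{\gamma}|_{[t,b]}$, while keeping the admissibility order equal to $n$ instead of the $2n$ given by Proposition~\ref{proposition20LCT} — is indeed exactly the content of Proposition~23 of \cite{lct}, and deferring that step to the reference is consistent with how the present paper treats the lemma. (Minor points: with your normalization $\widetilde{\phi}=\phi_{\widetilde{\gamma}(s)}=\phi_{T\widetilde{\gamma}(t)}$ the second branch should be $T\widetilde{\gamma}|_{[t,b]}$, or equivalently $T^{-1}\widetilde{\gamma}|_{[a,s]}$ followed by $\widetilde{\gamma}|_{[t,b]}$; and since one only knows $\phi_{\gamma(s)}=\phi_{\gamma(t)}$, the concatenation must be taken up to equivalence or after replacing $\gamma$ by an equivalent path with $\gamma(s)=\gamma(t)$.)

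The genuine gap is the iteration. First, the finiteness justification you give is incorrect: a transverse path meets uncountably many leaves (an interval of leaves in every trivializing chart), so ``finitely many leaves per fundamental domain'' is false. What is true is that only finitely many covering automorphisms $T$ can produce an $\widetilde{\mathcal{F}}$-transverse intersection of $\widetilde{\gamma}$ with $T\widetilde{\gamma}$, because such an intersection forces an actual intersection point of the two compact images and the deck action on $\widetilde{\dom}(\mathcal{F})$ is properly discontinuous. Second, and more seriously, you assert but do not prove that one uncrossing strictly decreases this count. After the swap, the new path reuses long subarcs of $\widetilde{\gamma}$ and of $T\widetilde{\gamma}$, so it may perfectly well still intersect its $T$-translate (or translates by products and conjugates of the previously occurring automorphisms) $\widetilde{\mathcal{F}}$-transversally; moreover a transverse path can have infinitely many self-intersection parameter pairs $(s,t)$, so ``remove them one at a time'' does not terminate without an additional idea — for instance an extremal choice of the self-intersection, or replacing the naive induction by a minimality argument within the family of order-$n$ admissible paths joining $\phi_{\gamma(a)}$ to $\phi_{\gamma(b)}$. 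As written, termination is precisely the part of the statement that does not follow formally from Proposition~23 of \cite{lct}, and it is the part your argument leaves unproved.
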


\subsubsection{Realizability of linearly admissible loops}

Let $\Gamma$ be a $\mathcal{F}$-transverse loop and let $\fonc{\gamma}{\R}{M}$ be its natural lift. We will say that $\Gamma$ is \textit{linearly admissible of order $q$} ($q$ is an integer larger than $1$) if it satisfies the following property (note that every equivalent loop will satisfy the same property): \\
$(Q_q)$ : there exist two sequences $\suii{r}{k}$ and $\suii{s}{k}$ of natural integers satisfying
$$  \lim_{k\to +\infty} r_k = \lim_{k\to +\infty} s_k=+\infty, \quad  \limsup_{k\to +\infty} r_k/s_k \geq 1/q          $$
such that for every integer $k\geq 1$,  $\gamma|_{[0,r_k]}$ is admissible of order $\leq s_k$.

In \cite{lct} the authors proved that in many situations the existence of a transverse loop that satisfies property $(Q_q)$ implies the existence of infinitely many periodic orbit. In our setting we can state their result as follows.

\begin{prop}[Proposition 26 of \cite{lct}]\label{proposition26LCT}
  Let $\Gamma$ be a linearly admissible transverse loop of order $q\geq 1$ that has a $\mathcal{F}$-transverse self-intersection. Then for every rational number $r/s\in (0,1/q]$, written in an irreducible way, the loop $\Gamma^r$ is associated to a periodic orbit of period $s$.
\end{prop}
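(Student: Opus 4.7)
The plan is to upgrade the approximate admissibility information coming from property $(Q_q)$, via iteration of the $\mathcal{F}$-transverse self-intersection of $\Gamma$ and the forcing machinery of Proposition \ref{proposition20LCT}, into an exact admissibility statement that produces a periodic orbit realizing $\Gamma^r$ as an associated loop. Let $\gamma$ be the natural lift of $\Gamma$, lift it to $\tilde\gamma$ in the universal cover $\widetilde{\dom}(\mathcal{F})$, and let $T$ be the covering automorphism with $\tilde\gamma(t+1)=T(\tilde\gamma(t))$. The $\mathcal{F}$-transverse self-intersection hypothesis yields, after reindexing, that $\tilde\gamma$ and $T(\tilde\gamma)$ intersect $\widetilde{\mathcal{F}}$-transversally.

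First I would iterate the fundamental proposition. Repeated application of Proposition \ref{proposition20LCT} through Lemma \ref{corollary22LCT}, along the successive transverse intersections of $T^i(\tilde\gamma)$ with $T^{i+1}(\tilde\gamma)$, lets me concatenate $r$ shifted copies of the admissible sub-path $\gamma|_{[0,r_k]}$ (of order $\leq s_k$ by $(Q_q)$), producing admissible transverse paths in the universal cover whose endpoints are related by $T^{r}$ and whose order is $\leq r s_k$. Lemma \ref{Proposition19LCT} promotes ``order $\leq n$'' to ``order $n$'' because $\Gamma$ has a transverse self-intersection. Using $\limsup r_k/s_k\geq 1/q\geq r/s$ together with the freedom, again via Lemma \ref{Proposition19LCT}, to trim these long concatenations to sub-intervals of prescribed length without changing the order identity, I would adjust to hit the ratio $r/s$ exactly: for every sufficiently large multiple $N$ of $s$, I obtain an admissible transverse sub-path of the natural lift of $\Gamma^r$ of order exactly $N$ whose lift joins a leaf $\phi$ to $T^{rN/s}(\phi)$. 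Setting $N=s$ in particular gives $\tilde f^{s}(\phi)\cap T^r(\phi)\ne\emptyset$.

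The final step is to convert this leaf-intersection into an actual fixed point of $T^{-r}\circ \tilde f^{s}$. The admissibility of the closed transverse loop above implies that the $(T^{-r}\circ \tilde f^{s})$-orbit of any point of $\phi\cap \tilde f^{-s}(T^r(\phi))$ closes up into a transverse cycle in the foliation; since the leaves of $\widetilde{\mathcal{F}}$ are Brouwer lines for $\tilde f$, the contrapositive of the plane translation theorem applied to $T^{-r}\circ \tilde f^{s}$ on the invariant planar region swept out by the admissible loop forces a fixed point $\tilde z$ with $\tilde f^{s}(\tilde z)=T^r(\tilde z)$. The projection $z=\check\pi(\tilde z)$ is then a periodic point of $f$ whose period divides $s$, and the irreducibility $\gcd(r,s)=1$ promotes the period to exactly $s$ and identifies $\Gamma^r$ as the transverse loop associated to $z$.

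The main obstacle I expect lies in the second step: property $(Q_q)$ only yields $\limsup r_k/s_k\geq 1/q$, whereas to obtain a genuine period-$s$ orbit associated with $\Gamma^r$ one needs admissible paths with endpoint-to-order ratio \emph{exactly} $r/s$. Extracting such paths from the asymptotic data requires combining the combinatorial flexibility afforded by the self-intersection, the sharpening Lemma \ref{Proposition19LCT}, and the chaining Lemma \ref{corollary22LCT} in a uniform way along the sequences $r_k, s_k$, and this is the technical heart of the argument in \cite{lct}.
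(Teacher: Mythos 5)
Two genuine gaps, one in each half of your sketch; note also that the paper itself contains no proof of this statement (it is quoted from \cite{lct}, where it is Proposition 26 and is proved by working in the annular covering space of $\dom(\mathcal{F})$ associated with $\Gamma$), so your attempt must stand on its own. The first gap is the middle step, which you yourself flag but do not carry out, and which is precisely the heart of the matter. Property $(Q_q)$ only gives that $\gamma|_{[0,r_k]}$ is admissible of order $\leq s_k$ with $\limsup r_k/s_k\geq 1/q$; Lemma \ref{Proposition19LCT} upgrades ``order $\leq s_k$'' to ``order $s_k$'', but it never allows you to \emph{lower} the order: from $\gamma|_{[0,nr]}$ admissible of order $\leq s_k$ you cannot conclude admissibility of order $ns$ when $ns<s_k$, and concatenation via Proposition \ref{proposition20LCT} and Lemma \ref{corollary22LCT} only \emph{adds} orders, so the endpoint-to-order ratios you can certify never exceed the $r_k/s_k$ you started with and are never pinned to exactly $r/s$. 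The construction in \cite{lct} that achieves the exact ratio uses the self-intersection in an essential combinatorial way (inserting extra turns around the loop to realize any prescribed slower ratio $r/s\leq 1/q$ while keeping exact orders); without that mechanism your claim ``for every multiple $N$ of $s$ I obtain an admissible path of order exactly $N$ joining $\phi$ to $T^{rN/s}(\phi)$'' is unsupported. A smaller but real inaccuracy in the same step: the $\mathcal{F}$-transverse self-intersection of $\Gamma$ is realized by \emph{some} nontrivial deck transformation, not necessarily by the automorphism $T$ attached to the loop, so ``after reindexing, $\tilde\gamma$ and $T(\tilde\gamma)$ intersect transversally'' is not justified.

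The second gap is the final fixed-point step. The relation $\tilde f^{s}(\phi)\cap T^{r}(\phi)\neq\emptyset$ for one leaf $\phi$ does not force a fixed point of $T^{-r}\circ\tilde f^{s}$: a fixed-point-free orientation-preserving homeomorphism of the plane can perfectly well map a given line across a translate of it, so there is no ``contrapositive of the plane translation theorem'' available here. To contradict fixed-point-freeness you would need a non-wandering point, a periodic chain of free disks, or points rotating in opposite directions for $T^{-r}\circ\tilde f^{s}$ -- and note that this proposition carries no measure-preservation or recurrence hypothesis, so the recurrence your argument implicitly leans on is simply not available; moreover the leaves of $\widetilde{\mathcal{F}}$ are Brouwer lines for $\tilde f$, not for the composed map $T^{-r}\circ\tilde f^{s}$, so that property cannot be invoked for it. Finally, even granting a fixed point $\tilde z$ of $T^{-r}\circ\tilde f^{s}$, the statement to be proved is stronger than ``there is a periodic point of period $s$ with rotation data $r$'': one must show that $\Gamma^{r}$ is the loop \emph{associated} to that orbit, i.e.\ that $I^{s}_{\mathcal{F}}(z)$ is $\mathcal{F}$-equivalent to the natural lift of $\Gamma^{r}$, and your argument never addresses this; in \cite{lct} it is obtained because the fixed point is produced inside the annulus of leaves met by $\Gamma$ with its whole transverse trajectory controlled by the construction. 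The period-exactly-$s$ argument via $\gcd(r,s)=1$ is fine, but it is the only part of this last step that is.
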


The following result will permit to apply the previous proposition.

\begin{leem}[Lemma 30 of \cite{lct}]\label{lemma30LCT}
  Let $\fonc{\gamma_1,\gamma_2}{\R}{M}$ be two admissible positively recurrent paths (possibly equal) with a $\mathcal{F}$-transverse intersection, and let $I_1$ and $I_2$ be two real segments. Then there exists a linearly admissible transverse loop $\Gamma$ with a $\mathcal{F}$-transverse self-intersection, such that $\gamma_1|_{I_1}$ and $\gamma_2|_{I_2}$ are equivalent to subpaths of the natural lift of $\Gamma$.
\end{leem}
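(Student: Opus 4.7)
The plan is to build $\Gamma$ by iterating a concatenation scheme that alternates subpaths of $\gamma_1$ and $\gamma_2$, exploiting positive recurrence to produce arbitrarily many copies of the original $\mathcal{F}$-transverse intersection.

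First, using the observation (stated right after the definition of $\mathcal{F}$-transverse intersection in the general case) that any $\mathcal{F}$-transverse intersection can be realized at a common intersection point, I would replace $\gamma_1,\gamma_2$ by equivalent representatives so that $\gamma_1(t_1)=\gamma_2(t_2)\in\phi$, and after enlarging $I_1,I_2$ slightly assume $t_i$ lies in the interior of $I_i$. Since $\gamma_1$ is $\mathcal{F}$-positively recurrent, for arbitrarily large $N$ there exists a segment $J_1^{\ast}\subset[N,+\infty)$ with $\gamma_1|_{J_1^{\ast}}$ $\mathcal{F}$-equivalent to $\gamma_1|_{I_1}$; this segment contains a parameter $t_1^{\ast}$ where $\phi_{\gamma_1(t_1^{\ast})}=\phi$ and the surrounding leaf structure matches that around $t_1$. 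Since $\mathcal{F}$-transverse intersection depends only on the $\mathcal{F}$-equivalence class, $\gamma_1$ near $t_1^{\ast}$ still $\mathcal{F}$-transversally intersects $\gamma_2$ near $t_2$ at $\phi$. Iterating this for both paths produces sequences $t_1=t_1^{(0)}<t_1^{(1)}<t_1^{(2)}<\cdots$ along $\gamma_1$ and $t_2=t_2^{(0)}<t_2^{(1)}<t_2^{(2)}<\cdots$ along $\gamma_2$, with each $t_i^{(k)}$ sitting in a segment of $\gamma_i$ which is $\mathcal{F}$-equivalent to $\gamma_i|_{I_i}$.

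Next I would apply Corollary \ref{corollary22LCT} inductively to the alternating family of $2k$ paths
$$\gamma_1|_{[t_1,t_1^{(1)}]},\;\gamma_2|_{[t_2,t_2^{(1)}]},\;\gamma_1|_{[t_1,t_1^{(1)}]},\;\gamma_2|_{[t_2,t_2^{(1)}]},\;\cdots$$
joined at the $\mathcal{F}$-transverse intersections sitting over $\phi$. Denoting by $m_1$ and $m_2$ admissibility orders for one factor of $\gamma_1$ and one of $\gamma_2$ respectively, the resulting transverse path is admissible of order $k(m_1+m_2)$. Its leaf pattern is periodic with one period equal to the concatenation $\gamma_1|_{[t_1,t_1^{(1)}]}\cdot\gamma_2|_{[t_2,t_2^{(1)}]}$, and this periodic pattern determines (up to equivalence) a transverse loop $\Gamma$ whose natural lift contains subpaths equivalent to $\gamma_1|_{I_1}$ and $\gamma_2|_{I_2}$. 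Setting $r_k=k$ and $s_k=k(m_1+m_2)$ verifies property $(Q_{m_1+m_2})$, so $\Gamma$ is linearly admissible of order $q:=m_1+m_2$. Finally, two consecutive periods of $\Gamma$ inherit the original crossing of $\gamma_1$ and $\gamma_2$ at $\phi$, giving $\Gamma$ a $\mathcal{F}$-transverse self-intersection.

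The main obstacle is to rigorously realize the above \emph{periodic leaf pattern} as an honest transverse loop in the sense of the paper, i.e., to exhibit a lift of the natural lift to $\widetilde{\dom}(\mathcal{F})$ together with a single covering automorphism $T$ for which the equivariance conditions of loop $\mathcal{F}$-equivalence hold. This requires carefully tracking covering translates during the iterated concatenation, and verifying that the resulting lift $\widetilde{\gamma}$ admits a nontrivial covering translate $T'(\widetilde{\gamma})$ with which it has a $\widetilde{\mathcal{F}}$-transverse intersection, yielding the required $\mathcal{F}$-transverse self-intersection of $\Gamma$.
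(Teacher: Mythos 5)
Your overall route is the right one (and, for what it is worth, it is essentially the argument of \cite{lct}; the present paper only quotes the lemma without proof): use positive recurrence to return to the crossing leaf $\phi$, close up a loop whose period is a piece of $\gamma_1$ followed by a piece of $\gamma_2$, apply Lemma \ref{corollary22LCT} to the $k$-fold alternating concatenation to get admissibility of order $k(m_1+m_2)$ and hence property $(Q_{m_1+m_2})$, and let the loop inherit the self-intersection from the original crossing. The genuine gap is the claim that the natural lift of your $\Gamma$ ``contains subpaths equivalent to $\gamma_1|_{I_1}$ and $\gamma_2|_{I_2}$''. With your choice of period, the $\gamma_1$-piece runs from the crossing parameter $t_1$ to its \emph{first} recurrent copy $t_1^{(1)}$; consequently one period contains $\gamma_1|_{[t_1,\sup I_1]}$ at its beginning and an equivalent copy of $\gamma_1|_{[\inf I_1,t_1]}$ at its end, but never a contiguous copy of all of $\gamma_1|_{I_1}$: right after $t_1^{(1)}$ the natural lift switches to the $\gamma_2$-piece, and the branch arriving at each $\gamma_1$-piece is a copy of $\gamma_2$ before $t_2$, which meets different leaves than $\gamma_1$ before $t_1$ (indeed $\phi_{\gamma_2(a_2)}$ is below $\phi_{\gamma_1(a_1)}$ relative to $\phi$), so no subpath straddling a junction is equivalent to $\gamma_1|_{I_1}$ either. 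The fix is to make the period longer: take the return parameter to be a copy of $t_1$ lying \emph{beyond a complete recurrence copy} of $\gamma_1|_{I_1}$, e.g.\ the copy of $t_1$ inside a second segment $J_1^{**}\subset(\sup J_1^{*},+\infty)$ equivalent to $\gamma_1|_{I_1}$; then the $\gamma_1$-piece of the period contains $\gamma_1|_{J_1^{*}}$, which is equivalent to $\gamma_1|_{I_1}$, and similarly for $\gamma_2$. The crossing structure at the junctions is unchanged, so the rest of your argument is unaffected.

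By contrast, the point you single out as the main obstacle is routine. The concatenated period begins and ends on the same leaf $\phi$ (all the distinguished parameters are copies of $t_1$ or $t_2$), so, exactly as the paper does when it manufactures loops from recurrent transverse paths, one replaces it by an equivalent transverse path whose endpoints coincide and takes the loop naturally defined by this closed path; no separate construction of the equivariant lift and of $T$ is needed, and the $\mathcal{F}$-transverse self-intersection follows because transverse intersection only depends on equivalence classes, so the $\gamma_1$-branch of $\Gamma$ near a copy of $t_1$ and the $\gamma_2$-branch near $t_2$ still cross at $\phi$. Two small repairs when invoking Lemma \ref{corollary22LCT}: its hypothesis (ii) forces the crossing parameters to be interior to the middle pieces, so feed it pieces slightly enlarged beyond the crossing parameters (its conclusion only concerns the product of the restrictions to $[s_i,t_i]$, which is exactly your $k$ periods); and since your pieces are a priori only admissible of order $\leq m_i$, apply the corollary to admissible paths of order $m_i$ containing them, with the same crossing parameters. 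This yields that the restriction of the natural lift to $k$ periods is admissible of order $\leq k(m_1+m_2)$, which is all that $(Q_q)$ requires, with $q=m_1+m_2$.
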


\subsection{Rotation set of annular homeomorphisms}\label{sectionrotationset}

In this paragraph, we consider a homeomorphism $f$ of the open annulus $\A=\T{1}\times \R$ which is isotopy to the identity. Let $\check{f}$ be a lift of $f$ to $\check{\A}$. We will give the definition of the \textit{rotation set of $\check{f}$} due to J. Franks (see \cite{franks96}). Given a compact set $K$ of $\A$, a number $\rho\in \overline{\R}:=\R\cup \{+\infty\}\cup \{-\infty\}$ belongs to the \textit{rotation set of $\check{f}$ relative to $K$}, denoted by $\rot_{K}(\check{f})$ if there exist a sequence of points $(\check{z}_k)_{k\in\N}$ and a sequence of integers $(n_k)_{k\in\N}$ which goes to $+\infty$ such that for every $k\in\N$, $\check{z}_k$ and $\check{f}^{n_k}(\check{z}_k)$ belong to $\check{\pi}^{-1}(K)$ and
    $$  \rho= \lim_{k\to +\infty} \frac{1}{n_k}(p_1(\check{f}^{n_k}(\check{z}_k))-p_1(\check{z}_k)). $$
The \textit{rotation set of $\check{f}$} is defined as
$$ \rot(\check{f}):=\overline{\bigcup_{K} \rot_{K}(\check{f})},$$
where $K$ is a compact set of $\A$ and the ``closure'' is taken in $\overline{\R}$.
We say that a point {\em $z\in \A$ has rotation number equal to $\rho$} if for any lift $\check{z}$ of $z$, we have that the limit $\lim_{n\to +\infty} \frac{1}{n}(p_1(\check{f}^{n}(\check{z}))-p_1(\check{z}))$ exists and it is equal to $\rho$. Note that if the limit exists, it is independent of $\check{z}\in \check{\pi}^{-1}(z)$.
We note that for every $p\in\Z$ and every $q\in\Z$, the map $\check{f}^q +(p,0)$ defined by $\check{z}\mapsto \check{f}^q(\check{z}) +(p,0)$ is a lift of $f^q$ and we have $\rot(\check{f}^q+ (p,0))=q\rot(\check{f})+p$. We note that the first author proved that this set is always an interval (see \cite{con}). We recall that this result has been known for measure-preserving homeomorphisms (for example, see \cite{lecalvez2}, Theorem 9.1 for a proof that uses maximal isotopies and transverse foliations). In this case, we obtain the following theorem.

\begin{theo}[\cite{franks96}, \cite{lecalvez2}]\label{rotationset is a interval}
  Let $f$ be a homeomorphism of $\A$ which is isotopic to the identity and preserves a Borel probability measure of full support. Let $\check{f}$ be a lift of $f$ to $\Aa$. Then for every irreducible rational number $r/s$ that belongs to the interior of $\rot(\check{f})$ there exists a point $\check{z}$ in $\check{\A}$ such that $\check{f}^s(\check{z})= \check{z}+(r,0)$.
\end{theo}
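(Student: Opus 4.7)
The strategy is to apply the forcing machinery of Section~2 after reducing the statement to a fixed-point problem. Set $\check{g}:=\check{f}^s-(r,0)$, a lift of $g:=f^s$. Then $\rot(\check{g})=s\,\rot(\check{f})-r$, so $0$ lies in the interior of $\rot(\check{g})$, and a fixed point of $\check{g}$ is exactly the sought-after $\check{z}$. Since $g$ inherits from $f$ the isotopy class of the identity and the invariant Borel probability measure of full support, the task reduces to showing: any lift $\check{g}$ of such a $g$ with $0\in\inte(\rot(\check{g}))$ must have a fixed point in $\Aa$.

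Next I invoke Theorem~\ref{existence maximal isotopy} to fix a maximal isotopy $I$ of $g$ and Theorem~\ref{existence transverse foliation} to obtain a singular foliation $\mathcal{F}$ transverse to $I$. I lift $I$ to $\Aa$ and, composing with a translation of $\Aa$ if necessary, arrange that the time-$1$ map of the lifted isotopy is $\check{g}$ itself. Arguing by contradiction, suppose $\check{g}$ has no fixed point. Since $0\in\inte(\rot(\check{g}))$, pick $\rho_-<0<\rho_+$ both in $\rot(\check{g})$. By the definition of the rotation set, there exist orbits returning to a common compact set of $\A$ with average $p_1$-displacement close to $\rho_\pm$; combining this with the density of bi-recurrent points of $g$ (a consequence of the invariant full-support measure) and Lemma~\ref{lemma10LCT}, I produce bi-recurrent points $z_\pm\in\A$ whose whole transverse trajectories $I^{\Z}_{\mathcal{F}}(z_\pm)$ are $\mathcal{F}$-bi-recurrent and whose lifts to $\Aa$ drift, in the first coordinate, to $-\infty$ and $+\infty$ respectively.

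The crux is to show that $I^{\Z}_{\mathcal{F}}(z_-)$ and $I^{\Z}_{\mathcal{F}}(z_+)$ admit an $\mathcal{F}$-transverse intersection. Once this is established, Lemma~\ref{lemma30LCT} yields a linearly admissible transverse loop $\Gamma$ with an $\mathcal{F}$-transverse self-intersection, whose order $q$ can be controlled by selecting subsegments of the two trajectories with $p_1$-displacements close to $\rho_-$ and $\rho_+$; Proposition~\ref{proposition26LCT} then produces periodic orbits of $g$ whose rotation under $\check{g}$ can be made arbitrarily close to $0$, and a limiting compactness argument (using that all resulting periodic orbits meet a fixed essential region attached to $\Gamma$) extracts an actual fixed point of $\check{g}$, contradicting the hypothesis. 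The main obstacle is this transverse-intersection step: one has to lift both trajectories to $\widetilde{\dom}(\mathcal{F})$, track which deck transformations relate the lifts, and apply Lemma~\ref{intersection transverse to a line} to the situation where the two lifts repeatedly come near a common leaf while drifting in opposite horizontal directions, deducing that they cannot remain on a single side of any transverse line and must therefore intersect $\widetilde{\mathcal{F}}$-transversally.
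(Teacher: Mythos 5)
You have correctly isolated the standard first step: setting $\check{g}:=\check{f}^{s}-(r,0)$, one has $\rot(\check{g})=s\rot(\check{f})-r$, so the claim reduces to showing that a lift with $0\in\inte(\rot(\check{g}))$ has a fixed point. Note, however, that the paper does not prove this theorem at all -- it is quoted from \cite{franks96} and from Theorem 9.1 of \cite{lecalvez2} -- and that, within the paper's own preliminaries, the reduction already finishes the proof in one stroke: $g=f^{s}$ preserves the same full-support Borel probability measure, so if $\check{g}$ were fixed point free, Lemma~\ref{rotation set contained non-zero number} would force $\rot(\check{g})\subset[-\infty,0]$ or $\rot(\check{g})\subset[0,+\infty]$, contradicting $0\in\inte(\rot(\check{g}))$. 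Your long forcing detour is therefore not needed, and, more importantly, as written it has genuine gaps at exactly the hard points.

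Concretely: (i) from $\rho_{-}<0<\rho_{+}$ in $\rot(\check{g})$ the definition only yields sequences of points and of times $n_k\to+\infty$ with large \emph{finite} displacement between returns to a compact set; Lemma~\ref{lemma10LCT} and density of bi-recurrent points then give bi-recurrent points whose transverse trajectories contain long subpaths with large displacement, but not points whose whole trajectories ``drift to $\pm\infty$'' -- on the open annulus single orbits with asymptotic drift need not exist (invariant measures need not have a rotation number here, which is precisely the difficulty the paper stresses after Theorem C), so this construction is unjustified. (ii) The step you yourself call the crux -- that the two trajectories intersect $\mathcal{F}$-transversally -- is only asserted; opposite horizontal displacement does not by itself produce an $\mathcal{F}$-transverse intersection, and the phrase ``they cannot remain on a single side of any transverse line'' is a heuristic, not an argument. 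Making this work is exactly the content of the technical lemmas of Section 3 and 4 of the paper (e.g.\ Lemma~\ref{corolemmaintersectiontransverse3} and Lemma~\ref{lemma41}, which rest on crossing three integer translates of a leaf and on Proposition 43 of \cite{lct}), none of which you reconstruct. (iii) The final step fails as stated: Proposition~\ref{proposition26LCT} produces periodic orbits whose rotation numbers are nonzero multiples $jr/s$ accumulating at $0$, but such orbits need not stay in a fixed compact part of $\A$ (the region $U_{\Gamma}$ need not be relatively compact), and even when they do, a limit of periodic points with rotation numbers tending to $0$ is not a fixed point of $\check{g}$; converting ``rotation numbers accumulating at $0$'' into an actual fixed point requires the Atkinson/Franks-type measure argument that underlies Lemma~\ref{rotation set contained non-zero number} -- essentially the statement you are trying to prove. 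A minor but real slip: you cannot ``compose the lifted isotopy with a translation'' to make its time-one map equal $\check{g}$; you must choose the initial identity isotopy $I'$ in the correct homotopy class before invoking Theorem~\ref{existence maximal isotopy}, as the paper does at the start of the proofs of Theorems B and A.
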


%
%

\subsection{A classical Brouwer theory lemma}

In this section, we will prove a general proposition that play a key role in the proof of the existence of non-contractible periodic point (see subsection \ref{corolemmaintersectiontransverse1}). We will use classical properties of translations, derived from Brouwer Theory. The following lemma is a direct consequence of Lemma 3.1 of \cite{brown}.

\begin{leem}\label{lemma3GKT}
  Let $K\subset \R^2$ be an arcwise connected set such that $K\cap (K+(1,0))=\emptyset$. Then for every $j\in \Z$ with $j\neq 0$, we have that $K\cap (K+(j,0))=\emptyset$.
\end{leem}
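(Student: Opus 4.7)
The plan is to deduce this lemma directly from Brown's Lemma~3.1 in \cite{brown}, applied to the orientation-preserving, fixed-point-free homeomorphism $T : \R^2 \to \R^2$ given by $T(z) = z + (1,0)$. Brown's lemma is a classical Brouwer-theoretic statement: for any orientation-preserving, fixed-point-free homeomorphism $f$ of $\R^2$ and any arcwise connected subset $K \subset \R^2$ satisfying $f(K) \cap K = \emptyset$, one automatically has $f^n(K) \cap K = \emptyset$ for every $n \in \Z \setminus \{0\}$. Since the translation $T$ is visibly orientation-preserving and has no fixed points, and the hypothesis $K \cap (K+(1,0)) = \emptyset$ is exactly $K \cap T(K) = \emptyset$, the conclusion $K \cap T^j(K) = \emptyset$ for all $j \neq 0$ is immediate, and rewriting $T^j(K) = K + (j,0)$ gives precisely the stated result.

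For a self-contained derivation, I would argue by contradiction. Let $j \geq 2$ be the smallest positive integer with $K \cap (K + (j,0)) \neq \emptyset$ (such a minimum exists if the conclusion fails, and by hypothesis $j \neq 1$). Choose $a \in K$ with $a + (j,0) \in K$, and an arc $\gamma : [0,1] \to K$ from $a$ to $a + (j,0)$. The remaining task reduces to a purely topological claim in the plane: any continuous path joining $a$ and $a + (j,0)$ for $j \geq 2$ must contain two points differing by exactly $(1,0)$, i.e.\ there exist $s, t \in [0,1]$ with $\gamma(s) = \gamma(t) + (1,0)$. Both such points lie in $K$, and then $\gamma(s) \in K \cap (K+(1,0)) = \emptyset$, the desired contradiction.

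The main obstacle is establishing this planar topological claim. The natural strategy is to look at $\gamma$ and its translate $\gamma + (1,0)$ simultaneously: their four endpoints $a,\, a+(1,0),\, a+(j,0),\, a+(j+1,0)$ are interlaced on the horizontal line through $a$ because $0 < 1 < j < j+1$, so the starting point of $\gamma + (1,0)$ lies strictly between the endpoints of $\gamma$ while its terminal point lies strictly to the right. Assuming for contradiction that $\gamma$ and $\gamma + (1,0)$ are disjoint, one closes $\gamma$ off with auxiliary horizontal rays from its endpoints into the far half-plane to form a separating set, and then shows that $\gamma + (1,0)$, starting on the segment joining $\gamma$'s endpoints and terminating strictly outside, must exit the bounded region via $\gamma$ itself, which produces the intersection. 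Equivalently one can use a degree computation on the continuous map $F(s,t) = \gamma(s) - \gamma(t) - (1,0)$ from $[0,1]^2$ to $\R^2$, showing that $F|_{\partial [0,1]^2}$ has nonzero winding number around the origin and hence $F$ must vanish in the interior. Brown's Lemma~3.1 packages this planar argument in a reusable form, which is why the present lemma reduces to a one-line citation.
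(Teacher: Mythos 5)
Your main argument is exactly the paper's proof: the authors also obtain this lemma as a direct consequence of Brown's Lemma~3.1 applied to the fixed-point-free, orientation-preserving translation $\check{z}\mapsto\check{z}+(1,0)$, and your verification of its hypotheses is correct. The supplementary ``self-contained'' sketch is not needed for this route (and its winding-number variant is only asserted, not justified), but the citation argument stands on its own and coincides with the paper's.
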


The following proposition is Lemma 12 from \cite{gukotal}. We outline here the proof.

\begin{prop}[\cite{gukotal}]\label{propocle}
  Let $\delta$ be a segment such that $\delta\cap (\delta+(1,0))=\emptyset$. Let $\fonc{\gamma}{[0,1]}{\R^2}$ be a path satisfying $\gamma(0)\in \delta$ and $\gamma(1)\in \delta+(j,0)$ for some $j\in\N$. Then
  \begin{itemize}
    \item[(i)] the path $\gamma$ meets $\gamma+(1,0)$, or
    \item[(ii)] for every $i\in \{0,\cdots,j\}$, the path $\gamma$ meets $\delta+(i,0)$.
  \end{itemize}
\end{prop}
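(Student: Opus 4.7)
The plan is to prove the contrapositive: assume $\gamma\cap(\gamma+(1,0))=\emptyset$, and deduce that $\gamma$ meets $\delta+(i,0)$ for every $i\in\{0,1,\ldots,j\}$.

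First, I apply Lemma~\ref{lemma3GKT} to the arcwise connected set $\gamma$ (the image of the interval $[0,1]$) to upgrade the disjointness to $\gamma\cap(\gamma+(k,0))=\emptyset$ for every $k\in\Z\setminus\{0\}$, and similarly to $\delta$ to obtain $\delta\cap(\delta+(k,0))=\emptyset$ for every $k\in\Z\setminus\{0\}$.

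Second, I extend $\delta$ to a properly embedded topological line $\lambda$ in $\R^2$ enjoying the two properties: (a) $\lambda\cap(\lambda+(1,0))=\emptyset$, and (b) $\gamma\cap((\lambda\setminus\delta)+(i,0))=\emptyset$ for every $i\in\{0,1,\ldots,j\}$. The extension consists of adjoining to each endpoint of $\delta$ a proper ray going to infinity. Both conditions reduce to making the two rays that constitute $\lambda\setminus\delta$ avoid the compact set
\[
K := (\delta+(1,0))\cup(\delta-(1,0))\cup\bigcup_{i=0}^{j}(\gamma-(i,0)),
\]
as well as their own horizontal integer translates. Since $K$ is bounded and each of its pieces is one-dimensional, from each endpoint of $\delta$ one can (after a small initial detour if the endpoint lies on $\partial K$) head vertically upward to $+\infty$ while remaining in $\R^2\setminus K$; by choosing different $x$-coordinates for the two vertical tails, these rays are pairwise disjoint and disjoint from their own integer translates by $(\pm 1,0)$. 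Applying Lemma~\ref{lemma3GKT} once more to $\lambda$, the family $\{\lambda+(k,0)\}_{k\in\Z}$ consists of pairwise disjoint properly embedded lines, partitioning $\R^2$ into strips.

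Third, I conclude. Since $\gamma$ is a connected path from $\gamma(0)\in\lambda$ to $\gamma(1)\in\lambda+(j,0)$ and the disjoint family $\{\lambda+(k,0)\}_{k\in\Z}$ partitions $\R^2$ into strips, $\gamma$ must meet every line $\lambda+(i,0)$ for $i\in\{0,1,\ldots,j\}$. By property (b), any such intersection point must lie in $\delta+(i,0)$, since a point of $\gamma$ in $(\lambda+(i,0))\setminus(\delta+(i,0))=(\lambda\setminus\delta)+(i,0)$ would violate (b). Therefore $\gamma\cap(\delta+(i,0))\neq\emptyset$ for every $i\in\{0,1,\ldots,j\}$, which is (ii). The main obstacle is the construction in the second step: one must simultaneously arrange (a) and (b) by a single choice of extension $\lambda$, and this reduces to the routine but slightly delicate planar-topology exercise of routing two disjoint proper rays, starting at prescribed points and avoiding a given compact $1$-dimensional set $K$, out to infinity while keeping them disjoint from their own integer translates.
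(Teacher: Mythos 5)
Your first step is fine, but the argument collapses at the third step: pairwise disjoint proper lines need \emph{not} cut the plane into linearly ordered strips, and for the line $\lambda$ you construct they demonstrably do not. Since you send \emph{both} rays vertically upward, $\lambda$ is a ``hairpin'': if its two vertical tails lie at abscissae $a<b$, then for $0<i<j$ the complement of $\lambda+(i,0)$ has one component which, at large height, is pinched between the vertical lines $x=a+i$ and $x=b+i$, and a second (unbounded) component containing every point far below $\delta+(i,0)$ as well as the regions $x<a+i$ and $x>b+i$ at large height. The tail of $\lambda$ at $x=a<a+i$ and the tail of $\lambda+(j,0)$ at $x=b+j>b+i$ both lie in this second component, so $\lambda+(i,0)$ separates neither $\lambda$ from $\lambda+(j,0)$ nor $\gamma(0)$ from $\gamma(1)$: a path can drop below the hairpins, run horizontally, and climb back up, meeting only the first and the last one. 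Hence ``$\gamma$ must meet every line $\lambda+(i,0)$'' is not a consequence of the disjointness of the family $\{\lambda+(k,0)\}_{k\in\Z}$; Lemma \ref{lemma3GKT} only yields disjointness, never an ordering or separation statement, and with your choice of tails the needed separation is simply false. There is also a secondary problem in the second step: $\gamma$ is merely a continuous path, so the compact set $K$ you want the rays to avoid need not be one-dimensional (its image can have interior), and an endpoint of $\delta$ may lie in the interior of $K$ or in a bounded component of its complement; this also obstructs the natural repair of sending one ray up and one ray down, which would otherwise restore the strip picture by making $\lambda$ project to a proper essential line of the annulus.

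The paper's proof avoids constructing any auxiliary line. For each fixed $i$ it sets $t_0$ to be the last time $\gamma$ meets $\bigcup_{n\geq 0}(\delta+(i-n,0))$ and $t_1$ the first later time it meets $\bigcup_{n\geq 1}(\delta+(i+n,0))$, with $\gamma(t_0)\in\delta+(i-i_0,0)$ and $\gamma(t_1)\in\delta+(i+j_0,0)$, and applies Lemma \ref{lemma3GKT} to the arcwise connected set $(\delta+(i-i_0,0))\cup\gamma([t_0,t_1])\cup(\delta+(i+j_0,0))$: if $i_0+j_0>1$ this set would be disjoint from all its nonzero horizontal translates, contradicting the fact that the translation by $(i_0+j_0,0)$ carries one of its two $\delta$-pieces onto the other. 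This forces $i_0=0$ and $j_0=1$, so $\gamma(t_0)\in\delta+(i,0)$. The separation you tried to obtain from the translates of $\lambda$ is exactly what this connectedness argument replaces, so your proof has a genuine gap and would need to be reworked along these lines.
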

\begin{proof}
  Suppose that Assertion $(i)$ does not hold, that is $\gamma\cap (\gamma+(1,0))=\emptyset$. Fix $i\in \{0,\cdots,j\}$, we will prove that $\gamma\cap (\delta+(i,0))\neq \emptyset$.  Consider
  $$  t_0:=\max\left\{ t\in [0,1] : \gamma(t)\in \bigcup_{n\geq 0} (\delta+(i-n,0))  \right\},   $$
  $$  t_1:=\min\left\{ t\in [t_0,1] : \gamma(t)\in \bigcup_{n\geq 1} (\delta+(i+n,0))  \right\},   $$
and let $i_0\geq 0$ and $j_0\geq  1 $ be integers such that $\gamma(t_0)\in \delta+(i-i_0,0)$ and $\gamma(t_1)\in \delta+(i+j_0,0)$. Finally, let
$$  K:= (\delta+(i-i_0,0))\cup \gamma([t_0,t_1])\cup (\delta+(i+j_0,0)), $$ which is an arcwise connected set.
We prove by contradiction that $i_0=0$ and $j_0=1$, i.e. we assume that $i_0+j_0>1$ and we seek a contradiction. Note first that by construction, the path $\gamma((t_0,t_1))$ is disjoint from $\cup_{n\in\Z} (\delta+(n,0))$.  Since $\delta\cap (\delta+(1,0))=\emptyset$, it follows from Lemma \ref{lemma3GKT} that for every non-zero integer $n$ we have $\delta\cap (\delta+(n,0))=\emptyset$, and so $\delta+(i-i_0,0)$ is disjoint from $\delta+(i+j_0,0)$ (because $i_0+j_0>1$). Therefore we have that $K$ is disjoint from $K+(1,0)$, and so again by Lemma \ref{lemma3GKT} we have that for every non-zero integer $n$, $K$ is disjoint from $K+(n,0)$. But $i_0+j_0>1$, and clearly $K+(i_0+j_0,0)$ intersects $K$. This contradiction shows that $i_0+j_0=1$, i.e. $i_0=0$ and $j_0=1$.\\ Since
$\gamma(t_0)\in \gamma\cap (\delta+(i-i_0,0))= \gamma\cap (\delta+(i,0))$, we have shown that $\gamma$ intersects $\delta+(i,0)$, i.e. Assertion $(ii)$ holds. This completes the proof of the proposition.
\end{proof}

\subsection{A triple boundary lemma for surface homeomorphisms}

In this section we will state a theorem recently proved by A. Koropecki, P. Le Calvez and the second author (see \cite{KLCT}). It plays a key role in the proof of Theorem A because simplifies the first version of its proof. We denote by $\s^2$ the sphere of dimension 2.

\begin{theo}\label{a triple boundary lemma}
Suppose that $f:\s^2\to \s^2$ is an orientation-preserving homeomorphism, and $B$ is a closed topological disk such that $f(B)\cap B=\emptyset$. If $B$ intersects three pairwise disjoint open $f$-invariant topological disks, then $f$ has wandering points.
\end{theo}

A point $x$ is called  \textit{wandering} for a homeomorphism $f$ of a topological space $X$ if there is an open neighbourhood $U$ of $x$  such that the sets $f^{-n}(U)$, $n\geq 0$ are pairwise disjoint.

\section{Existence of non-contractible periodic points for homeomorphisms of the open annulus}

In order to prove Theorem A, we prove a recurrence type theorem in the lifted dynamics of a homeomorphism of the open annulus that is isotopic to the identity. Let $f$ be a homeomorphism of $\A$ which is isotopic to the identity and let $I'$ be an identity isotopy of $f$. A periodic point $z\in\A$ of period $q\in\N$ is said \textit{contractible (with respect to the isotopy $I'$)} if the loop $I'^q(z)$ is homotopically trivial in $\A$, otherwise it is said \textit{non-contractible (with respect to the isotopy $I'$)}. In this section we examine some conditions that ensure the existence of non-contractible periodic point of arbitrarily high period. We have the following result.

\begin{theob}\label{existencecontractiblepoints}
  Let $f$ be a homeomorphism of $\A$ which is isotopic to the identity and preserves a Borel probability measure of full support. Let $\check{f}$ be a lift of $f$ to $\Aa$. Suppose that $\check{f}$ has fixed points. Then one of the following alternatives must hold:
  \begin{itemize}
    \item [(1)] there exists an integer $q\geq 1$ such that for every irreducible rational number $r/s\in (0,1/q]$ the map $\check{z}\mapsto \check{f}^s(\check{z})+(r,0)$ has a fixed point or for every irreducible rational number $r/s\in (0,1/q]$ the map $\check{z}\mapsto \check{f}^s(\check{z})-(r,0)$ has a fixed point.

    \item [(2)] for every recurrent point $z\in\A$ there exists an open topological disk $V$ containing $z$ such that: if $\check{V}$ is a lift of $V$ and $\check{z}$ is the lift of $z$ contained in $\check{V}$, then for every integer $n$ satisfying $f^n(z)\in V$ we have $ \check{f}^n(\check{z})\in \check{V}$. In particular $\check{f}$ is non-wandering.
    \end{itemize}
\end{theob}
%
%
%
%

\textit{Proof of Theorem B.} Since $f$ is a homeomorphism of $\A$ isotopic to the identity, for every lift $\check{g}$ of $f$ to $\Aa$ one can always find an identity isotopy of $f$ that lifts to a path in the space of homeomorphisms of $\Aa$ joining the identity and $\check{g}$. Therefore, let $I'$ be an identity isotopy of $f$, such that its lift to $\check{\A},$ denoted $\check{I}',$ is an identity isotopy of $\check{f}$. By Theorem \ref{existence maximal isotopy} one can find a maximal identity isotopy $I$ of $f$ larger than $I'$. It can be lifted to an isotopy $\check{I}$ with $\check{\dom}(I)=\check{\pi}^{-1}(\dom(I))$. This isotopy is a maximal identity isotopy of $\check{f}$ larger than $\check{I}'$. By Theorem \ref{existence transverse foliation} one can  find an oriented singular foliation $\mathcal{F}$ which is transverse to $I$, its lift to $\check{\dom}(I)$,
denoted by $\check{\mathcal{F}}$ is transverse to $\check{I}$. Theorem B is  a consequence of the following proposition which will be proved below.

\begin{prop}\label{existencecontractiblepoints}
 Suppose that $f$ has contractible fixed points (with respect to $I$) and that one of the following conditions is satisfied:
    \begin{itemize}
    \item[(i)] There exist a linearly admissible transverse loop $\Gamma$ with a $\mathcal{F}$-transverse self-intersection, a lift $\check{\gamma}$ of the natural lift of $\Gamma$ to $\Aa$ and a non-zero integer $j$ such that for every $t\in\R$ we have $\check{\gamma}(t+1)=\check{\gamma}(t)+(j,0)$.
    \item[(ii)] There exist a linearly admissible transverse loop $\Gamma$ with a $\mathcal{F}$-transverse self-intersection, a lift $\check{\gamma}$ of the natural lift of $\Gamma$ to $\Aa$ and a non-zero integer $j$ such that $\check{\gamma}$ is the natural lift of a loop $\check{\Gamma}$ and $\check{\Gamma}$ and $\check{\Gamma}+(j,0)$ have a $\check{\mathcal{F}}$-transverse intersection.
    \item[(iii)] There exist an admissible $\mathcal{F}$-bi-recurrent transverse path $\gamma$ which has no $\mathcal{F}$-transverse self-intersection, a lift $\check{\gamma}$ of $\gamma$ to $\Aa$, a leaf $\check{\phi}$ of $\check{\mathcal{F}}$ and a non-zero integer $j$ such that $\check{\gamma}$ crosses both $\check{\phi}$ and $\check{\phi}+(j,0)$.
  \end{itemize}
  Then there exists an integer $q\geq 1$ such that for every irreducible rational number $r/s\in (0,1/q]$ the map $\check{z} \mapsto  \check{f}^s(\check{z}) +(r,0)$ or $\check{z} \mapsto  \check{f}^s(\check{z}) -(r,0)$ has a fixed point. In particular $f$ has non-contractible periodic points of arbitrarily high period.
\end{prop}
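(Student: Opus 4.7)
\emph{Proof strategy.} The plan is to reduce each of the three hypotheses to a common combinatorial setup: a linearly admissible transverse loop $\Gamma^{\star}$ of some order $q_0\geq 1$ in $\A$, with an $\mathcal{F}$-transverse self-intersection, whose natural lift $\check{\gamma}^{\star}$ to $\Aa$ satisfies $\check{\gamma}^{\star}(t+1)=\check{\gamma}^{\star}(t)+(j^{\star},0)$ for some non-zero integer $j^{\star}$. Once this reduction is in hand I conclude via Proposition~\ref{proposition26LCT} combined with Theorem~\ref{rotationset is a interval}.

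Case~(i) provides this setup directly. For Case~(ii) I would lift to $\Aa$ and apply Proposition~\ref{proposition20LCT} at the $\check{\mathcal{F}}$-transverse intersection of $\check{\Gamma}$ and $\check{\Gamma}+(j,0)$; by equivariance, analogous transverse intersections exist between consecutive translates $\check{\Gamma}+(kj,0)$, so chaining them via Lemma~\ref{corollary22LCT} produces, for every $n\geq 1$, an admissible path in $\Aa$ of linearly growing order joining $\check{\Gamma}(0)$ to $\check{\Gamma}(0)+(nj,0)$. Projecting the single-step concatenation to $\A$ gives a loop $\Gamma^{\star}$ of winding $j$; linear admissibility comes from the chain, and the $\mathcal{F}$-transverse self-intersection is inherited from the original intersection of $\check{\Gamma}$ with $\check{\Gamma}+(j,0)$. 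For Case~(iii) I would compactify $\A$ to $\s^2$ by adjoining its two ends and apply Proposition~\ref{proposition2LCT} to the extended foliation to obtain a simple transverse loop $\Gamma'$ in $\A$ such that $\gamma$ is $\mathcal{F}$-equivalent to its natural lift. The assumption that $\check{\gamma}$ crosses both $\check{\phi}$ and $\check{\phi}+(j,0)$ forces $\Gamma'$ to be essential in $\A$. Since $\Gamma'$ is simple, producing the required $\mathcal{F}$-transverse self-intersection requires the contractible fixed point $\check{z}_0$ of $\check{f}$: applying Proposition~\ref{propocle} to a transverse segment $\delta$ near $\check{z}_0$ and to the subpath of $\check{\gamma}$ joining $\delta$ to $\delta+(j,0)$, the Brouwer dichotomy either produces an intersection of this subpath with its $(1,0)$-translate---from which Lemma~\ref{lemma30LCT} extracts a linearly admissible self-intersecting loop---or forces $\gamma$ to cross every intermediate translate $\delta+(i,0)$, in which case two distinct subpaths of $\gamma$ cross each other $\mathcal{F}$-transversally and Lemma~\ref{lemma30LCT} again produces the desired loop.

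With the reduction complete, Proposition~\ref{proposition26LCT} applied to $\Gamma^{\star}$ yields, for every irreducible $a/b\in(0,1/q_0]$, a periodic orbit of $f$ of period $b$ whose lift $\check{z}$ satisfies $\check{f}^{b}(\check{z})=\check{z}+(aj^{\star},0)$; in particular every rational $aj^{\star}/b$ lies in $\rot(\check{f})$. Since $\check{f}$ has a fixed point we also have $0\in\rot(\check{f})$, and because $\rot(\check{f})$ is a closed interval (the remark preceding Theorem~\ref{rotationset is a interval}) it follows that $\rot(\check{f})$ contains the entire segment between $0$ and $j^{\star}/q_0$. Setting $q=q_0+1$, for every irreducible $r/s\in(0,1/q]$ we have $r/s<1/q_0\leq|j^{\star}|/q_0$, so $\pm r/s$ lies strictly in the interior of $\rot(\check{f})$ with sign matching $j^{\star}$; Theorem~\ref{rotationset is a interval} produces $\check{z}\in\Aa$ with $\check{f}^{s}(\check{z})=\check{z}\pm(r,0)$, and $\gcd(r,s)=1$ forces this $\check{z}$ to have minimal $f$-period exactly $s$, so $f$ has non-contractible periodic points of arbitrarily large prime period.

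The main obstacle is the Case~(iii) reduction: extracting an $\mathcal{F}$-transverse self-intersection from a transverse path assumed to have none. The key is to balance the non-zero horizontal displacement forced by the crossings of $\check{\phi}$ and $\check{\phi}+(j,0)$ against the zero displacement of the contractible fixed point, and Proposition~\ref{propocle} is exactly the Brouwer-theoretic tool that converts this mismatch into the intersection needed to invoke the linear-admissibility machinery of \cite{lct}.
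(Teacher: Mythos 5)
Your handling of cases (i) and (ii) and your final step (one non-zero rational rotation number realized by a periodic orbit, plus $0\in\rot(\check f)$ from the fixed points, plus the interval property and Theorem \ref{rotationset is a interval}) are essentially the paper's argument. The gap is case (iii), and it is a genuine one, not a repairable detail. The hypothesis there is that $\gamma$ has \emph{no} $\mathcal{F}$-transverse self-intersection; by the definition of self-intersection (taken in the universal cover of $\dom(\mathcal{F})$, with all covering automorphisms) this already rules out both a $\check{\mathcal{F}}$-transverse intersection of $\check{\gamma}$ with any translate $\check{\gamma}+(i,0)$ and a transverse crossing between two subpaths of $\gamma$. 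So both horns of the dichotomy you propose to extract from Proposition \ref{propocle} assert exactly what the hypothesis forbids, and your reduction to a self-intersecting linearly admissible loop cannot go through. Moreover Proposition \ref{propocle} only produces \emph{topological} intersections; converting those into $\mathcal{F}$-transverse ones is the content of Lemmas \ref{lemmaproofProposition2} and \ref{lemmaintersectiontransverse1}, and in the paper that whole circle of ideas (Corollaries \ref{corolemmaintersectiontransverse1}--\ref{corolemmaintersectiontransverse2}) is used to \emph{verify} that one of hypotheses (i)--(iii) holds in the proofs of Theorems A and B, not to prove the present Proposition. Also note that the contractible fixed points are in general singularities of $\mathcal{F}$, so "a transverse segment $\delta$ near $\check{z}_0$" is not available in the way you use it. Case (iii) is not vacuous: it is precisely the annular situation in which no forcing argument can create periodic orbits.

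What the paper actually does in case (iii) is a different dichotomy. By Proposition \ref{proposition2LCT}, $\gamma$ is equivalent to the natural lift of a \emph{simple} transverse loop $\Gamma$, and the hypothesis on $\check{\phi}$, $\check{\phi}+(j,0)$ makes $U_\Gamma$ an essential open annulus. If some recurrent point has a whole transverse trajectory meeting $\Gamma$ $\mathcal{F}$-transversally, the paper concatenates pieces of that trajectory with a piece of $\gamma$ (via Lemma \ref{intersection transverse to a line} and Lemma \ref{corollary22LCT}) to obtain an admissible path that intersects one of its integer translates $\check{\mathcal{F}}$-transversally, and Lemma \ref{lemma} sends this back to condition (i) or (ii); note the new path is not a subpath of $\gamma$, so no contradiction arises. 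If no such recurrent point exists, no transverse self-intersection exists at all, and the conclusion does not come from Proposition \ref{proposition26LCT}: the paper forms $A_\Gamma=\inte(\overline{\rec(f)_\Gamma})$, which by Proposition 54 of \cite{lct} is an essential, $f$-invariant, fixed-point-free open annulus, and applies the Atkinson-type Lemma \ref{rotation set contained non-zero number} to $f$ restricted to $A_\Gamma$ to produce a non-zero element of $\rot(\check f)$; combined with $0\in\rot(\check f)$ and Theorem \ref{rotationset is a interval} this yields the periodic orbits. This ergodic-theoretic subcase, which is the only place the full-support invariant measure is used beyond the density of recurrent points, is entirely missing from your proposal.
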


\begin{rema}
  If above condition (ii) holds, we will prove that there exists an integer $q\geq 1$ such that for every irreducible rational number $r/s\in [-1/q,1/q]$ the map $\check{z} \mapsto  \check{f}^s(\check{z})+(r,0)$ has a fixed point.
\end{rema}

\subsection{Some conditions that ensure the existence of non-contractible periodic points}

In this subsection we examine some conditions that allow us to apply Proposition \ref{existencecontractiblepoints}, and so to ensure the existence of non-contractible periodic points of arbitrarily high period. Let $\mathcal{F}$ be an oriented singular foliation on $\A$. We recall some facts about $\mathcal{F}$-bi-recurrent transverse path on $\A$. Let $\fonc{\gamma}{J\subset \R}{\A}$ be a $\mathcal{F}$-bi-recurrent transverse path. The path $\gamma$ being bi-recurrent, one can find real numbers $a<b$ such that $\phi_{\gamma(a)}=\phi_{\gamma(b)}$. Replacing $\gamma$ by an equivalent transverse path, one can suppose that $\gamma(a)=\gamma(b)$. Let $\Gamma$ be the loop naturally defined by the closed path $\gamma|_{[a,b]}$. We know that every leaf that meet $\Gamma$ is wandering  (see \cite{lct} for more details) and consequently, if $t$ and $t'$ are sufficiently close, one has $\phi_{\Gamma(t)}\neq \phi_{\Gamma(t')}$. Moreover, because $\Gamma$ is positively transverse to $\mathcal{F}$, one cannot find an increasing sequence $(a_n)_{n\in\N}$ and a decreasing sequence $(b_n)_{n\in\N}$, such that $\phi_{\gamma(a_n)}=\phi_{\gamma(b_n)}$. So, there exist real numbers $a',b'$ with $a\leq a'<b'\leq b$ such that $t\mapsto \phi_{\gamma(t)}$ is injective on $[a',b')$ and satisfies $\phi_{\gamma(a')}=\phi_{\gamma(b')}$. Replacing $\gamma$ by an equivalent transverse path, one can suppose that $\gamma(a')=\gamma(b')$. Let $\Gamma'$ be the loop naturally defined by the closed path $\gamma|_{[a',b']}$. The set $U_{\Gamma'}=\bigcup_{t\in [a',b'] } \phi_{\gamma(t)}$ is an open annulus and $\Gamma'$ is a simple loop. As the path $\gamma$ is a $\mathcal{F}$-bi-recurrent transverse path we have the following result, whose proof is contained in the proof of Proposition 2 from \cite{lct}.

\begin{leem}[\cite{lct}]\label{lemmaproofProposition2}
  Suppose that there exists $t<a'$ such that $\gamma(t)\notin U_{\Gamma'}$. Then there exists $t'\in \R$ with $b'<t'$ such that $\gamma(t)$ and $\gamma(t')$ are in the same connected component of the complement of $U_{\Gamma'}$. Moreover $\gamma|_{[t,t']}$ has a $\mathcal{F}$-transverse self-intersection.
\end{leem}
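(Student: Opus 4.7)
The plan is three-step: use the $\mathcal{F}$-positive recurrence of $\gamma$ to produce $t'$, exploit the $\mathcal{F}$-saturation of $U_{\Gamma'}$ to place $\gamma(t)$ and $\gamma(t')$ in the same component of $\A\setminus U_{\Gamma'}$, and then lift to the universal cover of $\dom(\mathcal{F})$ to extract the $\mathcal{F}$-transverse self-intersection. First, since $\gamma$ is $\mathcal{F}$-positively recurrent, applied to the segment $[t,a']$ with threshold strictly larger than $b'$, there exist $b'<t'<t''$ such that $\gamma|_{[t',t'']}$ is $\mathcal{F}$-equivalent to $\gamma|_{[t,a']}$; in particular $\phi_{\gamma(t')}=\phi_{\gamma(t)}$ and $\phi_{\gamma(t'')}=\phi_{\gamma(a')}$. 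Second, $U_{\Gamma'}$ is a union of leaves, so every leaf is contained in or disjoint from $U_{\Gamma'}$; since $\gamma(t)\notin U_{\Gamma'}$, the connected leaf $\phi_{\gamma(t)}=\phi_{\gamma(t')}$ lies in a single connected component of $\A\setminus U_{\Gamma'}$, which then contains both $\gamma(t)$ and $\gamma(t')$.

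For the self-intersection, lift $\gamma$ to a path $\widetilde\gamma$ in the plane $\widetilde{\dom}(\mathcal{F})$ and let $T$ be the covering translation characterized by $\widetilde\gamma(b')=T\widetilde\gamma(a')$; it is non-trivial, since a non-singular foliation of the plane admits no simple transverse loop. The concatenation of the translates $T^k(\widetilde\gamma|_{[a',b']})$ for $k\in\Z$ is a $T$-invariant $\widetilde{\mathcal{F}}$-transverse line $\widetilde\lambda$ whose $\widetilde{\mathcal{F}}$-saturation projects onto $U_{\Gamma'}$. Because $\gamma(t),\gamma(t')\notin U_{\Gamma'}$, the leaves $\widetilde\phi_{\widetilde\gamma(t)}$ and $\widetilde\phi_{\widetilde\gamma(t')}$ avoid $\widetilde\lambda$, and each lies entirely in one of the sides $R(\widetilde\lambda)$, $L(\widetilde\lambda)$; lifting the $\mathcal{F}$-equivalence of the first step, one checks that $\widetilde\phi_{\widetilde\gamma(t')}=T^k\widetilde\phi_{\widetilde\gamma(t)}$ for some integer $k$, and since $T$ preserves the orientations of both $\widetilde\lambda$ and $\widetilde{\dom}(\mathcal{F})$ it preserves sides, so $\widetilde\gamma(t)$ and $\widetilde\gamma(t')$ sit on the same side, say $L(\widetilde\lambda)$. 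The plan is then to compare $\widetilde\gamma|_{[t,t']}$ with $T\widetilde\gamma|_{[t,t']}$: they share the common point $\widetilde\gamma(b')=T\widetilde\gamma(a')$ on a leaf $\widetilde\phi\subset\widetilde\lambda$, and one aims to show they cross $\widetilde{\mathcal{F}}$-transversally there, which by definition provides an $\mathcal{F}$-transverse self-intersection of $\gamma|_{[t,t']}$.

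The principal obstacle is verifying the transversality of this crossing. Lemma~\ref{intersection transverse to a line} does not apply directly to $\widetilde\lambda$ and $\widetilde\gamma|_{[t,t']}$, since $\widetilde\gamma|_{[a',b']}$ is contained in $\widetilde\lambda$ rather than crossing it. The remedy is a local analysis in the order-on-leaves sense of Section~2.3: determine on which side of $\widetilde\lambda$ each of the four short segments $\widetilde\gamma|_{[a'-\varepsilon,a']}$, $\widetilde\gamma|_{[b',b'+\varepsilon]}$, $T\widetilde\gamma|_{[a'-\varepsilon,a']}$ and $T\widetilde\gamma|_{[b',b'+\varepsilon]}$ approaches or departs the common leaf, using the $T$-equivariance of $\widetilde\lambda$ together with the side information $L(\widetilde\lambda)$ from the previous paragraph. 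The $T$-shift between $\widetilde\gamma$ and its translate should force the four leaves $\widetilde\phi_{\widetilde\gamma(a'-\varepsilon)}$, $\widetilde\phi_{T\widetilde\gamma(a'-\varepsilon)}$, $\widetilde\phi_{\widetilde\gamma(b'+\varepsilon)}$, $\widetilde\phi_{T\widetilde\gamma(b'+\varepsilon)}$ into exactly the ``above/below relative to $\widetilde\phi$'' pattern required by the definition of positive $\widetilde{\mathcal{F}}$-transverse intersection, completing the proof.
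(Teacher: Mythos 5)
Your proof of the first assertion is fine, and it is in the spirit of the argument the paper points to in \cite{lct}: positive recurrence applied to $\gamma|_{[t,a']}$ with a threshold beyond $b'$ produces $t'>b'$ with $\phi_{\gamma(t')}=\phi_{\gamma(t)}$, and since $U_{\Gamma'}$ is saturated and $\gamma(t)\notin U_{\Gamma'}$, this single (connected) leaf lies in one component of the complement, which therefore contains both $\gamma(t)$ and $\gamma(t')$.

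The ``moreover'' part, which is the real content of the lemma, is not established, for two concrete reasons. First, the claim $\widetilde\phi_{\widetilde\gamma(t')}=T^{k}\widetilde\phi_{\widetilde\gamma(t)}$ is unjustified: the deck transformation carrying one lift of the leaf $\phi_{\gamma(t)}$ to the other is determined by the class of $\gamma|_{[t,t']}$ closed up along that leaf in $\pi_1(\dom(\mathcal F))$, which is a free group typically much larger than the cyclic group generated by $T$; nothing confines that element to $\langle T\rangle$, so the ``same side of $\widetilde\lambda$'' conclusion does not follow, and with it the only place where you use the same-component information evaporates. Second, and more seriously, the announced remedy cannot work: whether $\widetilde\gamma|_{[t,t']}$ and $T\widetilde\gamma|_{[t,t']}$ intersect $\widetilde{\mathcal F}$-transversally at $\widetilde\phi=\phi_{\widetilde\gamma(b')}=\phi_{T\widetilde\gamma(a')}$ is not readable from the germs of the four $\varepsilon$-segments at $a'$ and $b'$. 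Every transverse path crosses a given leaf locally in the same way (from its right to its left), so the local picture of approach and departure is identical for any two such paths; $\widetilde{\mathcal F}$-transversality is the global requirement that suitable leaves met before and after the crossing be respectively below and above relative to $\widetilde\phi$, and for parameters just below $a'$ or just above $b'$ the path is in general still inside the saturation of $\widetilde\lambda$, where no such separation is visible. The argument in the proof of Proposition 2 of \cite{lct} obtains the above/below pattern precisely by following the path out to its genuine exits from the lifted annulus, i.e. by comparing the lifts of the exterior leaves $\phi_{\gamma(t)}$ and $\phi_{\gamma(t')}$ (which lie in the same complementary component of $U_{\Gamma'}$) with their $T$-translates relative to the line $\widetilde\lambda$, using tools such as Lemma \ref{intersection transverse to a line}; your sketch stops exactly where this work begins, as your own phrase ``should force'' concedes.
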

\begin{proof}
  See the proof of Proposition 2 from \cite{lct}.
\end{proof}

In the sequel, we assume, as in the previous subsection, that $f$ is a homeomorphism of $\A$ that is isotopic to the identity and preserves a Borel probability measure of full support, and that $I$ is a maximal identity isotopy of $f$. We write $\check{I}$ for the lifted isotopy of $I$ and $\check{f}$ for the lift of $f$ associated to $I$. This isotopy is a maximal identity isotopy of $\check{f}$. We suppose that $\mathcal{F}$ is an oriented singular foliation with $\dom(\mathcal{F})=\dom(I)$ which is transverse to $I$ and we write $\check{\mathcal{F}}$ for its lift to $\check{\A}$, which is transverse to $\check{I}$.

\begin{leem}\label{lemma}
Let $\fonc{\gamma}{[a,b]}{\A}$ be an admissible $\mathcal{F}$-transverse path and let $\fonc{\check{\gamma}}{[a,b]}{\Aa}$ be a lift of $\gamma$ to $\Aa$. Suppose that there exists a non-zero integer $j$ such that $\check{\gamma}$ and $\check{\gamma}+(j,0)$ intersect $\check{\mathcal{F}}$-transversally.
Then condition (i) or (ii) of Proposition \ref{existencecontractiblepoints} is satisfied.
\end{leem}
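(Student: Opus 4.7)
The approach is to translate the $\Aa$-level transverse intersection into a $\mathcal{F}$-transverse self-intersection of $\gamma$ in $\A$, apply Lemma \ref{lemma30LCT} to produce a linearly admissible transverse loop $\Gamma$, and then dichotomize on the homotopy class of $\Gamma$ in $\A$.

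First I would unpack the hypothesis. Since $\check{\pi}:\Aa\to\A$ restricts to $\dom(\check{\mathcal{F}})\to\dom(\mathcal{F})$ and the universal covering $\widetilde{\dom}(\mathcal{F})\to\dom(\mathcal{F})$ factors through it, there exists a covering automorphism $T_j$ of $\widetilde{\dom}(\mathcal{F})\to\dom(\mathcal{F})$ whose projection to $\Aa$ is the deck transformation $z\mapsto z+(j,0)$. Fix a lift $\widetilde{\gamma}$ of $\check{\gamma}$ to $\widetilde{\dom}(\mathcal{F})$; the hypothesis lifts to the statement that $\widetilde{\gamma}$ and $T_j\widetilde{\gamma}$ have a $\widetilde{\mathcal{F}}$-transverse intersection, say at parameters $u_1, v_1\in[a,b]$. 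Since $j\neq 0$, $T_j$ is non-trivial, so $\gamma$ has a $\mathcal{F}$-transverse self-intersection. Using the consequence of Lemma \ref{lemma10LCT} noted just after its statement, I may moreover assume that $\gamma$ is $\mathcal{F}$-bi-recurrent (replacing it by a bi-recurrent admissible extension), without losing the above self-intersection.

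Next I would apply Lemma \ref{lemma30LCT} with $\gamma_1=\gamma_2=\gamma$ and with real segments $I_1\ni u_1$, $I_2\ni v_1$. This produces a linearly admissible transverse loop $\Gamma$ with a $\mathcal{F}$-transverse self-intersection such that $\gamma|_{I_1}$ and $\gamma|_{I_2}$ are $\mathcal{F}$-equivalent to subpaths of the natural lift of $\Gamma$. Let $\check{\gamma}_\Gamma$ be the lift to $\Aa$ of the natural lift of $\Gamma$ and let $k\in\Z$ satisfy $\check{\gamma}_\Gamma(t+1)=\check{\gamma}_\Gamma(t)+(k,0)$. If $k\neq 0$, condition (i) of Proposition \ref{existencecontractiblepoints} is satisfied with this $\Gamma$ and integer $k$. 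If $k=0$, the loop $\Gamma$ lifts to a loop $\check{\Gamma}$ in $\Aa$, and I would argue that condition (ii) holds with $n=j$. The idea is to choose lifts $\widetilde{\gamma}|_{I_1}, \widetilde{\gamma}|_{I_2}$ and a common lift $\widetilde{\gamma}_\Gamma$ of the natural lift of $\Gamma$ to $\widetilde{\dom}(\mathcal{F})$ so that $\widetilde{\gamma}|_{I_1}$ and $\widetilde{\gamma}|_{I_2}$ are $\widetilde{\mathcal{F}}$-equivalent to subpaths of $\widetilde{\gamma}_\Gamma$ passing through $u_1$ and $v_1$ respectively. Applying $T_j$ to the second equivalence then transports the $\widetilde{\mathcal{F}}$-transverse intersection of $\widetilde{\gamma}$ with $T_j\widetilde{\gamma}$ at $(u_1,v_1)$ to a $\widetilde{\mathcal{F}}$-transverse intersection of $\widetilde{\gamma}_\Gamma$ with $T_j\widetilde{\gamma}_\Gamma$, which projects to the desired $\check{\mathcal{F}}$-transverse intersection of $\check{\Gamma}$ with $\check{\Gamma}+(j,0)$ in $\Aa$.

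The main obstacle is the compatibility of lifts in the $k=0$ case: the lifts $\widetilde{\gamma}|_{I_1}, \widetilde{\gamma}|_{I_2}$ and the subpaths of $\widetilde{\gamma}_\Gamma$ realizing the two equivalences must be chosen so that applying $T_j$ to the $I_2$-side matches the $I_1$-side in the same lift $\widetilde{\gamma}_\Gamma$. Equivalently, one must rule out that the two subpaths lie in different lifts $\widetilde{\gamma}_\Gamma^{(1)}, \widetilde{\gamma}_\Gamma^{(2)}$ differing by a deck transformation unrelated to $T_j$, for otherwise the inherited self-intersection of $\Gamma$ would be witnessed by a covering automorphism in the kernel of $\pi_1(\dom(\mathcal{F}))\to\pi_1(\A)$ (loops encircling fixed points of $I$) and hence would not produce an $\Aa$-translation intersection. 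Resolving this requires carefully tracking the deck transformations through the construction in the proof of Lemma \ref{lemma30LCT} of \cite{lct}.
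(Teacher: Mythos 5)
Your overall route is the paper's route: reduce to (a subpath of) the whole transverse trajectory of a bi-recurrent point, note that the hypothesis forces an $\mathcal{F}$-transverse self-intersection, invoke Lemma \ref{lemma30LCT} to get a linearly admissible loop $\Gamma$ with an $\mathcal{F}$-transverse self-intersection containing (an equivalent of) $\gamma$, and then split according to whether $\Gamma$ is essential in $\A$ (condition (i)) or lifts to a loop in $\Aa$ (condition (ii)). But in the decisive inessential case you stop at an acknowledged obstacle: you apply Lemma \ref{lemma30LCT} with two \emph{separate} small segments $I_1\ni u_1$, $I_2\ni v_1$, and then you cannot guarantee that the two resulting equivalences land in compatible lifts of the natural lift of $\Gamma$; your proposed remedy, ``tracking the deck transformations through the proof of Lemma \ref{lemma30LCT} in [LecT]'', is neither carried out nor available from the statement of that lemma. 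As it stands, with two unrelated equivalences you could a priori only conclude that $\check{\Gamma}$ and some translate $\check{\Gamma}+(m,0)$ intersect $\check{\mathcal{F}}$-transversally with $m$ possibly $0$, which is not condition (ii). So the key step of the lemma is left unproved.

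The gap is real but is closed by a much simpler choice, which is what the paper implicitly does: apply Lemma \ref{lemma30LCT} with a \emph{single} segment $I_1=I_2$ containing all the parameters $a_1<u_1<b_1$, $a_2<v_1<b_2$ witnessing the crossing, so that $\gamma$ itself (or the relevant subpath of the bi-recurrent trajectory) is $\mathcal{F}$-equivalent, by \emph{one} equivalence, to a subpath of the natural lift of $\Gamma$. Lifting this single equivalence to $\Aa$, the given lift $\check{\gamma}$ is $\check{\mathcal{F}}$-equivalent to a subpath of one fixed lift $\check{\Gamma}_1$ of the natural lift of $\Gamma$; applying the deck transformation $\check{z}\mapsto\check{z}+(j,0)$, which preserves $\check{\mathcal{F}}$ and hence equivalence, $\check{\gamma}+(j,0)$ is equivalent to a subpath of $\check{\Gamma}_1+(j,0)$. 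Since $\check{\mathcal{F}}$-transverse intersection is invariant under equivalence and passes from subpaths to the paths containing them, $\check{\Gamma}_1$ and $\check{\Gamma}_1+(j,0)$ intersect $\check{\mathcal{F}}$-transversally; in the zero-winding case these are translates of the loop $\check{\Gamma}$, so condition (ii) holds with the same $j$ (and in the nonzero-winding case condition (i) holds, as you said). With this formulation your worry about automorphisms in the kernel of $\pi_1(\dom(\mathcal{F}))\to\pi_1(\A)$ disappears: the only ambiguity in the choice of $\check{\Gamma}_1$ is a translation by some $(m,0)$, which cancels when comparing $\check{\Gamma}$ with $\check{\Gamma}+(j,0)$, and no inspection of the internal construction of Lemma \ref{lemma30LCT} is needed.
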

\begin{proof}
  By density of bi-recurrent points of $f$ and Lemma \ref{lemma10LCT} we can suppose that $\gamma$ is equivalent to a subpath  of the whole transverse trajectory of a bi-recurrent point. Since this whole transverse trajectory has a $\mathcal{F}$-transverse self-intersection (by hypothesis), by Lemma \ref{lemma30LCT} there exists a linearly admissible transverse loop $\Gamma'$ with a $\mathcal{F}$-transverse self-intersection, such that $\gamma$ is equivalent  to subpaths of the natural lift of $\Gamma'$. We note that, if $\gamma'$ is the natural lift of $\Gamma'$ and $\check{\gamma'}$ is the lift of $\gamma'$ to $\Aa$, then either $\check{\gamma'}$ is periodic or there exists some non-zero integer $j$ such that $\check{\gamma'}(t+1)=\check{\gamma'}(t)+(j,0)$ for all $t$. In the first case, since $\check{\gamma}$ is a subpath of $\check{\gamma'}$, one deduces that $\Gamma'$ satisfies condition (ii) of Proposition \ref{existencecontractiblepoints}, and in the second case $\Gamma'$ satisfies condition (i) of Proposition \ref{existencecontractiblepoints}.
  \end{proof}

From Lemmas \ref{lemmaproofProposition2} and \ref{lemma} we deduce the following result.

\begin{leem}\label{lemmaintersectiontransverse1}
Let $\fonc{\gamma}{[a,b]}{\A}$ be an admissible $\mathcal{F}$-transverse path. Suppose that there are real numbers $a<a'<b'<b$ such that $\gamma(a')=\gamma(b')$ and $t\mapsto \phi_{\gamma(t)}$ is injective on $[a',b')$. Let $U_{\Gamma'}$ be the open annulus associated to the loop naturally defined by the closed path $\gamma|_{[a',b']}$. Suppose furthermore that $\gamma(a)$ and $\gamma(b)$ belong to the same connected component of the complement of $U_{\Gamma'} $. Suppose that $\fonc{\check{\gamma}}{[a,b]}{\Aa}$ is a lift of $\gamma$ to $\Aa$ that has no $\check{\mathcal{F}}$-transverse self-intersection. Then condition (i) or (ii) of Proposition \ref{existencecontractiblepoints} is satisfied.
\end{leem}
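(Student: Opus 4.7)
My plan is to use Lemma \ref{lemmaproofProposition2} to produce an $\mathcal{F}$-transverse self-intersection of $\gamma|_{[a,b]}$, then to exploit the hypothesis that $\check{\gamma}$ itself has no $\check{\mathcal{F}}$-transverse self-intersection in order to upgrade this to a $\check{\mathcal{F}}$-transverse intersection between $\check{\gamma}$ and a non-trivial translate $\check{\gamma}+(j,0)$, and finally to invoke Lemma \ref{lemma}.

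First, I would apply Lemma \ref{lemmaproofProposition2} with the pair $(t,t')=(a,b)$: both $\gamma(a)$ and $\gamma(b)$ lie outside $U_{\Gamma'}$ and, by assumption, in the same connected component of its complement, which is exactly the configuration the lemma addresses. The conclusion is that $\gamma|_{[a,b]}$ admits an $\mathcal{F}$-transverse self-intersection. Should the $\mathcal{F}$-bi-recurrence assumption in the statement of Lemma \ref{lemmaproofProposition2} be strictly necessary, I would first use admissibility of $\gamma$ together with Lemma \ref{lemma10LCT} and the density of bi-recurrent points (guaranteed by the invariant measure of full support) to realize $\gamma|_{[a,b]}$ as a subpath of a bi-recurrent admissible transverse path passing through the same loop $\Gamma'$ and annulus $U_{\Gamma'}$.

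To transfer the self-intersection to $\Aa$, I recall that it provides two distinct lifts $\widetilde{\gamma}_1,\widetilde{\gamma}_2$ of $\gamma|_{[a,b]}$ to $\widetilde{\dom}(\mathcal{F})$ intersecting $\widetilde{\mathcal{F}}$-transversally. Since $\widetilde{\dom}(\mathcal{F})$ is simply connected, the inclusion $\dom(\mathcal{F})\hookrightarrow\A$ lifts to a foliation-preserving map $p:\widetilde{\dom}(\mathcal{F})\to\Aa$; the paths $p\circ\widetilde{\gamma}_1$ and $p\circ\widetilde{\gamma}_2$ are two lifts of $\gamma|_{[a,b]}$ to $\Aa$ with a $\check{\mathcal{F}}$-transverse intersection, and, being lifts of the same path in $\A$, they differ by a translation $(j,0)$ with $j\in\Z$. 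After composing with a deck translation of $\check{\pi}$, we may assume one of them is $\check{\gamma}$, so $\check{\gamma}$ and $\check{\gamma}+(j,0)$ intersect $\check{\mathcal{F}}$-transversally. The hypothesis on $\check{\gamma}$ rules out $j=0$, and Lemma \ref{lemma} then yields condition (i) or (ii) of Proposition \ref{existencecontractiblepoints}. The main delicate point is precisely the exclusion of $j=0$: this is where the no-self-intersection hypothesis on $\check{\gamma}$ does the essential work, preventing the self-intersection produced in the base from being a trivial consequence of the covering $\Aa\to\A$.
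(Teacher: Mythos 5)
Your proposal is correct and follows essentially the same route as the paper: reduce to a bi-recurrent representative via density and Lemma \ref{lemma10LCT}, apply Lemma \ref{lemmaproofProposition2} to obtain an $\mathcal{F}$-transverse self-intersection of $\gamma$, use the no-self-intersection hypothesis on $\check{\gamma}$ to force the corresponding deck element to project to a non-zero $j\in\Z$, and conclude with Lemma \ref{lemma}. The only difference is that you spell out the covering-space bookkeeping (lifting through $\widetilde{\dom}(\mathcal{F})\to\Aa$ to identify $j$), which the paper leaves implicit.
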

\begin{proof}
By density of bi-recurrent points of $f$ and Lemma \ref{lemma10LCT} we can suppose that $\gamma$ is equivalent to a subpath  of the whole transverse trajectory of a bi-recurrent point. Since $\gamma(a)$ and $\gamma(b)$ belong to the same connected component of the complement of $U_{\Gamma'}$ we know, by Lemma \ref{lemmaproofProposition2}, that the path $\gamma$ has a $\mathcal{F}$-transverse self-intersection. Since $\check{\gamma}$ has no $\check{\mathcal{F}}$-transverse self-intersection, there exists a non-zero integer $j$ such that $\check{\gamma}$ and $\check{\gamma}+(j,0)$ intersect $\check{\mathcal{F}}$-transversally. Hence from Lemma \ref{lemma}, we know that condition (i) or (ii) of Proposition \ref{existencecontractiblepoints} is satisfied.  This completes the proof of the lemma.
\end{proof}

\begin{leem}\label{corolemmaintersectiontransverse3}
   Let $\fonc{\gamma}{[a,b]}{\A}$ be an admissible $\mathcal{F}$-transverse path and let $\fonc{\check{\gamma}}{[a,b]}{\Aa}$ be a lift of $\gamma$. Suppose that there exist a leaf $\check{\phi}$ of $\check{\mathcal{F}}$ and three distinct integers $j_i$, $1\leq i\leq 3$, such that $\check{\gamma}$ crosses each $\check{\phi}+(j_i,0)$. Then one of conditions of Proposition \ref{existencecontractiblepoints} is satisfied.
\end{leem}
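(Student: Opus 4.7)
The plan is a standard two-step reduction followed by a dichotomy on self-intersection behaviour.

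Step 1 (Reduction to a bi-recurrent path). Since $f$ preserves a Borel probability measure of full support, the bi-recurrent points of $f$ are dense in $\A$. By Lemma \ref{lemma10LCT}, one can pick a bi-recurrent point $z^{\ast}$ such that the admissible path $\gamma$ is equivalent to a subpath of the whole transverse trajectory $\gamma^{\ast}:=I^{\Z}_{\mathcal{F}}(z^{\ast})$; this $\gamma^{\ast}$ is itself admissible and $\mathcal{F}$-bi-recurrent. I would then choose the lift $\check\gamma^{\ast}$ of $\gamma^{\ast}$ to $\Aa$ so that the corresponding subpath is equivalent in $\Aa$ to the given $\check\gamma$. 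Since equivalent transverse paths meet the same leaves of $\check{\mathcal{F}}$, the lift $\check\gamma^{\ast}$ crosses each of $\check\phi+(j_i,0)$ for $i=1,2,3$.

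Step 2 (Dichotomy on self-intersection of $\gamma^{\ast}$). If $\gamma^{\ast}$ has no $\mathcal{F}$-transverse self-intersection, then the data $(\gamma^{\ast},\check\gamma^{\ast},\check\phi+(j_1,0),j_2-j_1)$ directly verifies condition (iii) of Proposition \ref{existencecontractiblepoints}, and we are done. Otherwise, lift the responsible self-intersection to $\widetilde{\dom}(\mathcal{F})$ and project the associated non-trivial deck transformation to $\pi_1(\A)=\Z$, obtaining an integer $k$.

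If $k\neq 0$, then in $\Aa$ the paths $\check\gamma^{\ast}$ and $\check\gamma^{\ast}+(k,0)$ intersect $\check{\mathcal{F}}$-transversally; a finite admissible subpath of $\check\gamma^{\ast}$ realizes this intersection and, by Lemma \ref{lemma}, produces condition (i) or (ii). If $k=0$, the self-intersection lifts to $\Aa$ as a genuine self-intersection of $\check\gamma^{\ast}$ with itself, which by itself bounds only a contractible closed subloop. Here the three-translate hypothesis becomes essential: using Proposition \ref{proposition20LCT} at the self-intersection point and concatenating with the pieces of $\check\gamma^{\ast}$ that cross the successive translates $\check\phi+(j_i,0)$ via Lemma \ref{corollary22LCT}, one produces admissible paths realizing a genuine horizontal displacement; feeding these into Lemma \ref{lemma30LCT} yields a linearly admissible transverse loop whose lift to $\Aa$ either has non-trivial rotation in the annulus direction (condition (i)) or is itself a loop intersecting a non-trivial horizontal translate of itself $\check{\mathcal{F}}$-transversally (condition (ii)).

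The principal obstacle is the subcase $k=0$: one must combine a self-intersection that in the annulus bounds a contractible loop with the horizontal travel witnessed by crossings of three translates of $\check\phi$, in order to manufacture an annulus-wrapping $\check{\mathcal{F}}$-transverse intersection. The requirement of three (rather than two) translates appears precisely here, since among three pairwise disjoint lines in the plane one always separates the other two, and this middle leaf prevents the horizontal motion of $\check\gamma^{\ast}$ from being absorbed into a single ``bubble'' around a singularity of $\mathcal{F}$, which is what would otherwise allow $k=0$ to give no useful annulus-direction information.
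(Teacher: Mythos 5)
Your Step 1 and the no-self-intersection branch match the paper exactly (reduce to the whole transverse trajectory $\gamma'$ of a bi-recurrent point via Lemma \ref{lemma10LCT}; if it has no $\mathcal{F}$-transverse self-intersection, the three crossings give condition (iii) directly), and your $k\neq 0$ branch is the paper's Lemma \ref{lemma}. The genuine gap is the case you yourself flag as the principal obstacle, $k=0$: there you do not give an argument, you assert the conclusion. Lemma \ref{lemma30LCT} only produces a linearly admissible transverse loop $\Gamma''$ with a $\mathcal{F}$-transverse self-intersection containing the prescribed subpaths; it gives no control whatsoever on the lift $\check{\gamma}''$ to $\Aa$ --- neither that it ``has non-trivial rotation in the annulus direction'' nor that, when it closes up into a loop $\check{\Gamma}''$, it intersects some horizontal translate of itself $\check{\mathcal{F}}$-transversally. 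The second alternative is exactly the nontrivial point, and it is where the three-translate hypothesis is really consumed: the paper's proof dichotomizes on whether $\check{\gamma}''$ is closed (if not, condition (i) is immediate), and in the closed case invokes the argument of Proposition 43 of \cite{lct} to show that a $\check{\mathcal{F}}$-recurrent transverse loop in $\Aa$ crossing three distinct translates $\check{\phi}+(j_i,0)$ of a single leaf must have two integer translates $\check{\Gamma}''-(j_i,0)$, $\check{\Gamma}''-(j_{i'},0)$ intersecting $\check{\mathcal{F}}$-transversally, which yields condition (ii) with $j=j_i-j_{i'}$. Your sketch (``Proposition \ref{proposition20LCT} at the self-intersection, concatenate via Lemma \ref{corollary22LCT}, feed into Lemma \ref{lemma30LCT}'') never produces such a transverse intersection between distinct translates, so the $k=0$ case is not closed.

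A secondary point: the separation heuristic you offer at the end (``among three pairwise disjoint lines in the plane one always separates the other two'') is false for general disjoint lines, and in any case it is not the mechanism; the role of the third translate is combinatorial, via Lemma \ref{intersection transverse to a line}-type arguments applied to the translates of the recurrent loop, as in the cited Proposition 43. To repair your proof, replace the $k=0$ paragraph by the paper's dichotomy on whether the distinguished lift of the loop from Lemma \ref{lemma30LCT} is closed, and supply (or cite) the Proposition 43 argument in the closed case.
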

\begin{proof}
 By density of the set of bi-recurrent points of $f$ and Lemma \ref{lemma10LCT} we can suppose that $\gamma$ is equivalent to a subpath  of  $\gamma'$, the whole transverse trajectory of a bi-recurrent point. Since the lift $\check{\gamma}'$ of $\gamma'$ that contains a subpath equivalent to $\check{\gamma}$ crosses each $\check{\phi}+(j_i,0)$, $1\leq i\leq 3$, it is sufficient to considerer the case where $\gamma'$ has a $\mathcal{F}$-transverse self-intersection. Otherwise $\gamma'$ satisfies condition (iii) of Proposition \ref{existencecontractiblepoints}. Hence by Lemma \ref{lemma30LCT} there exists a linearly admissible transverse loop $\Gamma''$ with a $\mathcal{F}$-transverse self-intersection such that $\gamma$ is equivalent to subpaths of the natural lift of $\Gamma''$. Write $\gamma''$ for the natural lift of $\Gamma''$ and for $\check{\gamma}''$ the lift of $\gamma''$ that contains a subpath equivalent to $\check{\gamma}$. We can suppose that  $\check{\gamma}''$ is also a loop $\check{\Gamma}''$, otherwise $\Gamma''$ satisfies condition (i) of Proposition \ref{existencecontractiblepoints}. In this case $\check{\Gamma}''$ is a $\check{\mathcal{F}}$-recurrent transverse path that crosses each $\check{\phi}+(j_i,0)$, $1\leq i\leq 3$. Hence we can prove that there exist $i\neq i'$ such that $\check{\Gamma}''-(j_i,0)$ and $\check{\Gamma}''-(j_{i'},0)$ intersect $\check{\mathcal{F}}$-transversally (see the proof of Proposition 43 from \cite{lct}). This implies that $\check{\Gamma}''$ and $\check{\Gamma}''+(j,0)$ intersect $\check{\mathcal{F}}$-transversally, where $j=j_i-j_{i'}$. Hence $\Gamma''$ satisfies condition (ii) of Proposition \ref{existencecontractiblepoints}. This completes the proof.
\end{proof}

From the previous lemmas and Proposition \ref{propocle} we deduce the following corollaries.

\begin{coro}\label{corolemmaintersectiontransverse1}
   Let $\fonc{\gamma}{[a,b]}{\A}$ be an admissible $\mathcal{F}$-transverse path and let $\fonc{\check{\gamma}}{[a,b]}{\Aa}$ be a lift of $\gamma$. Suppose that there exists an integer $j$ with $\abs{j}\geq 2$ such that $ \phi_{\check{\gamma}(b)}= \phi_{\check{\gamma}(a)}+(j,0)$. Then one of conditions of Proposition \ref{existencecontractiblepoints} is satisfied.
\end{coro}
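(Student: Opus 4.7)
The plan is to invoke Lemma~\ref{corolemmaintersectiontransverse3} by exhibiting three distinct integer translates of a single leaf of $\check{\mathcal{F}}$ that are all crossed by $\check{\gamma}$. Reversing the orientation of $\gamma$ if necessary, we may assume $j\geq 2$. Set $\check{\phi}:=\phi_{\check{\gamma}(a)}$ and first consider the main case where the translates $\check{\phi}+(k,0)$, $k\in\Z$, are pairwise disjoint lines of $\Aa$; the degenerate case where $\check{\phi}$ is invariant under some non-trivial integer translation (so that $\pi(\check{\phi})$ is an essential loop of $\A$) will be handled separately, by applying Lemma~\ref{lemmaintersectiontransverse1} to the loop structure that $\gamma$ induces on $\pi(\check{\phi})$.

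In the main case, $\check{\phi}\cap(\check{\phi}+(1,0))=\emptyset$, so Proposition~\ref{propocle} applies with $\delta=\check{\phi}$ viewed as an arcwise connected set; the proof of that proposition uses only arcwise connectedness of $\delta$ via Brown's Lemma~\ref{lemma3GKT}, so the extension from segments to lines is harmless. We therefore obtain one of the two alternatives:
\begin{itemize}
\item[(a)] $\check{\gamma}\cap(\check{\gamma}+(1,0))\neq\emptyset$, or
\item[(b)] $\check{\gamma}$ meets $\check{\phi}+(i,0)$ for every $i\in\{0,1,\dots,j\}$.
\end{itemize}
Since $\check{\gamma}$ is transverse to $\check{\mathcal{F}}$, meeting any of these leaves amounts to crossing it. In case (b), $j\geq 2$ gives at least three distinct translates crossed, and Lemma~\ref{corolemmaintersectiontransverse3} concludes.

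In case (a), choose $\check{z}=\check{\gamma}(t_1)=\check{\gamma}(t_2)+(1,0)$ and let $\check{\psi}=\phi_{\check{z}}$, so that $\check{\gamma}$ crosses both $\check{\psi}$ and $\check{\psi}-(1,0)$. If $\check{\psi}=\check{\phi}+(k,0)$ is itself a translate of $\check{\phi}$, then $\check{\gamma}$ crosses the translates with indices in $\{0,j,k,k-1\}$; since $j\geq 2$, an elementary case check shows that at least three of these integers are distinct (the only way to have only two is $\{k,k-1\}\subset\{0,j\}$, which forces $j=1$), and Lemma~\ref{corolemmaintersectiontransverse3} applies. If instead $\check{\psi}$ is not a translate of $\check{\phi}$, we look at whether the intersection of $\check{\gamma}$ with $\check{\gamma}+(1,0)$ at the common leaf $\check{\psi}$ is $\check{\mathcal{F}}$-transverse. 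If so, Lemma~\ref{lemma} immediately yields condition (i) or (ii) of Proposition~\ref{existencecontractiblepoints}. If not, we use the density of bi-recurrent points of $f$ and Lemma~\ref{lemma10LCT} to extend $\check{\gamma}$ to a lift of the whole transverse trajectory of a bi-recurrent point and to replace $\check{\gamma}$ by a representative of its $\check{\mathcal{F}}$-equivalence class whose translate by $(1,0)$ is disjoint from it; this modified path still crosses $\check{\phi}$ and $\check{\phi}+(j,0)$, so case (b) of Proposition~\ref{propocle} now applies, and we conclude as before.

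The principal obstacle is the last, non-transverse subcase: passing from a merely topological intersection of $\check{\gamma}$ with $\check{\gamma}+(1,0)$ to the disjoint-translate situation required by case (b) of Proposition~\ref{propocle} is not entirely routine, since modifying $\check\gamma$ within its equivalence class simultaneously modifies its translate $\check\gamma+(1,0)$. The key point is that the foliation-tangential nature of the intersection lets one locally push the two sheets apart without altering the set of leaves met, and the bi-recurrent extension supplied by Lemma~\ref{lemma10LCT} provides the room to do so globally.
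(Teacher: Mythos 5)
Your reduction to three crossed translates (case (b), and the sub-case of (a) where the common leaf is itself a translate of $\check{\phi}$) and your use of Lemma~\ref{lemma} when $\check{\gamma}$ and $\check{\gamma}+(1,0)$ intersect $\check{\mathcal{F}}$-transversally are fine, and broadly parallel to the paper. The genuine gap is exactly the step you flag and then do not prove: in the remaining sub-case you claim that, because the intersection of $\check{\gamma}$ with $\check{\gamma}+(1,0)$ is not $\check{\mathcal{F}}$-transverse, one can replace $\check{\gamma}$ by an $\check{\mathcal{F}}$-equivalent path $\check{\gamma}''$ with $\check{\gamma}''\cap(\check{\gamma}''+(1,0))=\emptyset$. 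The fact quoted in the preliminaries only allows one to perturb two paths \emph{independently} within their equivalence classes (and it is stated for non-singular planar foliations, so on $\Aa$ it must anyway be run in the universal cover of $\dom(\check{\mathcal{F}})$, where disjointness does not automatically descend); here the second path is forced to be the $(1,0)$-translate of the first, and an equivariant disjoining statement of this kind is precisely what would need a proof. The difficulty is compounded by the fact that you never invoke Lemma~\ref{corollary24LCT}, so your $\check{\gamma}$ may well have $\check{\mathcal{F}}$-transverse self-intersections, and the sentence about ``pushing the two sheets apart'' using bi-recurrence is an assertion, not an argument. Your vaguely deferred degenerate case (where $\check{\phi}$ is invariant under a non-trivial translation) is a smaller issue: it is in fact vacuous, since then $\pi(\check{\phi})$ is an essential closed leaf and a positively transverse path can cross it at most once, contradicting the hypothesis that $\check{\gamma}$ meets both $\check{\phi}$ and $\check{\phi}+(j,0)$; but you neither prove this nor give a workable substitute (Lemma~\ref{lemmaintersectiontransverse1} requires a lift with no $\check{\mathcal{F}}$-transverse self-intersection, which you have not arranged).

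The paper avoids the unproved disjoining claim altogether, and you should compare your plan with its route: first apply Lemma~\ref{corollary24LCT} to get an admissible path $\check{\gamma}'$ with \emph{no} $\check{\mathcal{F}}$-transverse self-intersection and the same endpoint leaves; by Lemma~\ref{corolemmaintersectiontransverse3} one may assume $\check{\gamma}'$ misses the intermediate translates $\check{\phi}+(i,0)$, $1\leq i\leq j-1$; then Proposition~\ref{propocle} forces a suitable subpath $\check{\gamma}'_0$ to meet $\check{\gamma}'_0+(1,0)$. Rather than trying to remove this intersection, the paper exploits it: the projection $\gamma'_0$ then contains a simple sub-loop whose associated annulus does not contain the common leaf carrying both endpoints of $\gamma'_0$, so Lemma~\ref{lemmaintersectiontransverse1} (whose hypotheses are available precisely because the lift has no transverse self-intersection) yields condition (i) or (ii) of Proposition~\ref{existencecontractiblepoints}, via a transverse intersection of the lift with a non-trivial translate. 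In other words, the tool designed for your problematic sub-case is the combination Lemma~\ref{corollary24LCT} plus Lemma~\ref{lemmaintersectiontransverse1}, not an equivariant disjointness statement; as written, your proof does not close this case.
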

\begin{proof}
 We will write $\check{\phi}$ by the leaf $\phi_{\check{\gamma}(a)}$ of $\check{\mathcal{F}}$. By Lemma \ref{corollary24LCT}, there exists an admissible $\check{\mathcal{F}}$-transverse path $\fonc{\check{\gamma}'}{[a,b]}{\Aa}$ such that $\check{\gamma}'$ has no $\check{\mathcal{F}}$-transverse self-intersection and $\phi_{\check{\gamma}'(a)}= \check{\phi}$, $\phi_{\check{\gamma}'(b)}= \check{\phi}+(j,0)$. We will suppose that $j\geq 2$, the other case is proved similarly. By Lemma \ref{corolemmaintersectiontransverse3} we can suppose that for every  $i\in\{1,\cdots,j-1\}$ we have that $\check{\gamma}'$ does not meet $\check{\phi}+(i,0)$.  Let us fix  $i\in\{1,\cdots,j-1\}$ and consider
  $$  a'_0:=\max\left\{ t\in [a,b] : \check{\gamma}'(t)\in \bigcup_{n\in \N} (\check{\phi}+(i-n,0))  \right\},   $$
  $$  b'_0:=\min\left\{ t\in [a'_0,b] : \check{\gamma}'(t)\in \bigcup_{n\in \N} (\check{\phi}+(i+n,0))  \right\},   $$
and let $\check{\gamma}'_0=\check{\gamma}'|_{[a'_0,b'_0]}$. Note that, as $\check{\phi}$ is disjoint from $\check{\phi}+(1,0)$, we can apply Proposition \ref{propocle} with $\check{\phi}$ in place of $\delta$, and deduce that $\check{\gamma}'_0$ meets $\check{\gamma}'_0+(1,0)$. Therefore there exist $a'_0\le t_1<s_1\le b'_0$ such that $\phi_{\check{\gamma}'(t_1)}=\phi_{\check{\gamma}'(s_1)+(1,0)}$ and since, by construction of $a'_0$ and $b'_0$, $\phi_{\check{\gamma}'(t_1)}$ cannot be equal to $\check{\phi}$, we have that $a'_0<t_1$ and $s_1<b'_0$. Hence $\check{\gamma}'_0$ projects onto an admissible $\mathcal{F}$-transverse path $\gamma'_0$ for which there exist real numbers $t_1\le a''_0<b''_0\le s_1$ such that $\gamma'_0(a''_0)=\gamma'_0(b''_0)$ and $t\mapsto \phi_{\gamma'_0(t)}$ is injective on $[a''_0,b''_0)$. Let $U_{\Gamma''_0}$ be the open annulus associated to the loop naturally defined by the closed path $\gamma'_0|_{[a''_0,b''_0]}$.  We note that by construction $\gamma'(a'_0)$ and $\gamma'(b'_0)$ belong to the same leaf (the projection of $\check{\phi}$) and since $\check{\gamma}'|_{(a'_0,b'_0)}$ does not intersect the translates of $\check{\phi}$, $\phi_{\gamma'_0(a'_0)}$ does not belong to the annulus $U_{\Gamma''_0}$. We deduce the corollary from Lemma \ref{lemmaintersectiontransverse1}.
\end{proof}

We need a slightly stronger result than the previous corollary, which does not have restrictions on the initial and final points of $\gamma$.

\begin{coro}\label{corolemmaintersectiontransverse11}
   Let $\fonc{\gamma}{[a,b]}{\A}$ be an admissible $\mathcal{F}$-transverse path and let $\fonc{\check{\gamma}}{[a,b]}{\Aa}$ be a lift of $\gamma$. Suppose that there exist a leaf $\check{\phi}$ of $\check{\mathcal{F}}$ and an integer $j$ with $\abs{j}\geq 2$, such that $\check{\gamma}$ crosses both $\check{\phi}$ and $\check{\phi}+(j,0)$. Then one of conditions of Proposition \ref{existencecontractiblepoints} is satisfied.
\end{coro}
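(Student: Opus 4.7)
The plan is to reduce this corollary directly to Corollary~\ref{corolemmaintersectiontransverse1} by restricting $\check{\gamma}$ to an appropriate subinterval.

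First I would use the hypothesis that $\check{\gamma}$ crosses $\check{\phi}$ to pick a parameter $t_1 \in [a,b]$ with $\phi_{\check{\gamma}(t_1)} = \check{\phi}$, and use that $\check{\gamma}$ crosses $\check{\phi}+(j,0)$ to pick $t_2 \in [a,b]$ with $\phi_{\check{\gamma}(t_2)} = \check{\phi}+(j,0)$. Note that the two leaves are distinct (since $j \neq 0$ and $\check{\mathcal F}$ is invariant under the covering translation $\check z \mapsto \check z + (1,0)$), so $t_1 \neq t_2$. Swapping the roles of $\check{\phi}$ and $\check{\phi}+(j,0)$ if necessary (i.e.\ replacing $j$ by $-j$, which preserves the condition $|j| \geq 2$), I may assume $t_1 < t_2$.

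Next I would set $\check{\gamma}' := \check{\gamma}|_{[t_1, t_2]}$. This is a subpath of $\check{\gamma}$, hence its projection $\gamma'$ to $\A$ is a subpath of the admissible transverse path $\gamma$, and so it is itself admissible. Moreover, by construction,
\[
\phi_{\check{\gamma}'(t_1)} = \check{\phi}, \qquad \phi_{\check{\gamma}'(t_2)} = \check{\phi} + (j,0),
\]
so that $\phi_{\check{\gamma}'(t_2)} = \phi_{\check{\gamma}'(t_1)} + (j,0)$ with $|j| \geq 2$. This is precisely the hypothesis of Corollary~\ref{corolemmaintersectiontransverse1} applied to the pair $(\gamma', \check{\gamma}')$.

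Applying that corollary then gives at once that one of conditions (i), (ii), (iii) of Proposition~\ref{existencecontractiblepoints} is satisfied, which is the desired conclusion. There is no real obstacle here beyond verifying that "crossing a leaf" gives a parameter at which the path meets the leaf and that admissibility descends to subpaths; both points are immediate from the definitions recalled in Section~2.
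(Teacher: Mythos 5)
Your reduction target is the right one --- the paper also concludes by applying Corollary \ref{corolemmaintersectiontransverse1} to a subpath of $\check{\gamma}$ joining $\check{\phi}$ to $\check{\phi}+(j,0)$ --- but the step you dismiss as immediate, namely that admissibility passes to the subpath $\check{\gamma}|_{[t_1,t_2]}$, is exactly where the real content lies. In the terminology of Section 2, a subpath of a path admissible of order $n$ is a priori only admissible of order $\leq n$, whereas the proof of Corollary \ref{corolemmaintersectiontransverse1} starts by invoking Lemma \ref{corollary24LCT}, which is stated (and used) for paths admissible of some \emph{exact} order $n$. One cannot repair this by enlarging the subpath back to the order-$n$ path from which it comes, because that destroys the hypothesis $\phi_{\check{\gamma}(b)}=\phi_{\check{\gamma}(a)}+(j,0)$ on the endpoint leaves. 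So, read against its proof rather than its loosely worded statement, Corollary \ref{corolemmaintersectiontransverse1} is not directly applicable to $\gamma|_{[t_1,t_2]}$, and your ``immediate from the definitions'' claim hides the gap that this corollary is meant to close.

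The paper bridges precisely this point. As in the proof of Lemma \ref{corolemmaintersectiontransverse3} (using density of bi-recurrent points, Lemma \ref{lemma10LCT} and Lemma \ref{lemma30LCT}), either the relevant bi-recurrent whole transverse trajectory has no $\mathcal{F}$-transverse self-intersection --- in which case condition (iii) of Proposition \ref{existencecontractiblepoints} holds outright --- or one obtains a linearly admissible transverse loop $\Gamma''$ with a $\mathcal{F}$-transverse self-intersection whose natural lift contains a path equivalent to $\gamma$. For subpaths of such a loop, Lemma \ref{Proposition19LCT} upgrades ``admissible of order $\leq n$'' to ``admissible of order $n$'', and only after this upgrade is Corollary \ref{corolemmaintersectiontransverse1} applied to the piece joining $\check{\phi}$ to $\check{\phi}+(j,0)$. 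If you insert this step (or justify an exact-order version of admissibility for your restricted path by some other means), your argument becomes the paper's proof; without it, the appeal to Corollary \ref{corolemmaintersectiontransverse1} is unjustified.
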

\begin{proof}
Since in the proof of Lemma \ref{corolemmaintersectiontransverse3}  we can obtain a linearly admissible transverse loop $\Gamma''$ with a $\mathcal{F}$-transverse self-intersection such that $\gamma$ is equivalent to subpaths of the natural lift $\gamma''$ of $\Gamma''$. Consider real numbers $a\le a'<b'\le b$ such that $\phi_{\check{\gamma}(a')}=\check{\phi}$ and $\phi_{\check{\gamma}(b')}=\check{\phi}+(j,0)$, and let $t_1<t_2$ be such that $\gamma''|_{[t_1,t_2]}$ is equivalent to $\gamma|_{[a',b']}$. Since $\Gamma''$ has a $\mathcal{F}$-transverse self-intersection, there exists some $t_2<t_3<t_4$ such that $\gamma''|_{[s_1,s_2]}$ has a $\mathcal{F}$-transverse self-intersection for every $s_1<t_3$ and every $s_2>t_4$. Since $\Gamma''$ is admissible, this implies by Lemma \ref{Proposition19LCT} that for every $s_1<t_3<t_4<s_2$ there exists $n\ge 1$ such that $\gamma''|_{[s_1,s_2]}$ is admissible of order $n$. Let $k$ be a positive integer sufficiently large such that $t_1+k>t_4$, and let $\check{\gamma''}$ be the lift of $\gamma''$ such that $\phi_{\check{\gamma}''(t_1)}=\check{\phi}$. Then there exists an integer $r$ such that $\phi_{\check{\gamma}''(t_2)}=\check{\phi}+(j,0),\, \phi_{\check{\gamma}''(t_1+k)}=\check{\phi}+(r,0)$ and $\phi_{\check{\gamma}''(t_2+k)}=\check{\phi}+(r+j,0)$. If $\abs{r+j}\ge 2$, we can apply the previous corollary to $\gamma''|_{[t_1,t_2+k]}$, and if not, then $\abs{r-j}\ge 2$ and we can apply the previous corollary to $\gamma''|_{[t_2,t_1+k]}$.
\end{proof}

\subsection{Proof of Proposition \ref{existencecontractiblepoints}}

In this subsection, we show Proposition \ref{existencecontractiblepoints}. In the sequel, we assume that $f$ is a homeomorphism of $\A$ that is isotopic to the identity and preserves a Borel probability measure of full support, and that $I$ is a maximal identity isotopy of $f$. We write $\check{I}$ for the lifted isotopy of $I$ and $\check{f}$ for the lift of $f$ associated to $I$. This isotopy is a maximal identity isotopy of $\check{f}$. We suppose that $\mathcal{F}$ is an oriented singular foliation with $\dom(\mathcal{F})=\dom(I)$ which is transverse to $I$ and we write $\check{\mathcal{F}}$ for its lift to $\check{\A}$, which is transverse to $\check{I}$.\\

\textit{Let us suppose first that there exist a linearly admissible transverse loop $\fonc{\Gamma}{\T{1}}{\A}$ with a $\mathcal{F}$-transverse self-intersection, a lift $\check{\gamma}$ of the natural lift of $\Gamma$ to $\Aa$, and a non-zero integer $j$ such that for every $t\in\R$ we have $\check{\gamma}(t+1)=\check{\gamma}(t)+(j,0)$}. Suppose that $\Gamma$ satisfies the condition $(Q_q)$ for some integer $q\geq 1$. By Proposition \ref{proposition26LCT} we have that for every rational number $r/s\in (0,1/q]$ written in an irreducible way, the loop $\Gamma^r$ is associated to a periodic orbit of period $s$. In particular, the map $\check{z}\mapsto \check{f}^q(\check{z})+(j,0)$ or $\check{z}\mapsto \check{f}^q(\check{z})-(j,0)$ has a fixed point. Since $\check{f}$ has fixed points and the rotation set of $\check{f}$ is an interval (Theorem \ref{rotationset is a interval}) the proof of Proposition \ref{existencecontractiblepoints} follows in the first case.\\

\textit{Now let us suppose that there exists a linearly admissible transverse loop $\fonc{\Gamma}{\T{1}}{\A}$ with a $\mathcal{F}$-transverse self-intersection, a lift $\check{\gamma}$ of the natural lift of $\Gamma$ to $\Aa$ and a non-zero integer $j$ such that $\check{\gamma}$ is the natural lift of a loop $\check{\Gamma}$ and $\check{\Gamma}$ and $\check{\Gamma}+(j,0)$ have a $\check{\mathcal{F}}$-transverse self-intersection}. Choose an integer $L$ sufficient large, such that $\check{\gamma}|_{[0,L]}$ has a $\check{\mathcal{F}}$-transverse intersection with $\check{\gamma}|_{[0,L]}+(j,0)$ at $\check{\gamma}(t)=\check{\gamma}(s)+(j,0)$ with $s<t$. The loop $\check{\Gamma}$ being linearly admissible, there exists an integer $q\geq 1$ such that $\check{\gamma}|_{[-L,2L]}$ is admissible of order $q$. Applying Lemma \ref{corollary22LCT} with $\check{\gamma}|_{[-L,2L]}+(ij,0)$ in place of $\gamma_i$ or with $\check{\gamma}|_{[-L,2L]}-(ij,0)$ in place of $\gamma_i$ yields that for every integer $n\geq 1$, the paths $$  \prod_{i=0}^{n-1} \left(\check{\gamma}|_{[s-L,t+L]}+(ij,0)\right) \quad \text{ and } \quad  \prod_{i=0}^{n-1} \left(\check{\gamma}|_{[t-L,s+L]}-(ij,0)\right), $$
are admissible of order $\le nq$, and both have $\check{\mathcal{F}}$-transverse self-intersection as both paths contain as a subpath $\check{\gamma}|_{[0,L]}$. Lemma \ref{Proposition19LCT} shows that both paths are therefore admissible of order $nq$. Therefore the paths $\check{\gamma}|_{[s-L,t+L]}$ and $\check{\gamma}|_{[t-L,s+L]}$ project onto closed paths of $\A$ and the two loops naturally defined have $\mathcal{F}$-transverse self-intersection and are linearly admissible. We conclude as in the first case. This completes the proof of Proposition \ref{existencecontractiblepoints} in the second case.\\

\textit{Finally, let us suppose that there exist an admissible $\mathcal{F}$-bi-recurrent path $\fonc{\gamma}{\R}{\A}$ which has no $\mathcal{F}$-transverse self-intersection, a lift $\check{\gamma}$ of $\gamma$ to $\Aa$, a leaf $\check{\phi}$ of $\check{\mathcal{F}}$ and a non-zero integer $j$ such that $\check{\gamma}$ crosses both $\check{\phi}$  and $\check{\phi}+(j,0)$}. By Proposition \ref{proposition2LCT} the path $\gamma$ is equivalent to the natural lift of a transverse simple loop $\Gamma$, denoted still $\gamma$. Consider the set $U_\Gamma=\cup_{t\in \R} \phi_{\gamma(t)}$ which is an essential open annulus in $\A$, because $\check{\gamma}$ crosses both $\check{\phi}$  and $\check{\phi}+(j,0)$. This implies that $\Gamma$ is an essential simple loop, and so $\check{\gamma}$ intersects $\check{\phi}+(k,0)$ for every integer $k$. Note that either for every $t$, $\check{\gamma}(t+1)=\check{\gamma}(t)+(1,0)$ or for every $t$ ,$\check{\gamma}(t+1)= \check{\gamma}(t)-(1,0)$. We assume the first case holds, the other case being similar. Since $\gamma$ is admissible, there exist $L,n>0$ such that $\check{\gamma}|_{[0,L]}$ intersects $\check{\phi}$, $\check{\phi}+(1,0)$ and $\check{\phi}+(2,0)$, and is admissible of order $\le n$. Assume first that there exists $x$ such that the whole transverse trajectory of $x$ has a $\mathcal{F}$-transverse self-intersection and also contains a subpath equivalent to $\gamma|_{[0,L]}$. Then, as in the proof of Lemma \ref{corolemmaintersectiontransverse3}, one deduces that either condition (i) or (ii) of Proposition \ref{existencecontractiblepoints} hold, and we are done. Therefore we can assume that every point whose whole transverse trajectory contains a subpath equivalent to $\gamma|_{[0,L]}$ does not have a $\mathcal{F}$-transverse self-intersection. In particular, if such a point is bi-recurrent, then its whole transverse trajectory is equivalent to $\gamma$.

Since $U_{\Gamma}$ is an essential annulus, its lift $\check{U}_{\Gamma}$ to $\Aa$ is a foliated connected open set, homeomorphic to $\R^{2}$ and invariant by integer translations. Furthermore, if $\check{\phi}_0$ is a leaf of $\check{\mathcal{F}}$ in $\check{U}_{\Gamma}$ then $\check{\phi}_0$ intersects $\check{\gamma}$ and it also is a line in $\check{U}_{\Gamma}$. As we are assuming that $\check{\gamma}(t+1)=\check{\gamma}(t)+(1,0)$ for every $t$, one deduces that $\check{\phi}_0+(k,0)$ belongs to the right of $\check{\phi}_0$ in $\check{U}_{\Gamma}$ if $k>0$ and it belongs to the left of $\check{\phi}_0$ in $\check{U}_{\Gamma}$ if $k<0$. Therefore, if $\fonc{\beta}{[0,1]}{U_{\Gamma}}$ is a closed path positively transversal to $\mathcal{F}$ and $\check{\beta}$ is a lift of $\beta$ to $\Aa$, then $\check{\beta}(1)=\check{\beta}(0)+(l,0)$ with $l$ a strictly positive integer.

Let $z$ be a point such that $I^{n}_{\mathcal{F}}(z)$ contains a subpath equivalent to $\gamma|_{[0,L]}$. One can find, by Lemma \ref{lemma10LCT}, a small open ball $W\subset U_{\Gamma}$, with radius $r<1/4$ and containing $z$, such that for all $x$ in $W$ the whole transverse trajectory of $x$ contains a subpath equivalent to $\gamma|_{[0,L]}$. Let us fix $\check{W}$ a lift of $W$ to $\Aa$.

\begin{leem}
Let $x$ be a bi-recurrent point in $W$, and let $\check{x}\in \check{W}$ be a lift of $x$. If $i>0$ is such that $f^{i}(x)\in W$, then there exists $k>0$ such that $\check{f}^{i}(\check{x})$ belongs to $\check{W}+(k,0)$.
\end{leem}
\begin{proof}
If, by contradiction, $\check{f}^{i}(\check{x})$ belongs to $\check{W}+(k,0)$ for some $k\le 0$ then $I^{i+2}_{\check{\mathcal{F}}}(\check{f}^{-1}(\check{x}))$ must contain a subpath that is equivalent to a simple transverse closed curve $\fonc{\beta}{[0,1]}{\Aa}$ with $\beta(0)=\beta(1)+(k,0)$. But since the whole transverse trajectory of $x$ is contained in $U_{\Gamma}$, this is impossible.
\end{proof}

Since $f$ preserves a Borel probability measure of full support, by the Poincar\'e's Recurrence Theorem one can find a subset $B$ of $W$ with positive measure such that every $x$ in $B$ is bi-recurrent, and also such that, for every $x$ in $B$, there exists a strictly increasing sequence $n_k$ of integers with $\liminf_{k\to\infty}k/n_k=a>0$ where $f^{n_k}(x)$ belongs to $W$ for all $k$. But this implies, by the previous lemma and a simple induction argument, that if $x$ belongs to $B$ and $\check{x}$ is a lift $x$ in $\check{W}$, then $\check{f}^{n_k}(\check{x})$ must belong to $\check{W}+(i_k,0)$ for some integer $i_k\ge k$. One deduces that there exists a subsequence $n_{k_l}$, such that
$$\lim_{l\to\infty}\frac{1}{n_{k_l}}\left(p_1(\check{f}^{n_{k_l}}(\check{x}))- p_1(\check{x})\right)= \lim_{l\to\infty}\frac{i_{k_l}}{n_{k_l}}=\rho\ge a$$
where the limit is taken in $\overline{\R}$. Therefore the rotation set of $f$ contains both $0$ and $\rho>0$. Since this set is an interval, Proposition \ref{existencecontractiblepoints} follows in this case from Theorem \ref{rotationset is a interval}. This completes the proof of Proposition \ref{existencecontractiblepoints}.

\subsection{End of the proof of Theorem B}

\begin{proof}[End of the proof of Theorem B]
 Suppose that Assertion (ii) does not hold. There exists a $f$-recurrent point $z_0\in \A$ such that for every neighborhood $V$ of $z_0$, $\check{V}$ lift of $V$, and $\check{z}_0$ the lift of $z_0$ contained in $\check{V}$ there exists an integer $n \geq 1$ such that $f^n(z_0)\in V$, but $\check{f}^n(\check{z}_0)\in \check{V}+(j,0)$ for some non-zero integer $j$. On the other hand, by Lemma \ref{lemma10LCT}, we can find a neighborhood $W$ of $z_0$ such that for every $z\in W$ the path $I^2_{\mathcal{F}}(f^{-1}(z))$ crosses the leaf $\phi_{z_0}$. Let $\check{W}$ be a lift of $W$ to $\check{\A}$ containing $\check{z}_0$. Therefore Corollary \ref{corolemmaintersectiontransverse11} permits to conclude that for every neighborhood $\check{V}\subset \check{W}$ of $\check{z}_0$ there is an integer $n\geq 1$ such that $f^n(z_0)\in V$, but $\check{f}^n(\check{z}_0)$ belongs to either $\check{V}+(1,0)$ or $\check{V}-(1,0)$. Therefore the future orbit of $\check{z}_0$ accumulates either in $\check{z}_0+(1,0)$ or in $\check{z}_0-(1,0)$. We assume the former holds, the other case being similar. Note that, since $\check{f}$ commutes with integer translations, the future orbit of $\check{z}_0+(1,0)$ must accumulate on $\check{z}_0+(2,0)$, and one concludes by continuity that the future orbit of $\check{z}_0$ also accumulates on $\check{z}_0+(2,0)$. Therefore there must exists integers $0<m_1<m_2$ such that $\check{f}^{m_1}(\check{z}_0)$ belongs to $\check{W}+(1,0)$ and $\check{f}^{m_2}(\check{z}_0)$ belongs to $\check{W}+(2,0)$. Hence the transverse path $I_{\check{\mathcal{F}}}^{m_2+1}(\check{z}_0)$ crosses each $\phi_{\check{z}_0}+(i,0)$, $0\leq i \leq 2$. Therefore Lemma \ref{corolemmaintersectiontransverse3} permits to conclude that one of conditions of Proposition \ref{existencecontractiblepoints} is satisfied. We conclude using Proposition \ref{existencecontractiblepoints} that Assertion (i) of Theorem B holds. This completes the proof of Theorem B.
\end{proof}

\section{Proof of Theorem A}

In this section we prove Theorem A. Let $f$ be a homeomorphism of $\A$ which is isotopic to the identity. We suppose that $f$ preserves a Borel probability measure of full support, and so the set of bi-recurrent points is dense in $\A$. Assume that the connected components of the set of fixed points of $f$ are all compact. Let $\check{f}$ be a lift of $f$ to $\Aa$. Assume that $\check{f}$ has fixed points and that there exists an open topological disk $U\subset \A$ such that the set of fixed points of $\check{f}$ projects into $U$. We will suppose that Case $(2)$ of Theorem A does not hold, i.e. considering $f^{-1}$ instead of $f$ (if necessary) we will suppose that
 \begin{itemize}
   \item[($H_1$)] there exist a compact set $K_0$ of $\A$, a sequence of points $\suii{z}{l}$ in $K_0$, and a sequence of integers $\suii{n}{l}$ which goes to $+\infty$ such that the sequence
   $(f^{n_l}(z_l))_{l\in\N}$ is in $K_0$ and
        $$ p_1(\check{f}^{n_l}(\check{z}_l))-p_1(\check{z}_l)\geq M_l, $$
   where $\check{z}_l\in \check{\pi}^{-1}(z_l)$ and the sequence $\suii{M}{l}$ tends to $+\infty$ as $l$ goes to $+\infty$.
\end{itemize}
We will also suppose that Item (2) of Theorem B holds, otherwise Case (1) of Theorem A is true.
\begin{itemize}
  \item[($H_2$)] the set of bi-recurrent points of $\check{f}$ is dense in $\check{\A}$. In particular $\check{f}$ has no wandering points.
\end{itemize}

Write $\check{I}'$ for the lifted isotopy and $\check{f}$ for the lift of $f$ associated to $I'$. By Theorem \ref{existence maximal isotopy} one can find a maximal identity isotopy $I=(f_t)_{t\in[0,1]}$ larger than $I'$. It can be lifted to an isotopy $\check{I}=(\check{f}_t)_{t\in [0,1]}$ with ${\dom}(\check{I})=\check{\pi}^{-1}(\dom(I))$. This isotopy is a maximal singular isotopy of $\check{f}$ larger than $\check{I}'$. By Theorem \ref{existence transverse foliation} one can  find an oriented singular foliation $\mathcal{F}$ which is transverse to $I$, its lift to $\dom(\check{I})$, denoted by $\check{\mathcal{F}}$ is transverse to $\check{I}$.

\begin{rema}\label{remark2}
  Under above hypotheses we want to prove that there exist an admissible $\check{\mathcal{F}}$-transverse path $\check{\gamma}$, a leaf $\check{\phi}$ of $\check{\mathcal{F}}$ and a non-zero integer $j$ with $\abs{j}\geq 2$, such that $\check{\gamma}$ crosses both $\check{\phi}$ and $\check{\phi}+(j,0)$. Therefore Corollary \ref{corolemmaintersectiontransverse11} permits us to conclude that one of the conditions of Proposition \ref{existencecontractiblepoints} is satisfied, and so Case (1) of Theorem A holds. In fact using Hypothesis $(H_2)$ we can ``extend'' $\check{\gamma}$ to the natural lift of a transverse loop $\check{\Gamma}$ such that $\check{\Gamma}$ and $\check{\Gamma}+(j,0)$ have $\check{\mathcal{F}}$-transverse intersection. Therefore we conclude the following (see remark following Proposition \ref{existencecontractiblepoints})
  \begin{itemize}
    \item[(1')] there exists an integer $q\geq 1$ such that for every irreducible rational number $r/s \in [-1/q,1/q]$ the map the map $\check{z} \mapsto \check{f}^s(\check{z})+(r,0)$ has a fixed point.
  \end{itemize}
\end{rema}

We start recalling that from our suppositions, the set of fixed points of $\check{f}$ is not empty and it projects into an open topological disk $U$ of $\A$. We have the following result.

\begin{leem}\label{lemmadiameterbounded}
  Let $\check{U}$ be a connected component of $\check{\pi}^{-1}(U)$. Let $\mathcal{A}$ be an essential compact annulus of $\A$ and let $\check{\mathcal{A}}$ be the lift of $\mathcal{A}$ to $\Aa$. Then the diameter of the set $\check{U}\cap \check{\mathcal{A}}\cap \sing(\check{\mathcal{F}})$ is bounded.
\end{leem}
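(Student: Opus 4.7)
The plan is to identify $\check{U}\cap \check{\mathcal{A}}\cap \sing(\check{\mathcal{F}})$ with the homeomorphic image of a compact subset of $\A$, from which the bounded diameter claim follows immediately.

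First I would verify that $\sing(\mathcal{F})\subset U$. Since $I$ is maximal and $\mathcal{F}$ is transverse to $I$, one has $\sing(\mathcal{F})=\fix(I)$. Given $z\in\fix(I)$, any lift $\check{z}$ is fixed by the lifted isotopy $\check{I}$, so $\check{z}\in\fix(\check{I})\subset\fix(\check{f})$; the hypothesis $\check{\pi}(\fix(\check{f}))\subset U$ then gives $z\in U$.

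Next I would record two covering-space observations. Since $U$ is simply connected, the restriction $\check{\pi}:\check{\pi}^{-1}(U)\to U$ is a trivial covering, so $\check{\pi}|_{\check{U}}:\check{U}\to U$ is a homeomorphism. On the other hand, $\mathcal{A}$ is essential, so a generator of $\pi_1(\mathcal{A})$ maps to a generator of $\pi_1(\A)$; lifting it produces an arc joining some $\check{z}$ to $\check{z}+(1,0)$, forcing $\check{\pi}^{-1}(\mathcal{A})$ to be connected and therefore equal to $\check{\mathcal{A}}$. In particular $\check{\mathcal{A}}$ is invariant under the deck translations, so a point of $\check{U}$ belongs to $\check{\mathcal{A}}$ if and only if its projection belongs to $\mathcal{A}$. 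Combining these statements,
\[
\check{U}\cap \check{\mathcal{A}}\cap \sing(\check{\mathcal{F}}) \;=\; \bigl(\check{\pi}|_{\check{U}}\bigr)^{-1}\bigl(\mathcal{A}\cap \sing(\mathcal{F})\bigr).
\]

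Finally, $\sing(\mathcal{F})$ is closed in $\A$ and $\mathcal{A}$ is compact, so $\mathcal{A}\cap \sing(\mathcal{F})$ is compact; its image under the continuous map $(\check{\pi}|_{\check{U}})^{-1}:U\to\Aa$ is therefore compact, hence of bounded diameter. The argument is short; the main conceptual ingredient is the use of the topological disk hypothesis on $U$ to view $\check{U}$ as a homeomorphic copy of $U$ inside $\Aa$. The hypothesis that the components of $\fix(f)$ are compact plays no role in this particular lemma, and the nonemptiness of $\fix(\check{f})$ is only needed to ensure that the set in question might be nonempty.
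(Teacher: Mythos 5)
Your proof is correct, but it is organized differently from the paper's. The paper argues by contradiction: it takes a sequence of singularities $\check{x}_n\in\check{U}\cap\check{\mathcal{A}}$ with $p_1(\check{x}_n)$ unbounded, uses compactness of $\sing(\mathcal{F})\cap\mathcal{A}$ to make the projections converge to some $x\in U$, and then a small ball around the lift of $x$ in $\check{U}$ forces $\check{U}\cap(\check{U}+(j,0))\neq\emptyset$ for some $j\neq 0$, which contradicts $U$ being an open topological disk (arcwise connectedness of $\check{U}$ would produce a loop in $U$ that is essential in $\A$). You instead invoke the even-covering property directly: since $U$ is open, connected and simply connected in $\A$, each component of $\check{\pi}^{-1}(U)$ projects homeomorphically onto $U$, and since $\mathcal{A}$ is essential its full preimage is connected and deck-invariant; after checking $\sing(\mathcal{F})=\fix(I)\subset U$ --- a fact the paper uses at this point without justification, and your justification via $\check{\pi}^{-1}(\fix(I))\subset\fix(\check{f})$ and the hypothesis on $\fix(\check{f})$ is right --- the set in question becomes the image of the compact set $\mathcal{A}\cap\sing(\mathcal{F})$ under the continuous map $\bigl(\check{\pi}|_{\check{U}}\bigr)^{-1}$, hence compact and of bounded diameter. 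Both arguments ultimately rest on the same fact, namely that the deck translates of $\check{U}$ are pairwise disjoint because $U$ is a disk, but yours is direct rather than by contradiction, gives the slightly stronger conclusion of compactness, and in fact only needs the inclusion $\check{\mathcal{A}}\subset\check{\pi}^{-1}(\mathcal{A})$, so it is insensitive to the precise meaning of ``lift'' of $\mathcal{A}$; your closing remarks that compactness of the components of $\fix(f)$ and nonemptiness of $\fix(\check{f})$ play no role here are also accurate.
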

\begin{proof}
Suppose by contradiction that it is unbounded, i.e. there exists a sequence $\sui{\check{x}}$ of singularities of $\check{\mathcal{F}}$ in
$\check{U}\cap \check{\mathcal{A}}$ such that the sequence $\left(p_1(\check{x}_n)\right)_{n\in\N}$ is unbounded. By compactness of
$\sing(\mathcal{F})\cap \mathcal{A}$ we can suppose that the sequence $\left(\check{\pi}(\check{x}_n)\right)_{n\in\N}$ converges to $x$ in
$\sing(\mathcal{F})\cap \mathcal{A}$. Since the open disk $U$ contains all singularities of $\mathcal{F}$ we have that $x$ is in $U$. Let $\check{x}$ be the lift of $x$ contained in $\check{U}$. Since $\check{U}$ is an open disk in $\Aa$, we can find a small bounded ball $D$ contained in $\check{U}$ and
containing $\check{x}$. Therefore there are a non-zero integer $j$ and a large enough integer $n$ such that $\check{x}_n+(j,0)$ is in $D\subset \check{U}$. This implies that $\check{x}_n$ is in $(\check{U}-(j,0))\cap \check{U}$ with $j\neq 0$. Since $\check{U}$ is arcwise-connected this contradicts the fact that $U=\check{\pi}(\check{U})$ is an open topological disk in the open annulus $\A$. This completes the proof of the lemma.
\end{proof}

\begin{prop}\label{proposition1theoremA}
 Let $\mathcal{A}$ be an essential compact annulus of $\A$ and let
  $\check{\mathcal{A}}$ be the lift of $\mathcal{A}$ to $\Aa$. Then there exists a constant $M_{\mathcal{A}}>0$ such that if $\check{z}$ and $\check{f}^n(\check{z})$ belong to $\check{\mathcal{A}}$ and
  $$ \abs{p_1(\check{f}^{n}(\check{z}))- p_1(\check{z})}\geq M_\mathcal{A}$$ then one of the following must hold:
  \begin{itemize}
    \item[(a)] there exist a leaf $\check{\phi}$ of $\check{\mathcal{F}}$ and three distinct integers $j_i$, $1\leq i \leq 3$, such that the transverse path $I_{\check{\mathcal{F}}}^n(\check{z})$ crosses each $\check{\phi}+(j_i,0)$, or
    \item[(b)] there exists $m\in \{1,\cdots, n-1\}$ such that $\check{f}^{m}(\check{z})$ does not lie in $\check{\mathcal{A}}$.
  \end{itemize}
\end{prop}
\begin{proof}

From the previous lemma, one can find an open neighborhood $V\subset U$ of $\sing(\mathcal{F})$ such that if $\check{V}$ is a lift of $V$ to $\Aa$, the diameter of $\check{V}\cap \check{\mathcal{A}}\cap \sing(\check{\mathcal{F}})$ is bounded and such that for every point
$\check{z}\in \check{\pi}^{-1}(V)$, the points $\check{z}$ and $\check{f}(\check{z})$ belong to the same connected component of $\check{\pi}^{-1}(U)$.
One knows that for every point $z\in \mathcal{A}\setminus V$, there exists a small open disk $O\subset
\dom(\mathcal{F})$ containing $z$ such that for every $z'\in O$ the path $I_\mathcal{F}^2(f^{-1}(z'))$ crosses the leaf $\phi_z$. By compactness of $\mathcal{A}\setminus V$, one can cover this set by a finite family $(O_{i})_{1\leq i \leq r}$ and so one can construct a partition $(X_{i})_{1\leq i \leq r}$ of $\mathcal{A}\setminus V$ such that for every $i\in \{1,\cdots ,r\}$ we have $X_{i} \subset O_{i}$. We have a unique partition $(\check{X}_{\alpha})_{\alpha \in A}$ of $\check{\mathcal{A}}$ such that, either $\check{X}_{\alpha}$ is contained in a connected component of $\check{\pi}^{-1}(U)$ and projects onto $V$, or there exists $i\in\{1,\cdots,r\}$ such that $\check{X}_{\alpha}$ is contained in a connected component of $\check{\pi}^{-1}(O_{i})$, and projects onto $X_{i}$. We
write $\alpha(\check{z})=\alpha$, if $\check{z}\in \check{X}_{\alpha}$. Let us define
  $$ M_\mathcal{A}^0=\max_{\check{z}\in \mathcal{\check{A}}} \abs{p_1(\check{f}(\check{z}))-p_1(\check{z})}\quad \text{ and } \quad M_\mathcal{A}^1=
  \max_{\alpha\in A} \diam(p_1(\check{X}_\alpha)).$$\\
Let assume Assertion (b) of the proposition does not hold, and let $\check{z}\in \check{\mathcal{A}}$, and $n\geq 1$ an integer such that $\check{z},\, \check{f}(\check{z}),\, \cdots, \, \check{f}^{n}(\check{z})$ are contained in $\check{\mathcal{A}}$ and define a sequence $n_0<n_1<\cdots<n_s$ in the following inductive way:
  $$ n_0=0,\; n_{l+1}=1+\sup\{k\in \{n_l,\cdots,n-1\}| \alpha(\check{f}^k(\check{z}))=\alpha(\check{f}^{n_l}(\check{z}))   \},\; n_s=n. $$
  We have the following facts:
\begin{itemize}
  \item[(i)] for every $l\in\{1,\cdots, s-1 \}$ at least one the sets $\check{X}_{\alpha(\check{f}^{n_l}(\check{z}))}$ and
  $\check{X}_{\alpha(\check{f}^{n_{l+1}}(\check{z}))}$ does not project on $V\cap \mathcal{A}$;
  \item[(ii)] we have that $$\abs{p_1(\check{f}^{n_l}(\check{z}))-p_1(\check{f}^{n_{l+1}}(\check{z}))}\leq M^0_{\mathcal{A}}+M^1_{\mathcal{A}}.$$
\end{itemize}

 If $$\abs{p_1(\check{f}^{n}(\check{z}))- p_1(\check{z})}\geq (6r+1)(M_\mathcal{A}^0+M_\mathcal{A}^1)$$
then by above Property (ii), $s\geq 6r+1$ and so, by above Property (i), there exist at least $3r$ sets
$\check{X}_{\alpha(\check{f}^{n_{l_k}}(\check{z}))}$ that do not project on $\mathcal{A}\cap V$. This implies that Assertion (a) holds, completing the proof of the proposition.
\end{proof}

Let $\mathcal{A}_0$ be an essential compact annulus containing the compact set $K_0$ provided by Hypothesis $(H_1)$. Since from our suppositions, the connected components of the set of fixed points of $f$ are all compact, we deduce that the union of $\mathcal{A}_0$ with all connected components of the fixed point set intersecting $\mathcal{A}_0$ is also compact.  Hence we can consider two essential simple loops $\gamma_N$ and $\gamma_S$ in $\A$ which do not contain fixed points of $f$, such that the essential compact annulus $\mathcal{A}'$ limited by the paths $\gamma_N$ and $\gamma_S$ contains $A_0$. Let $B=f^{-1}(\mathcal{A}')\cup \mathcal{A}'\cup f(\mathcal{A}')$, and note that any fixed point for $f$ that lies in $B$ must also lie in $\mathcal{A}'$, and since $\gamma_N$ and $\gamma_S$ are fixed point free, it must lie in the interior of $\mathcal{A}'$.

 In the following, we assume that for each point $\check z$ in $\check{\A}$, each integer $j\ge 2$ and each leaf $\check{\phi}$ of $\check{\mathcal{F}}$, the trajectory $I_{\check{\mathcal{F}}}^{\Z}(\check z)$ does not cross both $\check{\phi}$ and $\check{\phi}+(j,0)$, otherwise Theorem A follows from Corollary \ref{corolemmaintersectiontransverse11}. We deduce the following proposition.

\begin{prop}
  If Theorem A does not hold, then there exist points $a$ and $b$ that are not fixed by the lift $\check{f}$, a sequence $\suii{\check{w}}{l}$, two sequences of integers $\suii{p}{l}$ and $\suii{q}{l}$ such that $$\lim_{l\to +\infty} p_l=\lim_{l\to +\infty} q_l=+\infty, \quad \lim_{l\to +\infty} \check{w}_l=a,\, \quad \lim_{l\to +\infty} (\check{f}^{q_l}(\check{w}_l)-(p_l,0))=b.$$
\end{prop}
\begin{proof}
 We note that from previous proposition, we can suppose that for every large integer $l$, some element of the set $\{\check{z}_l,\cdots,\check{f}^{n_l}(\check{z}_l)\}$ does not lie in $\check{\mathcal{A}'}$. As $\check{z}_l$ and $\check{f}^{n_l}(\check{z}_l)$ belongs to $\check{\mathcal{A}'}$ there exist an integer $i_l$ such that for every $i\in\{0,\cdots, i_l-1\}$,  $\check{f}^{i}(\check{z}_l)$ belongs to $\check{\mathcal{A}'}$ but $\check{f}^{i_l}(\check{z}_l)$ does not belong to $\check{\mathcal{A}'}$ and an integer $j_l$ such that for every $j\in\{j_l+1,\cdots, n_l\}$,  $\check{f}^{j}(\check{z}_l)$ belongs to $\check{\mathcal{A}'}$ but $\check{f}^{j_l}(\check{z}_l)$ does not belong to $\check{\mathcal{A}'}$. Let $\check{w}_l=\check{f}^{i_l}(\check{z}_l)$ and $q_l=j_l-i_l$, so $\check{f}^{q_l}(\check{w}_l)=\check{f}^{j_l}(\check{z}_l)$. Since both the image and pre-image of $\mathcal{A}'$ lie in $B$, we deduce that $\check{w}_l$ and $\check{f}^{q_l}(\check{w}_l)$ lie in $B\setminus\inte{\mathcal{A}'}$. Moreover if $\mathcal{A}$ is a compact annulus containing $B$, from previous proposition we have that

 $$ \abs{p_1(\check{z}_l)- p_1(\check{w}_l)}\leq M_{\mathcal{A}} \text{ and }
 \abs{p_1(\check{f}^{n_l}(\check{z}_l))- p_1(\check{f}^{q_l}(\check{w}_l))}\leq M_{\mathcal{A}}, $$
 and so
 $$ p_1(\check{f}^{q_l}(\check{w}_l))-p_1(\check{w}_l)\geq M_l -2M_{\mathcal{A}}.$$
  The proposition follows from the compactness of $B\setminus\inte{\mathcal{A}'}$ which has no fixed point of $f$.
\end{proof}

We denote by $\phi_a$ the leaf of $\check{\mathcal{F}}$ that contains $a$, and by $D'$ the set of all point $w$ whose whole transverse trajectory crosses the leaf $\phi_a$.
\begin{leem}
  The set $D'$ is an $\check{f}$-invariant connected open set which is horizontally unbounded, that is $\diam(p_1(D'))=+\infty$.
\end{leem}
\begin{proof}
  Note that the set $D'$ is $\check{f}$-invariant (by definition) and open by Lemma \ref{lemma10LCT}. Therefore for every large integer $l$ the point $\check{f}^{q_l}(\check{w}_l)$ belongs to $D'$, and so $D'$ is horizontally unbounded. It remains to prove that $D'$ is connected. Indeed, let $\widetilde{\pi}:\widetilde{\dom(\check{\mathcal{F}})}\to \dom(\check{\mathcal{F}})$ the universal covering map of $\dom(\check{\mathcal{F}})$ and let $\widetilde{\phi_a}$ be a lift of $\phi_a$ to the universal covering to $\dom(\check{\mathcal{F}})$. We consider $\widetilde{D'}$ the set of all point $\widetilde{w}$ whose whole transverse trajectory crosses the leaf $\widetilde{\phi_a}$. Recall that $\widetilde{\phi_a}$ is a Brouwer line, and if $O$ is the connected component of the complement of $\widetilde{\phi_a}\cup  \widetilde{f}(\widetilde{\phi_a})$ whose closure contains $\widetilde{\phi_a}\cup  \widetilde{f}(\widetilde{\phi_a})$ then $\widetilde{D'}=\cup_{n\in \Z} \widetilde{f}^n(\overline{O})$. It follows that $\widetilde{D'}$ is a connected set, and as $D'=\widetilde{\pi}(\widetilde{D'})$ we conclude that $D'$ is a connected set.
\end{proof}

Note that if $D'$ intersects $D'+(j,0)$, where $j$ is an integer with $\abs{j}\geq 2$ then there exists a point $w$ whose whole $\check{\mathcal{F}}$-transverse trajectory crosses both leaves $\phi_a$ and $\phi_a+(j,0)$. It follows from Corollary \ref{corolemmaintersectiontransverse11} that Case (1) of Theorem A holds (see Remark \ref{remark2}) and we are done. Suppose now that $D'$ and $D'+(j,0)$ are disjoint for all integer $j$ with $\abs{j}\geq 2$. We claim that this cannot be possible. Indeed, let $D$ be the union of $D'$ and all bounded connected components of the complement of $D'$. It follows, using the previous lemma, that $D$ is an $\check{f}$-invariant horizontally unbounded open disk such that $D$ and $D+(j,0)$ are disjoint if $\abs{j}\ge 2$. Moreover as $\check f$ commute with each integer translation we have that $D+(j,0)$ is also an $\check{f}$-invariant open disk. As $b$ is not fixed by $\check{f}$, we can consider a closed topological disk $B$ containing $b$ such that $\check{f}(B)\cap B= \emptyset$. As the sequence $(\check{f}^{q_l}(\check{w}_l)-(p_l,0))_{l\in \N}$ converge to $b$, we have that for all sufficiently large $l$, $\check{f}^{q_l}(\check{w}_l)-(p_l,0)$ belongs to both $D-(p_l,0)$ and $B$. As a consequence, $B$ intersects $D-(p_l,0)$ for at least three different $p_l$ (see Figure \ref{fig:setDproofthereomA}). Therefore $B$ intersects three pairwise disjoints $\check{f}$-invariant topological open disk, so by Theorem \ref{a triple boundary lemma} $\check{f}$ must have wandering points. This contradicts Hypothesis $(H_2)$ completing the proof of Theorem A.

\begin{figure}[h!]
  \centering
    \includegraphics{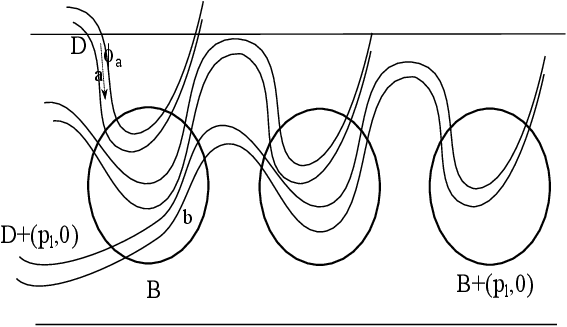}
  \caption{Set $D$ and its translations.}
  \label{fig:setDproofthereomA}
\end{figure}

\section{Examples}
Let us call a homeomorphism $\fonc{f}{\A}{\A}$ {\it irrotational} if its rotation set is reduced to an integer number. In this case, $f$ has a lift $\check{f}$ to $\Aa$ satisfying $\rot(\check{f})=\{0\}$, which we call the {\it irrotational lift of $f$}.

From Theorem A we know that, given an area-preserving irrotational homeomorphism $f$ of the open annulus $\A$ such that the connected components of the set of fixed point of $f$ are all compact and such that the set of fixed points of the irrotational lift, $\check{f}$, of $f$ to $\check{\A}$ projects into an open topological disk of $\A$, then for every compact set $K$ of $\A$ there exists a real constant $M>0$ such that for every $\check{z}\in \Aa$ and every integer $n\geq 1$ such that $\check{z}$ and $\check{f}^n(\check{z})$ belong to $\check{\pi}^{-1}(K)$ one has $$ \abs{p_1(\check{f}^n(\check{z}))-p_1(\check{z})}\leq M.$$

We will describe (in the following proposition) an example to show that in general one may not expect this bound to be independent of the compact set.

\begin{prop}\label{propexamfunb}
  There exists an irrotational homeomorphism $f_{bound}$ of $\A$ which preserves a Borel probability measure of full support satisfying:
  \begin{itemize}
    \item[(1)] the set of fixed points of the irrotational lift $\check{f}_{bound}$ of $f_{bound}$ projects into an open topological disk of $\A$;
    \item[(2)] the connected components of the set of fixed points of $f_{bound}$ are all compact;
    \item[(3)] for every real number $\eta>0$ and every integer $j\geq 1$ there exist a point $\check{z}\in\Aa$ and an integer $n\geq 1$ such that
    $$  \check{z}\in [0,\eta]\times [j,j+1] \quad \text{ and } \quad  \check{f}_{bound}^n(\check{z})\in [j,j+\eta]\times [j,j+1].$$
    \item[(4)] the homeomorphism $f_{bound}$ of $\A$ extends continuously to the semi-closed annulus $\A_{+\infty}:= \T{1}\times (-\infty,+\infty]$ as the identity on the circle $\T{1}\times \{+\infty\}$.
  \end{itemize}
\end{prop}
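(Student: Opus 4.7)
The plan is to construct $f_{bound}$ explicitly as a composition of area-preserving homeomorphisms localized in a sequence of disjoint horizontal bands. For each integer $j \geq 1$, I fix a thin closed annular band $B_j = \T{1} \times [j - \delta_j, j + \delta_j]$, where $\delta_j > 0$ is chosen so that the $B_j$ are pairwise disjoint and $\delta_j \to 0$. On each $B_j$ I build an area-preserving homeomorphism $g_j$ supported in $B_j$, with unique interior fixed point at $(0, j)$, and with an orbit of its lift that moves from $[0,\eta] \times [j - \delta_j, j + \delta_j]$ to $[j, j + \eta] \times [j - \delta_j, j + \delta_j]$ in finitely many steps, which will realize property~(3). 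The map $f_{bound}$ is then defined by piecing together the $g_j$, each extended by a small modification off of $B_j$.

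The key construction of each $g_j$ uses ``pendulum with alternating rotation direction'' dynamics. Concretely, $g_j$ is the time-$1$ map of an area-preserving Hamiltonian flow on $B_j$ with Hamiltonian $H_j$ that vanishes to infinite order on $\partial B_j$, has a unique critical point at $(0, j)$, and whose level sets near the boundary of the elliptic island around $(0, j)$ are long aperiodic orbits that oscillate between the upper half-band $\T{1} \times [j, j + \delta_j]$ (where the induced horizontal drift is $\sim +1/n_j$) and the lower half-band $\T{1} \times [j - \delta_j, j]$ (where the drift is $\sim -1/n_j$), forced to switch by a small vertical component. Taking $n_j$ of order $j$ (or larger) allows an orbit to accumulate $\sim j$ windings in one direction in the upper half-band before vertical drift carries it into the lower half-band where the windings are undone, so the long-term rotation number of $\check{g}_j$ is zero while the finite-time lifted displacement can reach $j$.

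Properties (2), (3), (4) follow directly from the construction: the isolated interior fixed points $(0, j)$ are obviously compact connected components, the orbit required by~(3) is built in, and the combination $\delta_j \to 0$ with the $j$-dependence of the Hamiltonian parameters ensures that the time-$1$ map tends to the identity as $y \to +\infty$, giving the continuous extension to $\T{1} \times \{+\infty\}$ in~(4). An invariant Borel probability measure of full support can be taken as a weighted sum $\mu = \sum_j (c_j \mu_{B_j} + c_j' \mu_{G_j})$, where $\mu_{B_j}$ and $\mu_{G_j}$ are normalized Lebesgue measures on $B_j$ and on the gap $G_j$ between $B_j$ and $B_{j+1}$, with positive weights summing to $1$; each summand is $f_{bound}$-invariant because the map is area-preserving on $B_j$ and will be arranged to preserve Lebesgue on each $G_j$ as well.

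The main technical obstacle I expect is the verification of property~(1): the fixed points of the irrotational lift $\check{f}_{bound}$ must project into a single open topological disk. With the naive construction, $f_{bound}$ is the identity on each gap $G_j$, so entire lifts of $G_j$ are fixed by $\check{f}_{bound}$ and project onto essential horizontal sub-annuli of $\A$, which cannot be contained in a disk. To resolve this I would modify the identity on $\A \setminus \bigcup_j B_j$ by composing with the time-$1$ map of a small auxiliary Hamiltonian $H_0(x, y) = \epsilon\, \rho(x) \chi(y)$ whose critical set lies on a vertical line $\{x_0\} \times \R$ with $x_0 \in \T{1}\setminus \{0\}$, chosen so that the induced flow has no fixed points off of this vertical line; combined appropriately with the $g_j$ inside each $B_j$, this ensures all fixed points of $\check{f}_{bound}$ project into the discrete set $\{(0, j) : j \geq 1\}$, which is contained in the open topological disk $U = (\T{1} \setminus \{x_0\}) \times \R$. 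The delicate point is to perform this global merging without creating spurious fixed points inside the bands or destroying the irrotational character, which is handled by taking $\epsilon$ small enough relative to the pendulum dynamics of each $g_j$ and invoking an implicit function argument near each $(0, j)$.
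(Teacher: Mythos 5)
Your core mechanism -- blocks accumulating near $+\infty$, each carrying an area-preserving map whose invariant curves are contractible but horizontally stretched over $\sim j$ fundamental domains, so that finite-time lifted displacement is large while every rotation number is $0$ -- is essentially the paper's mechanism (there it is produced by conjugating a flow with an invariant disk by the shear $\check{H}(\check{x},y)=(\check{x}+(j+1)\sin(2\pi y),y)$ inside each annulus $\T{1}\times[n,n+1]$). The genuine gap is at the seams, which is precisely where the paper's Proposition \ref{propconstructionexample} does its real work. In your construction the maps $g_j$ are time-one maps of Hamiltonians vanishing to infinite order on $\partial B_j$, so each boundary circle $\partial B_j$ is pointwise fixed and each gap $G_j$ is pointwise fixed; these are essential circles and essential annuli of fixed points, incompatible with properties (1) and (2). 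Your proposed repair -- composing with the time-one map of a small global Hamiltonian $H_0=\epsilon\,\rho(x)\chi(y)$ and invoking ``$\epsilon$ small'' plus an implicit function argument near each $(0,j)$ -- does not resolve this. Smallness of $\epsilon$ is useless in the collars near $\partial B_j$, where the displacement of $g_j$ is itself arbitrarily small: there the two displacements are of comparable size and nothing prevents them from cancelling, so new fixed points of the composition in those collars (and along the zero set of the auxiliary vector field, e.g.\ where $\rho$, $\rho'$, $\chi$ or $\chi'$ vanish) are not excluded; the implicit function theorem only gives persistence of the fixed point at $(0,j)$, not absence of fixed points elsewhere. Consequently neither (1) nor (2) is established, and your claim that the fixed set of the lift projects into $\{(0,j):j\geq 1\}$ is unjustified.

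The same repair also breaks your measure construction: the auxiliary flow has a vertical component $-\epsilon\rho'(x)\chi(y)$, so after composition the bands $B_j$ and gaps $G_j$ are no longer invariant, and the measure $\sum_j(c_j\mu_{B_j}+c_j'\mu_{G_j})$ is no longer $f_{bound}$-invariant; if instead you cut $\chi$ off so that the pieces stay invariant, you are back to pointwise-fixed circles at the interfaces. The paper avoids this circle of problems by never introducing identity regions or a posteriori compositions: it prescribes on each gluing circle $\T{1}\times\{n\}$ a circle diffeomorphism $g_n$ fixing exactly two points, and uses a quantitative area-preserving extension lemma (the BCW-type lemma behind Corollary \ref{coroconstfinD}) to interpolate inside sub-disks, so that each unit annulus is invariant (giving the full-support invariant probability measure as a weighted sum over invariant blocks), the fixed set meets each gluing circle in two points, and conditions (b-1)--(b-2) guaranteeing (1) and (2) hold by construction. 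To complete your proof you would need an argument of this type controlling the fixed set across the interfaces and restoring an invariant probability measure, and also the (minor, unchecked) adjustment that the travelling orbit in the $j$-th block starts and ends with second coordinate in $[j,j+1]$.
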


The example from previous proposition will allow us to describe another example showing that the hypothesis
``the connected components of the set of fixed point of $f$ are all compact'' is essential in the conclusion of Theorem A. In the following example none of the Cases of Theorem A holds.

\begin{prop}\label{propexamfnone}
  There is an irrotational homeomorphism $f_{none}$ of $\A$ which preserves a Borel probability measure of full support satisfying:
  \begin{itemize}
    \item[(1)] the set of fixed points of the irrotational lift $\check{f}_{none}$ of $f_{none}$ projects into an open topological disk of $\A$;
    \item[(2)] there exists a compact set $K_0$ of $\A$ such that for every real number $M>0$, there exist a point $\check{z}\in\Aa$ and an integer $n\geq 1$ such that $\check{z}$ and $\check{f}^n(\check{z})$ belong to $\check{\pi}^{-1}(K_0)$, and
        $$ \abs{p_1(\check{f}_{none}^n(\check{z}))-p_1(\check{z})}\geq M.$$
  \end{itemize}
\end{prop}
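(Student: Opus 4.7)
The plan is to build $f_{none}$ by a topological modification of $f_{bound}$ from Proposition~\ref{propexamfunb} that uses property~(4) (continuous extension to the identity on $\T{1}\times\{+\infty\}$) to collapse the ``tower of unbounded deviation'' supplied by property~(3) — which sits at heights $y\approx j\to+\infty$ in $\A$ — onto a bounded region of a new open annulus $\A'$, at the price of introducing a non-compact connected component in $\fix(f_{none})$.

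Concretely, I would glue two copies of the semi-closed annulus $(\A_{+\infty},f_{bound})$ along their common fixed boundary circle $\T{1}\times\{+\infty\}$. By property~(4) both extensions are the identity there, so the resulting space $\A'$ is an open annulus (topologically the double of $\A_{+\infty}$, i.e.\ $S^2$ minus two points, one from each $-\infty$ end) and the combined map $\tilde f$ is a well-defined homeomorphism. The naive gluing produces an essential fixed circle, which would violate condition~(1); to resolve this I would perform a surgery in a small disk around one point $p_0$ of the identified circle, replacing it by a disk equipped with a dynamics that turns the identified circle into a properly embedded open fixed arc instead of a fixed essential circle. After this adjustment one obtains a homeomorphism $f_{none}$ of $\A'$ whose fixed set contains a non-compact proper (non-essential) arc but no essential fixed circle.

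I would then check the required properties. The map $f_{none}$ preserves a Borel probability measure of full support, obtained by averaging the invariant measures of the two copies of $f_{bound}$; it is isotopic to the identity, since both copies are and the surgery is supported in a disk; and its rotation set is $\{0\}$, inherited from the irrotationality of each copy. For condition~(1), the fixed set of the irrotational lift $\check f_{none}$ in the universal cover of $\A'$ is the union of the lifts of the two fixed sets of $f_{bound}$ (which project into open disks of each copy by property~(1) of $f_{bound}$) together with one copy of the fixed arc coming from the surgery; since the arc is properly embedded and non-essential in $\A'$, it has a strip-like neighborhood which is a disk in $\A'$, and enlarging the three pieces to a common open disk $V\subset\A'$ completes the verification. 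For condition~(2), I would choose $K_0\subset\A'$ to be a compact essential annular neighborhood of the surgery region; orbits witnessing property~(3) of $f_{bound}$ — starting in $[0,\eta]\times[j_l,j_l+1]$ and returning to $[j_l,j_l+\eta]\times[j_l,j_l+1]$ after $n_l$ iterations of $\check f_{bound}$ in $\Aa$, with horizontal displacement $j_l$ — correspond under the gluing to orbits of $f_{none}$ that migrate between the two copies of $\A_{+\infty}$ along the identified circle, entering and exiting $K_0$ while realizing horizontal displacement $j_l$ in the universal cover of $\A'$.

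The main obstacle I expect is the surgery step: breaking the essential fixed circle of the naive double into a non-essential proper fixed arc while preserving both the full-support invariant measure and the horizontal displacement of the glued $\tilde f$, and while ensuring that no new invariant essential curve is created (which would bring back rotation numbers or essential fixed sets into $\check f_{none}$). Making this topological adjustment precise — specifying the disk, the modified local dynamics, and verifying that the fundamental-group generator of $\A'$ pairs correctly with the horizontal direction of $\check f_{bound}$ so that displacements truly transfer — is the technical heart of the construction; the dynamical content of $f_{bound}$ is reused essentially unchanged, only reinterpreted through the new ambient topology.
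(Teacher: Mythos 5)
Your overall strategy (reuse $f_{bound}$, exploit its identity extension to $\T{1}\times\{+\infty\}$ to change the topology at the upper end so that the unbounded-deviation witnesses of Proposition \ref{propexamfunb}(3) fall into one fixed compact set) is the right one, but the step you yourself flag as the ``technical heart'' --- the surgery --- is not just unfinished, it is problematic as described, and this is a genuine gap. First, a modification supported in a small disk $D$ around one point of the glued circle cannot turn that essential fixed circle into a ``properly embedded open fixed arc'': outside $D$ the circle remains pointwise fixed, so what survives is a \emph{compact} arc, and you would then have to verify from scratch that no new fixed or periodic structure is created inside $D$, that the full-support invariant \emph{probability} measure is preserved by the composed map, and that the rotation set stays $\{0\}$; none of this is argued. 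Second, and more seriously, the surgery region unavoidably meets exactly the orbits you need for property (2): in the construction of $f_{bound}$ each band $\T{1}\times[j,j+1]$ is invariant and the witness orbit of Proposition \ref{propexamfunb}(3) winds $j$ times around inside that band with per-iterate displacement tending to $0$ as $j\to+\infty$; hence for every large $j$ these orbits pass through any prescribed neighborhood of any point of the circle at infinity, in particular through $D$. So after the surgery the very orbits that were to realize the large deviations between visits to $K_0$ are altered, and nothing in your argument recovers them.

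The paper avoids all of this by not doubling and not touching the dynamics at all: it takes a \emph{single} copy of $\A_{+\infty}$ and folds an arc of the circle at infinity onto itself, identifying $(x,+\infty)$ with $(1-x,+\infty)$ for $x\in[0,1/4]$. The quotient is again homeomorphic to $\A_{+\infty}$, and since $f_{bound}$ is the identity on $\T{1}\times\{+\infty\}$ it descends; conjugating by a homeomorphism with the standard semi-closed annulus and restricting to $\A$ gives $f_{none}$. The former circle at infinity becomes a fixed arc with exactly one endpoint on the new boundary circle, i.e.\ a non-compact fixed component of $f_{none}$ in $\A$ (this is precisely the hypothesis of Theorem A that the example is meant to violate), while property (1) persists because the quotient and the conjugacy act trivially near $\{1/2\}\times\R$. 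For property (2), the boxes $[0,\eta]\times[j,j+1]$ accumulate, in the quotient, on a compact sub-arc of the folded arc lying in the \emph{interior}, so a single compact disk $K_0$ contains all the witnesses for $j$ large; since the quotient map is injective on $\A$ and sends the generator of $\pi_1(\A)$ to the generator of the new annulus, the horizontal displacement $\approx j$ transfers to the lift of $f_{none}$. If you want to salvage your doubling idea you would have to design the surgery so that the broken fixed set stays inessential \emph{and} the deviation witnesses survive (or produce new ones), which is essentially as hard as the original problem; the paper's folding trick sidesteps it entirely.
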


To obtain the example from Proposition \ref{propexamfunb}, it suffices to prove the following proposition.

\begin{prop}\label{propconstructionexample}
Let $x_0$, $x'_0$ and $x_1$, $x'_1$ be four points in $\T{1}$ and let $\epsilon>0$ be a real number. Then there exists a real number $\delta>0$ such that, given a positive integer $j$ and two irrotational diffeomorphisms $\fonc{g_0,g_1}{\T{1}}{\T{1}}$ which are different from the identity, $\delta$-close to the identity in the $C^1$-topology and such that $x_i$ and $x'_i$ are fixed points of $g_i$, $i\in\{0,1\}$, there is an area-preserving irrotational homeomorphism $\fonc{f}{\T{1}\times [0,1]}{\T{1}\times [0,1]}$ which is isotopic to the identity and satisfies:
\begin{itemize}
  \item[(a)] $f$ coincides with $g_0$ (resp. $g_1$) on the boundary component $\T{1}\times \{0\}$ (resp. $\T{1}\times \{1\}$);
  \item[(b-1)] there exists a path that joins $\T{1}\times \{0\}$ to $\T{1}\times \{1\}$, it is contained in the interior of $\T{1}\times [0,1]$, but the endpoints, and it does not intersect the set of fixed points of $f$;
  \item[(b-2)] there exists a loop in the interior of $\T{1}\times [0,1]$ which is not homotopic to zero, and it does not intersect the set of fixed points of $f$;
  \item[(c)] For every real number $\eta>0$ there exist a point $\check{z}\in \R\times [0,1]$ and an integer $n \geq 1$ such that
      $$\check{z}\in [0,\eta]\times [0,1]\quad \text{ and } \quad \check{f}^{n}(\check{z})\in [j,j+\eta]\times [0,1] .$$
  \item[(d)] $f$ is $\epsilon$-close to the identity in the $C^0$-topology.
\end{itemize}
\end{prop}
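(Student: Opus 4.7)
The plan is to build $f$ by gluing three area-preserving pieces on horizontal strips: boundary interpolations $\phi_0,\phi_1$ on $\T{1}\times[0,1/4]$ and $\T{1}\times[3/4,1]$ matching $g_0,g_1$ on the outer boundaries, and a ``drift'' piece $f_{\text{drift}}$ on $\T{1}\times[1/4,3/4]$, with all three equal to the identity on the common boundaries $y=1/4,3/4$. I would first choose $\delta$ small compared to $\epsilon$ so that each piece is $\epsilon/3$-close to the identity in $C^0$.

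For $\phi_0$, write $g_0$ as the time-one map of $\xi_0(x)\partial_x$ with $\|\xi_0\|_{C^1}\leq C\delta$ and define $\phi_0$ as the time-one map of the Hamiltonian $H_0(x,y)=\alpha(y)\xi_0(x)+\beta(y)\sin(2\pi x)$, with cutoffs satisfying $\alpha(0)=\alpha(1/4)=0$, $\alpha'(0)=1$, $\alpha'(1/4)=0$, and $\beta,\beta'$ vanishing at both $y=0$ and $y=1/4$ (e.g.\ $\alpha(y)=y(1-4y)^2$, $\beta(y)=y^2(1-4y)^2$). Then $\phi_0$ is area-preserving, equals $g_0$ at $y=0$ and the identity on a neighborhood of $y=1/4$, and (for generic $\alpha,\beta$) has only isolated fixed points; the auxiliary $\beta$-term ensures these are not arranged on a vertical line even if $\xi_0$ has degenerate zeros. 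Define $\phi_1$ analogously on $[3/4,1]$.

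For $f_{\text{drift}}$, pick a small $h>0$, a large integer $N$ with $\sigma:=j/N<\epsilon/3$, and the helical arc whose lift is $\check\gamma(t)=(t,\,1/4+h+t(1/2-2h)/j)$ for $t\in[0,j]$, extended smoothly by a short distance $\delta_0>0$ past both endpoints. With radius $r<(1/2-2h)/(2j)$, the $r$-tube $T_\gamma$ is an embedded topological disk in $\A$. In tubular coordinates $(s,u)$ near $\gamma$, let $K(s,u)=\sigma\, u\,\psi(u/r)\,\chi(s)$ with bumps $\psi,\chi$ chosen so that $X_K$ is tangent to $\gamma$ with magnitude exactly $\sigma$ on $\gamma|_{[0,j]}$, tapering to zero at the two endpoints of the extension and on $\partial T_\gamma$. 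Extended by zero outside $T_\gamma$, $K$ generates an area-preserving time-one map $f_{\text{drift}}$ which is the identity outside $T_\gamma$ and satisfies $f_{\text{drift}}(\gamma(t))=\gamma(t+\sigma)$ for $t\in[0,j-\sigma]$.

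Verification: for (c), pick $\check z=\check\gamma(0)=(0,\,1/4+h)\in[0,\eta]\times[0,1]$; by the constant-speed structure on $\gamma|_{[0,j]}$, $\check f^N(\check z)=\check\gamma(N\sigma)=\check\gamma(j)=(j,\,3/4-h)\in[j,j+\eta]\times[0,1]$ for every $\eta>0$. Irrotationality holds because every orbit of $\check f$ is bounded in the $x$-direction: boundary-strip orbits are bounded by $1$ (as in any lift of an irrotational circle map), $T_\gamma$-orbits are heteroclinic between the endpoints of the extended arc and have horizontal range at most $j+2\delta_0$, and all other points are fixed. Conditions (a) and (d) are immediate from the construction; for (b-1) concatenate short vertical segments (at $x$-coordinates avoiding the finitely many fixed points of $\phi_0,\phi_1$) with the arc $\gamma$; for (b-2) use the essential horizontal circle $\T{1}\times\{y^*\}$ in the $\phi_0$-layer at a generic height $y^*\in(0,1/4)$, which avoids the isolated fixed points of $\phi_0$. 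The main obstacle is reconciling (c) with irrotationality: a finite-time horizontal displacement of $j$ must not translate into a nonzero asymptotic rotation. The resolution is to make the drift orbit heteroclinic between two fixed points (the endpoints of the extended arc), so its large finite-time displacement leaves the rotation number zero, while the speed $\sigma=j/N$ is tuned precisely so that at integer time $N$ the orbit lands exactly at $\gamma(j)$, ensuring condition (c) simultaneously for all $\eta>0$.
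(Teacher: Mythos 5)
Your tube mechanism for the drift is appealing and the verification of condition (c) for it is sound, but the horizontal-strip decomposition cannot produce a map satisfying (b-1). You glue the pieces along the circles $y=1/4$ and $y=3/4$ and require each piece to be the identity there. Since $\phi_0$ is the time-one map of $X_{H_0}$ and $\T{1}\times\{1/4\}$ is $X_{H_0}$-invariant, the restriction of $\phi_0$ there can be the identity only if $X_{H_0}$ vanishes identically on that circle, and indeed your cutoff conditions $\alpha(1/4)=\alpha'(1/4)=\beta(1/4)=\beta'(1/4)=0$ force this. Likewise $f_{\text{drift}}$ is the identity outside $T_\gamma$, which sits strictly inside $\T{1}\times(1/4,3/4)$, so the whole collar between $y=1/4$ and the tube is fixed pointwise. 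Hence $\fix(f)$ contains an essential circle separating $\T{1}\times\{0\}$ from $\T{1}\times\{1\}$, and no path as in (b-1) exists; this matters, because (b-1) is exactly what the proof of Proposition~\ref{propexamfunb} uses to get that $\fix(\check{f}_{bound})$ projects into a topological disk, a hypothesis of Theorem~A. A second gap is the irrotationality of $\phi_0,\phi_1$: the claim that ``boundary-strip orbits are bounded by $1$ (as in any lift of an irrotational circle map)'' treats $\phi_0$ as if it were a skew-product over a circle map, but it is a genuine two-dimensional Hamiltonian flow whose orbits lie on level sets of $H_0$, and nothing in the form $H_0=\alpha\xi_0+\beta\sin(2\pi\cdot)$ excludes regular essential level circles on which the flow rotates. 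The flux through a vertical segment is $H_0(x^*,1/4)-H_0(x^*,0)=0$, so the \emph{mean} rotation vanishes, but that does not force individual rotation numbers to vanish; you would get a vertical barrier $\{x^*\}\times[0,1/4]\subset\{H_0=0\}$ only at a point with $\xi_0(x^*)=\sin(2\pi x^*)=0$, which is unavailable for arbitrary $x_0,x'_0$.

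The paper avoids both issues at once by never gluing along an essential circle. It realizes the drift on the whole closed annulus by a topological flow with finitely many singularities and no essential closed ``connections'' (so orbits are bounded and the interior fixed set is finite), conjugated by the horizontal shear $H(\check x,y)=(\check x,y)+((j+1)\sin(2\pi y),0)$, which stretches a transitive invariant circle far enough to realize (c). The circle maps $g_0,g_1$ are then attached not on annular strips but on closed topological \emph{disks}, cut out by a pair of essential arcs, via Corollary~\ref{coroconstfinD}; lifts of a disk are pairwise disjoint and horizontally bounded, so irrotationality and non-separation of the fixed set in the boundary regions come for free. Your explicit tube could plausibly substitute for the shear conjugation, but it would need to be embedded in a disk-based decomposition of this kind rather than a strip decomposition.
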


This proposition will be proved at the end of this section. In what follows, we will prove Propositions \ref{propexamfunb} and \ref{propexamfnone} assuming Proposition \ref{propconstructionexample}.

\subsection{Proof of Propositions \ref{propexamfunb} and \ref{propexamfnone}}

\begin{proof}[Proof of Proposition \ref{propexamfunb}]
For every $n\in\Z$ let $z_n:=(0,n)$ and $z'_n:=(1/2,n)$ be points in $\A$. Let us fix a sequence of positive real numbers $\left(\epsilon_n\right)_{n\in \Z}$ which converges to $0$ as $n$ to $+\infty$ and consider the sequence of positive real numbers $\left(\delta_n\right)_{n\in \Z}$ such that $\delta_n<\epsilon_n$ given by Proposition \ref{propconstructionexample}. Let us consider a sequence $\left(g_n\right)_{n\in \Z}$ of diffeomorphisms of the circle $\T{1}$ as in Proposition \ref{propconstructionexample} ($g_n$ fixes only $0$ and $1/2$) which are $\delta_n$-close to the identity to the identity in the $C^1$-topology, and a sequence of positive integers $\left(j_n\right)_{n\in \Z}$ which goes to $+\infty$ as $n$ to $+\infty$. For every integer $n$ we can define an area-preserving irrotational homeomorphism $f_n$ on the closed annulus $A_n=\T{1}\times [n,n+1]\subset \A$ which satisfies the properties formulated in Proposition \ref{propconstructionexample} and that the restriction of $f_n$ to $\T{1}\times  \{n\}$ is $g_n$ and the restriction of $f_n$ to $\T{1}\times \{n+1\}$ is $g_{n+1}$. Let us define the homeomorphism $f_{bound}$ of the open annulus $\A$ which coincides with $f_n$ on $A_n\subset \A$. We note that this homeomorphism preserves a Borel probability measure of full support. Moreover Properties (1) and (2) in Proposition \ref{propexamfunb} follows of Properties (b-1) and (b-2) in Proposition \ref{propconstructionexample} respectively. Moreover, Properties (3) and (4) in Proposition \ref{propexamfunb} follows of Property (c) in Proposition \ref{propconstructionexample} and Property (d) in Proposition \ref{propconstructionexample} and the chosen of the sequence $\left(\epsilon_n\right)_{n\in \Z}$ respectively. This completes the proof of the proposition.
\end{proof}

\begin{proof}[Proof of Propositions \ref{propexamfnone}]
Let us define the following equivalence relation on the semi-closed annulus  $\A_{+\infty}:=\T{1}\times (-\infty,+\infty]$.
$$   (x,y)\sim (x',y') \text {  if and only if } \quad \begin{cases}
  x=x' \text{ and } y=y';\\
  y=y'=+\infty,\, x\in [-1/4,1/4] \text{ and } x'=1-x.
\end{cases}     $$
Let $\A'_{+\infty}:=\A_{+\infty}/\sim$ be the quotient space of $\A_{+\infty}$. For $(x,y)\in\A_{+\infty}$, we write $[(x,y)]$ the equivalence class of $(x,y)$. One has that $\A'_{+\infty}$ is homeomorphic to $\A_{+\infty}$ and let $\fonc{\varphi}{\A'_{+\infty}}{\A_{+\infty}}$ be such a homeomorphism which acts as the identity on a neighborhood of the set $\{[(1/2,y)]: y\in \R\cup \{+\infty\}\}$, and such that the image by $\varphi$ of $\{[x, \infty]:0\le x\le 1/4\}$ is the segment $\{(0,t): 0\le t\le +\infty\}$.  Now, as $f_{bound}$ is the identity on the circle $\T{1}\times \{+\infty\}$ (Property (4) in Proposition \ref{propexamfunb}), it induces a homeomorphism $f'_{none}$ on $\A'_{+\infty}$ which acts as the identity on the segment $L_{+\infty}:= \{[(x,+\infty)]:x\in [0,1/4]\}$. We denote by $f_{none}$ the restriction to $\A$ of the homeomorphism  $ \varphi \, f'_{none}\,\varphi^{-1}$  defined  on the semi-closed annulus $\A_{+\infty}$. We note that $f_{none}$ is isotopic to the identity of $\A$. Let $K_0$ be a closed topological disk in $\A_{+\infty}$ that does not intersect the boundary of the semi-closed annulus and that contains $\varphi([(0,+\infty)])\in \varphi(L_{+\infty})$ in its interior. We note that there exist an integer $n_0$ and a real number $\eta_0>0$ such that $$\varphi(\{[(x,y)]: (x,y)\in [n_0,+\infty]\times [-\eta_0, \eta_0]\})\subset K_0.$$
(see Figure \ref{fig:examplefnone}).

\begin{center}
\begin{figure}[h!]
  \centering
    \includegraphics{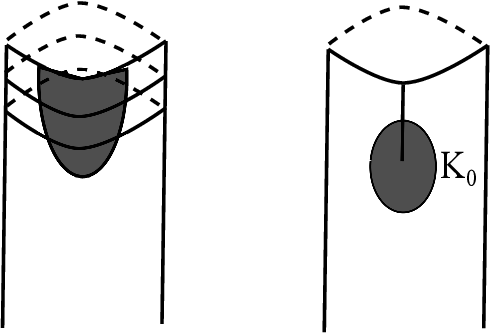}
  \caption{The set $K_0$.}
  \label{fig:examplefnone}
\end{figure}
\end{center}

Hence Property (2) in Proposition \ref{propexamfnone} follows of Property (3) in Proposition \ref{propexamfunb}. Moreover as $\varphi$ acts as the identity on a neighborhood of $\{1/2\}\times \R$ Property (1) in Proposition \ref{propexamfnone} follows of Property (1) in Proposition \ref{propexamfunb}. This completes the proof of Proposition \ref{propexamfnone}.
\end{proof}

\subsection{Proof of Proposition \ref{propconstructionexample}}

We use the following lemma (see \cite{kota}). It can be obtained by a straightforward adaptation of the proof of Proposition 2.2 from \cite{BCW}.

\begin{leem}
  For every real number $\epsilon>0$ there exists a real number $\delta>0$ such that, if $\fonc{g}{\T{1}}{\T{1}}$ is a diffeomorphism of the circle which is $\delta$-close to the identity in the $C^1$-topology, then there exists an area-preserving diffeomorphism  $\fonc{f}{\T{1}\times [0,1]}{\T{1}\times [0,1]}$ of the closed annulus which is $\epsilon$-close to the identity in the $C^1$-topology satisfying $f(x,1)=(g(x),1)$ and $f(x,0)=(x,0)$ for every $x\in \T{1}$.
\end{leem}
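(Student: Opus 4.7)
My plan is to follow the scheme of Proposition 2.2 of \cite{BCW}, constructing $f$ as a stack of many thin area-preserving slabs. Since $g$ is $C^1$-close to the identity, it sits at the endpoint of a canonical isotopy $(g_t)_{t \in [0,1]}$ from $\text{id}$ to $g$ through circle diffeomorphisms uniformly $C^1$-close to the identity. Fixing a large integer $N$ (chosen in terms of $\epsilon$), write $g = h_N \circ \cdots \circ h_1$ with $h_k := g_{k/N} \circ g_{(k-1)/N}^{-1}$, each a circle diffeomorphism of $C^1$-size $O(\delta/N)$.

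The heart of the construction is to build, for each $k$, an area-preserving diffeomorphism $F_k$ of the slab $S_k := \T{1}\times[(k-1)/N, k/N]$ which is the identity on the lower boundary and equals $h_k$ on the upper one. Writing $h_k(x) = x + \eta_k(x)$ with $\|\eta_k\|_{C^1} = O(\delta/N)$, start from the naive vertical interpolation
\[
  \widetilde F_k(x,y) = \bigl(x + N(y-(k-1)/N)\,\eta_k(x),\ y\bigr),
\]
whose Jacobian differs from $1$ by $O(\delta/N)$. Apply Moser's trick to find an area-preserving correction $\Psi_k$ on $S_k$, $C^1$-close to the identity and equal to the identity on both boundary circles, so that $F_k := \Psi_k \circ \widetilde F_k$ is area-preserving; the mass-compatibility condition is automatic since $\widetilde F_k(S_k)=S_k$. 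After a natural $y$-rescaling, the resulting slab maps fit end-to-end to produce a global area-preserving diffeomorphism $f$ of $\T{1}\times[0,1]$ equal to the identity at $y=0$ and to $g$ at $y=1$.

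For the $C^1$-estimate, $\widetilde F_k$ itself has $C^1$-distance $O(\delta)$ from the identity (the $y$-scaling by $N$ is absorbed by the $C^1$-smallness $O(\delta/N)$ of $\eta_k$), and if the Moser correction $\Psi_k$ can be made of the same order, choosing $\delta$ small relative to $\epsilon$ closes the argument. The main obstacle is precisely the Moser correction step: the Jacobian of $\widetilde F_k$ does not equal $1$ on the top boundary of $S_k$ (it equals $1 + \eta_k'(x)$ there), so forcing $\Psi_k$ to be the identity on the top while still solving the prescribed-Jacobian equation requires either preconditioning $\widetilde F_k$ so its Jacobian is $1$ on both boundaries, or splitting the correction into a boundary-adjusting piece and an interior Moser step. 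Producing a $C^1$ bound on $\Psi_k$ uniform in $N$ is the crucial quantitative ingredient, and is where the detailed argument of \cite{BCW} does its real work.
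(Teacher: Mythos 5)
You should note at the outset that the paper itself offers no argument for this lemma: it is quoted from \cite{kota} with the remark that it follows by adapting Proposition 2.2 of \cite{BCW}, so your sketch has to stand on its own as a proof, and as written it does not. The first genuine gap is structural: you normalize each slab map $F_k$ of $S_k=\T{1}\times[(k-1)/N,k/N]$ to be the identity on the lower boundary circle and $h_k$ on the upper one, but then at the interface $y=k/N$ the map coming from $S_k$ is $h_k$ while the map coming from $S_{k+1}$ is the identity, so the slab maps do not ``fit end-to-end'' into a continuous $f$ (no $y$-rescaling changes this). To glue, the slab over $[(k-1)/N,k/N]$ would have to equal $g_{(k-1)/N}$ on its bottom and $g_{k/N}$ on its top, and you cannot reduce that to your normalized problem by composing with $(x,y)\mapsto(g_{(k-1)/N}(x),y)$, nor extend $F_k$ above its slab by $(x,y)\mapsto(h_k(x),y)$, because a circle diffeomorphism close to the identity need not preserve length, so these maps are not area-preserving; this is exactly the point where the adaptation from the conservative setting of \cite{BCW} has content. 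Moreover the unnormalized slab problem is no easier than the original one, since $g_{(k-1)/N}$ and $g_{k/N}$ are only $O(\delta)$-close to the identity (not $O(\delta/N)$), so the decomposition $g=h_N\circ\cdots\circ h_1$ buys nothing as it stands; note that the global interpolation $(x,y)\mapsto(x+\beta(y)\eta(x),y)$ is already $C^1$-$O(\delta)$-close to the identity, so the slabs were never needed for that estimate.

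The second gap you acknowledge yourself, but it is the entire analytic content of the lemma: the boundary-respecting correction making the interpolation area-preserving, with a $C^1$ bound tending to $0$ with $\delta$. This is not a routine Moser step. A Dacorogna--Moser solution of $\det D\Psi=J$ with $\Psi=\mathrm{id}$ on the boundary comes with estimates in terms of a H\"older (or at least modulus-of-continuity) norm of $J-1=\beta\,\eta'$, and $C^1$-closeness of $g$ to the identity gives no control whatsoever on the modulus of continuity of $\eta'$; indeed $J$ is merely continuous when $g$ is only $C^1$, so even solvability in the class you need requires an extra approximation argument, and the uniform $C^1$ estimate requires a genuinely quantitative construction. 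Deferring this to ``where the detailed argument of \cite{BCW} does its real work'' leaves the $\epsilon$--$\delta$ statement, which is the whole point of the lemma, unproved. So the proposal is a reasonable plan in spirit, but it neither produces a well-defined area-preserving map nor establishes the required estimate.
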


We deduce the following corollary.

\begin{coro}\label{coroconstfinD}
  For every real number $\epsilon>0$ there exists a real number $\delta>0$ such that, if $\fonc{g}{\s^{1}}{\s^{1}}$ is a diffeomorphism of the unit circle of the plane  which is $\delta$-close to the identity in the $C^1$-topology, then there exists an area-preserving homeomorphism $\fonc{f}{\overline{\D}}{\overline{\D}}$ of the closed unit disk which is $\epsilon$-close to the identity in the $C^1$-topology satisfying $f|_{ \s^{1}}=g$. Moreover if $z_0$ and $z_1$ are not fixed points of $g$, then there exists a path $\alpha$ in the interior of $\overline{\D}$, but the endpoints, that does not contain fixed points and that joins $z_0$ to $z_1$.
\end{coro}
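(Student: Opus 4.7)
\textbf{Proof plan for Corollary \ref{coroconstfinD}.} The plan is to deduce the corollary from the preceding lemma by identifying the closed unit disk with the quotient of the closed annulus $\T{1}\times[0,1]$ obtained by collapsing one boundary circle to a point, in such a way that the area forms match up to a constant.

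Given $\epsilon>0$, I would first pick an auxiliary $\epsilon'>0$ (depending on $\epsilon$ and on the modulus of continuity of the collapse map $\Phi$ introduced below), and set $\delta$ to be the constant provided by the preceding lemma for $\epsilon'$. Given a diffeomorphism $g$ of $\s^{1}$ that is $\delta$-close to the identity in the $C^1$ topology, apply the lemma to obtain an area-preserving diffeomorphism $\tilde{f}\colon\T{1}\times[0,1]\to\T{1}\times[0,1]$ which is $\epsilon'$-close to the identity in $C^1$ and satisfies $\tilde{f}(x,0)=(x,0)$ and $\tilde{f}(x,1)=(g(x),1)$. Then define
$$\Phi\colon\T{1}\times[0,1]\longrightarrow\overline{\D},\qquad \Phi(x,t)=\sqrt{t}\,e^{2\pi i x}.$$
The map $\Phi$ restricts to a homeomorphism from $\T{1}\times(0,1]$ onto $\overline{\D}\setminus\{0\}$ and crushes $\T{1}\times\{0\}$ to the origin. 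A direct change-of-variables computation shows that $\Phi$ pulls back Lebesgue measure on $\overline{\D}$ to a constant multiple of $dx\,dt$, so $\Phi$ is measure-preserving up to a scaling factor $\pi$.

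The next step is to set $f(z):=\Phi\circ\tilde{f}\circ\Phi^{-1}(z)$ for $z\in\overline{\D}\setminus\{0\}$ and $f(0):=0$. The equality $\tilde{f}(\,\cdot\,,0)=\mathrm{id}$ guarantees that $f$ extends continuously through the collapsed point; since $\tilde{f}$ is a homeomorphism of the annulus fixing the collapsed circle pointwise, the induced $f$ is a homeomorphism of $\overline{\D}$. Area-preservation of $f$ follows from that of $\tilde{f}$ and the conjugacy formula, because $\Phi$ scales area by a constant. The identity $f|_{\s^{1}}=g$ is immediate from $\tilde{f}(x,1)=(g(x),1)$. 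For closeness to the identity, the uniform continuity of $\Phi$ and of $\Phi^{-1}$ away from $0$, combined with continuity of $f$ at $0$, imply that for $\epsilon'$ small enough $f$ is $\epsilon$-close to the identity in $C^0$; the $C^1$ closeness claim in the statement is delicate at the origin because $\Phi$ is not smooth there, and I would handle it either by arranging in the lemma that $\tilde{f}$ coincides with the identity on an entire collar neighborhood of $\T{1}\times\{0\}$ (so that $f$ is the identity near $0$), or by reading ``$C^1$-close'' as ``$C^0$-close'', which is what is actually used in Proposition \ref{propconstructionexample}.

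For the path property, assume $z_0,z_1\in\s^{1}$ are not fixed by $g$; identify them with points $(z_0,1),(z_1,1)\in\T{1}\times\{1\}$, which are not fixed by $\tilde{f}$. In the annulus $\T{1}\times[0,1]$ the set $\fix(\tilde{f})$ contains the entire collapsed boundary $\T{1}\times\{0\}$ (which projects to the single point $0\in\overline{\D}$) and, in the interior, only a compact set which can be made arbitrarily small by taking $\delta$ small, using the Hamiltonian interpolation underlying the construction from \cite{BCW}. Consequently one can choose a path $\tilde{\gamma}$ in $\T{1}\times(0,1)$ joining $(z_0,1)$ to $(z_1,1)$ (except at its endpoints, which lie on $\T{1}\times\{1\}$) and avoiding both $\T{1}\times\{0\}$ and the interior fixed points. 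Its image $\Phi\circ\tilde{\gamma}$ is a path in $\overline{\D}$ from $z_0$ to $z_1$, contained in $\D\setminus\{0\}$ except at the endpoints, and disjoint from $\fix(f)$. The main obstacle in the argument is precisely this last fixed-point-avoidance step: one must exploit the explicit form of the annulus extension provided by the lemma to localize and control $\fix(\tilde{f})$ well enough to route $\tilde{\gamma}$ around it, so I would handle this by inspecting the Hamiltonian construction rather than treating the lemma as a black box.
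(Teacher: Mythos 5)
Your approach — apply the preceding lemma to obtain an area-preserving annulus diffeomorphism $\tilde f$ interpolating between the identity on $\T{1}\times\{0\}$ and $g$ on $\T{1}\times\{1\}$, then collapse $\T{1}\times\{0\}$ to a point via an area-scaling map — is exactly the route the paper takes; the paper's own proof is in fact terser than yours, stopping after ``collapsing the boundary component'' and saying nothing about either point you flag. Both of your concerns are legitimate and worth recording. The $C^1$-closeness in the statement cannot literally survive the collapse at the origin, since the quotient map is not differentiable there; as you observe, only $C^0$-closeness is ever used (property (d) of Proposition~\ref{propconstructionexample}), and the paper's own proof already quietly drops the ``$C^1$'' when it writes ``$\epsilon$-close to the identity.'' Likewise, the ``Moreover'' clause is only meaningful if the path is additionally required to avoid $\fix(f)$ (this is how it is invoked for property (b-1)), and establishing that does require inspecting the interpolant rather than treating the lemma as a black box.

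One caveat on your sketch: the claim that the interior fixed set of $\tilde f$ ``can be made arbitrarily small by taking $\delta$ small'' is not right in general. If $g$ has a fixed point $p$, the natural interpolants fix a whole segment above $p$, and this set does not shrink with $\delta$. What the argument actually needs is weaker and different: that $\fix(\tilde f)\cap\bigl(\T{1}\times(0,1)\bigr)$, together with the collapsed circle $\T{1}\times\{0\}$, does not separate $(z_0,1)$ from $(z_1,1)$ inside the annulus. This can be arranged by choosing an interpolant with non-trivial vertical displacement off the boundary away from the fixed arcs of $g$ (a Hamiltonian-type construction does allow such control), after which $\Phi$ transports the resulting path into $\D\setminus(\fix(f)\cup\{0\})$ as you describe. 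Neither you nor the paper actually writes this down, but it is the genuine content of the ``Moreover'' clause, and your instinct to open up the construction of $\tilde f$ is the correct way to handle it.
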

\begin{proof}
  If $\fonc{g}{\s^{1}}{\s^{1}}$ is a diffeomorphism which is $\delta$-close to the identity, then applying the above lemma one can conclude that there exists an area-preserving diffeomorphism $f'$ as the above lemma. Furthermore, by performing an arbitrarily small area-preserving perturbation, we may assume that $f'$ has finitely many fixed points in $\T{1}\times(0,1)$. Collapsing the boundary component $\T{1}\times \{0\}$ to a point, we obtain an area-preserving homeomorphism $f$ of the closed unit disk $\overline{\D}$ of $\R^2$ which is $\epsilon$-close to the identity such that $f|_{\partial \overline{\D}}=g$. Since $f$ has finitely many fixed points in the interior of $\overline{\D}$, the existence of the path $\alpha$ follows.
\end{proof}

\begin{proof}[Proof of Proposition \ref{propconstructionexample}]
Our proof is an adaptation of the proof of the last claim from \cite{kota}. Let consider a topological flow $(\phi_t)_{t\in\R}$ on the closed annulus lifting to a flow $(\check{\phi}_t)_{t\in\R}$ of $\R\times [0,1]$ such that:
\begin{itemize}
  \item $\phi_t$ is area-preserving for every $t\in\R$;
  \item the square $\check{D}_0:=(0,1/2)\times (0,1)$ is $\check{\phi}_t$-invariant for every $t\in\R$;
  \item there are finitely many singularities and no essential closed ``connections'';
  \item there are not singularities on the boundary of $\check{D}_0$ but the vertices.
\end{itemize}
We note that by construction  for each $t\in\R$ there is a circle $\check{\phi}_t$-invariant, close enough of the boundary of $\check{D}_0$ such that the restriction of $\check{\phi}_t$ on this circle is transitive. Given an integer $j\geq 1$, let us consider the homeomorphism $H:\R\times [0,1] \to \R\times [0,1]$ defined as  $$ H(\check{x},y):= (\check{x},y)+ ((j+1)\sin (2\pi y),0). $$
We write $T: (\check{x},y)\mapsto (\check{x}+1,y)$. Note that $T H=H T$ and that the projection of the topological disk $\check{D}=H(\check{D}_0)$ onto the first coordinate has diameter greater than $j$. For every $t\in\R$, consider $$  \check{f}_t := H \check{\phi}_t H^{-1}. $$
It is easy to check that $T \check{f}_t=\check{f}_t T$, $\check{f}_t(\check{D})=\check{D}$, and that for every integer $n$,
$\check{f}_t^n=  H\check{\phi}_t^n H^{-1}$. Moreover given a real number $\eta>0$, both sets
$$\inte H^{-1}([0,\eta]\times [0,1])\quad \text{ and } \quad \inte H^{-1}([j,j+\eta]\times [0,1])$$ contain points in the boundary of $\check{D}$. Therefore, for every $t\in\R$ one can find as above a $\check{\phi}_t$-invariant and transitive circle intersecting both sets. This implies that there exists a point $\check{z}$ and an integer $n\geq 1$ such that $$\check{z} \in [0,\eta]\times [0,1]  \quad  \text{ and } \quad \check{f}_t^{n}(\check{z})\in [j,j+\eta]\times [0,1].$$

For $i\in \{-1,0,1\}$, let $A_i=\T{1}\times [i,i+1]$. Given $x_0$, $x'_0$, $x_1$ and $x'_1$ four points in $\T{1}$ let $z_{-1}= (x_0,-1)$, $z'_{-1}= (x'_0,-1)$; $z_{0}= (0,0)$, $z'_{0}= (1/2,0)$; $z_{1}= (0,1)$, $z'_{1}= (1/2,1)$; $z_{2}= (x_1,2)$, $z'_{2}= (x'_1,2)$. For each $i\in \{-1,1\}$ consider two disjoint segments $\alpha_{i}$ and $\alpha'_{i}$ joining $z_i$ to $z_{i+1}$ and $z'_i$ and $z'_{i+1}$ respectively, where both segments are contained in the interior of $A_i$ but for the endpoints.  These two segments divide the interior of the closed annulus $A_i$ in two open topological disks $D_i$ and $D'_i$ whose closure are closed disks. Given a real number $\epsilon>0$, by Corollary \ref{coroconstfinD} applied on each closed disk $\overline{D_i}$, $\overline{D'_i}$, $i\in \{-1,1\}$ we obtain a real number $0<\delta<\epsilon$. Define $f$ on $A_0$ as $f=f_t$, where $t$ is chosen small enough so as to guarantee that $f_t$ is $\delta$-close to the identity in the $C^1$-topology. For each $i\in \{-1,1\}$ let us choose two diffeomorphisms $\fonc{g_{\alpha_i}}{\alpha_i}{\alpha_i}$ and $\fonc{g'_{\alpha_i}}{\alpha'_i}{\alpha'_i}$ without fixed point but the endpoints close enough to the identity such that the diffeomorphisms induced on $\partial D_i$ and $\partial D'_i$ by  $g_0$, $g_1$, $f$, $g_{\alpha_i}$ and $g'_{\alpha_i}$ (and their inverses) are $\delta$-close to the identity. We can apply Corollary \ref{coroconstfinD} on each disk $\overline{D_i}$ and $\overline{D'_i}$ to obtain an area-preserving homeomorphism $f_{i}$ and $f'_i$ of $\overline{D_i}$ and $\overline{D'_i}$ respectively which are $\epsilon$-close to the identity. We consider now the homeomorphism of $\T{1}\times [-1,2]$ which coincides with $f$ on $A_0$ and with $f_i$ and $f'_i$ on $\overline{D_i}$ and $\overline{D'_i}$, $i\in \{-1,1\}$, respectively. Rescaling the annulus vertically we obtain a homeomorphism which satisfies the conditions of Proposition \ref{propconstructionexample}. This completes the proof of the proposition.
\end{proof}


  \textsc{Jonathan Conejeros. Instituto de Mat\'ematica e Estat\'{\i}stica, Universidade de S\~ao Paulo, Rua de Mat\~ao 1010, Cidade Universit\'aria, 05508-090 S\~ao Paulo, SP, Brazil}\\
  \textit{E-mail address:}  \texttt{jonathan@ime.usp.br}\\

  \textsc{F\'abio Armando Tal. Instituto de Mat\'ematica e Estat\'{\i}stica, Universidade de S\~ao Paulo, Rua de Mat\~ao 1010, Cidade Universit\'aria, 05508-090 S\~ao Paulo, SP, Brazil}\\
  \textit{E-mail address:} \texttt{fabiotal@ime.usp.br}

\end{document}